\newtheorem{thm}{Theorem}[section]
\newtheorem{prop}[thm]{Proposition}
\newtheorem{lem}[thm]{Lemma}
\newtheorem{definition}[thm]{Definition}
\newtheorem{rmk}[thm]{Remark}
\newcommand{\R}{\mathbb{R}}
\newcommand{\C}{\mathbb{C}}
\newcommand{\lb}{\langle}
\newcommand{\rb}{\rangle}
\renewcommand{\t}{\widetilde}
\renewcommand{\d}{\,\operatorname{d}\!}
\renewcommand{\Re}{\operatorname{Re}}
\renewcommand{\Im}{\operatorname{Im}}
\renewcommand{\hat}{\widehat}
\title{Well-posedness for the Schr\"odinger-KdV system on the half-line}
\author{E. ~Compaan, W. ~Shin, N.~Tzirakis}
\address{Department of Mathematics, University of Illinois, Urbana, IL 61801, U.S.A.}
\email{e.compaan@gmail.com}
\email{wshin14@illinois.edu}
\email{tzirakis@illinois.edu}
\subjclass[2020]{35Q53, 35Q55}
\date{}
\begin{document}

\begin{abstract} 
In this paper we obtain improved local well--posedness results for the  Schr\"odinger-KdV system on the half-line. We employ the Laplace--Fourier method in conjunction with the restricted norm method of Bourgain appropriately modified in order to accommodate the bounded operators of the half--line problem. Our result extends the previous local results in \cite{CC}, \cite{CC1} and \cite{HY} matching the results that Wu, \cite{Wu}, obtained for the real line system. We also demonstrate the uniqueness for the full range of locally well--posed solutions. In addition we obtain global well--posedness on the half--line for the energy solutions with zero boundary data, along with polynomial--in--time bounds for higher order Sobolev norms for the Schr\"odinger part. 
\end{abstract}

\maketitle

\section{Introduction}
In this paper we study the initial-boundary value problem for the Schr\"odinger--KdV system posed on the positive half line
\begin{align}\label{eq: SKdV}
\begin{cases}
i\partial_{t}u + \partial_{x}^{2}u = \alpha uv + \beta |u|^2u,  \\
\partial_{t}v +  \partial_{x}^{3}v + v \partial_{x}v = \gamma \partial_{x}(|u|^2), \\
u(x,0) = u_0(x), \quad v(x,0) = v_0(x),\\
u(0,t) = f(t), \quad v(0,t) = g(t),
\end{cases} & (x,t) \in \R^+ \times \R^+,
\end{align}
where $u=u(x,t) \in \C$, $v=v(x,t) \in \R$, and $\alpha, \beta, \gamma$ are real constants. 

The system on the real line models the nonlinear interaction of a short wave modeled by the Schr\"odinger part and a long wave modeled by the KdV part. The system arises in several fields of physics and fluid dynamics. The case $\beta \neq 0$ discribes non-resonant interactions while the case $\beta=0$ describes resonant interactions. For a detailed discussion of the applications we cite the papers \cite{Wu}, \cite{CC}, \cite{HY} and the references therein. We should note that although the decoupled equations, namely the cubic nonlinear Schr\"odinger equation and the KdV equations are completely integrable the coupled system is not. In our work we study the system in the half--line interval imposing boundary conditions to the solution operators. From our point of view the model we study corresponds to the case where a wave generator acting as a fixed wave source, produces a disturbance in such a way that a wave motions are generated and then evolve into a medium in only one direction where the motion is unrestricted. 

Our goal is to improve the previous local well--posedness results in \cite{CC}, \cite{CC1} and \cite{HY}, and match the best possible results that are available on the real line, \cite{Wu}, by utilizing recent techniques that have been developed to study nonlinear PDE on semi--infinite domains. Our method is based on the restricted norm method ($X^{s,b}$--method) of Bourgain. Since the dispersive effects of the system on the half--line are weaker than the full real line system, one has to set up a modification of the $X^{s,b}$ method to accomodate the presence of the bounded operators. Our method is also inspired by previous developments in IBVP by Bona and his collaborators \cite{bsz},\cite{bsz2}, Kenig and Colliander in \cite{CK}, and Holmer \cite{Holm}, \cite{Holm1}. Our approach is most closely related to those in \cite{bsz} and \cite{CK}. Regarding the Schr\"odinger--KdV system, the first attempt to obtain a well--posedness theory for the half--line in \cite{CC} used the Colliander--Kenig method. The method is based on writting an implicit solution--formula with certain terms enforcing the boundary conditions. The drawback of the method is that uniqueness cannot be proved with the tools in use. The method was developed further in the Ph.D. thesis of Holmer in \cite{Holm}, \cite{Holm1}. For more applications of this method on certain initial--boundary value problems the reader can consult \cite{cm}. 

Another method that has been employed for the resolution of IBVP is the unified transform method of Fokas, introduced in \cite{fokas}. See also \cite{fhm} and the references therein for more recent examples and especially for applications of the method to nonlinear problems. The method is quite general and works for a variety of linear and nonlinear problems that are posed on complicated domains. This is the method the authors employ in \cite{HY}.

In the present manuscript we improve the well--posedness theory for the semi--infinite interval for $L^2$ based solutions by employing multilinear harmonic analysis techniques. We use the methods developed in \cite{bsz} and \cite{ET} where the authors extended the initial data of certain dispersive PDE posed on $\R^+$ to the whole real line and then employed the Laplace and Fourier transform method to solve the linear problem in the usual Sobolev spaces. After this an equivalent integral representation (Duhamel's formula) of the solution is written using the trace of the linear boundary operator and the nonlinearity. Trace theorems dictate the regularity of the boundary functions, while the nonlinearity of each particular PDE dictates the Banach space in which a fixed point of the integral equation is sought. Our method has the advantage of establishing that the nonlinear evolution is smoother than the initial data. These smoothing estimates enable us to conclude that our solutions are unique in the sense that they are independent of a particular extension of the initial data. 
 We should note that for IBVP, uniqueness of solutions is not straightforward at all, especially in a low regularity regime where there are no conservation laws or \emph{a priori} bounds that are derived from the differential form of the conserved quantities. In this paper by proving a priori bounds for smooth solutions we obtain uniqueness for higher regularities. Persistence of regularity estimates then allows us to obtain uniqueness for rougher solutions. Persistence of regularity for the system (\ref{eq: SKdV}) can be obtained after one has established certain smoothing estimates. We discuss this issue in Section \ref{section: persistence}.

In the context of our work, well--posedness means existence and uniqueness of strong solutions \cite{tc} along with continuity of the data to solution map, at least locally in time (for a more precise definition see Section \ref{section: notion of a sol}). With the notations defined in Section \ref{section: notation}, we now state our main results..

\begin{definition}
We say that $(s,k)$ is admissible if 
\begin{align*}
\begin{array}{lr}
        0\leq s<\frac{5}{2}, \; s \neq \frac12 \; & \text{if } \beta \neq 0,\\
        -\frac{3}{16}<s<\frac{5}{2}, \; s \neq \frac12 \; & \text{if } \beta = 0,
\end{array}
\end{align*}
$-\frac{3}{4}<k<\frac{7}{2}$, $k \neq \frac{1}{2}$ and $s-2<k<\min(4s,s+1)$. 
\end{definition}

\begin{thm}\label{LWP}
For an admissible pair $(s,k)$, the equation (\ref{eq: SKdV}) is locally well-posed in $H^{s}(\R^{+}) \times H^{k}(\R^{+})$ in the sense of Definition \ref{def: LWP}.
Moreover, for any
\begin{align*}
\begin{array}{lr}
        0\leq a <\frac12\min(k-s+2, k+1, 4s, 5-2s, 1), & \text{if } \beta \neq 0,\\
        0\leq a <\frac12\min(k-s+2, k+1, 5-2s, 1), & \text{if } \beta = 0 \text{ and } s \geq 0, \\
        0\leq a <\frac12\min(s+k+\frac32, k+1, 5-2s, 1), & \text{if } \beta = 0 \text{ and } s < 0,
\end{array}
\end{align*}
we have the smoothing for the Schr\"odinger part
\begin{align} \label{smoothing}
u-S_{0}^{t}(u_{0},f) \in C_{t}H^{s+a}_{x}([0,T_{max}) \times \R^{+}),
\end{align}
where $T_{max}$ is the lifetime of $(u,v)$.
\end{thm}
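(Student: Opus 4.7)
The plan is to follow the half-line restricted-norm method developed in \cite{bsz} and \cite{ET}. First I would Sobolev-extend $(u_0,v_0) \in H^s(\R^+) \times H^k(\R^+)$ to functions on $\R$ with comparable norms, then express the solution via a Duhamel-type formula of the schematic form
\begin{equation*}
u(t) = \eta(t)\Bigl[e^{it\partial_x^2} u_0^* - i\int_0^t e^{i(t-t')\partial_x^2} F_1(u,v)\,dt' + \W_1^t(f - h_1)\Bigr],
\end{equation*}
and analogously for $v$, where $\W_1^t$ is the boundary-forcing operator for the linear Schr\"odinger half-line problem, $h_1$ is the $x=0$ trace of the initial and Duhamel terms, $F_1 = \alpha uv + \beta |u|^2 u$, and $\eta$ is a smooth temporal cutoff. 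This packages the initial and boundary conditions of \eqref{eq: SKdV} into a single fixed-point equation on $\R$.

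Next, I would establish the required linear and multilinear estimates. On the linear side one needs $X^{s,b}$ and $Y^{k,b}$ bounds on the free evolutions, on the boundary operators, and on the Duhamel integrals, together with Kato-type mixed-norm bounds that place the boundary trace of $u$ at the Sobolev regularity $H^{(2s+1)/4}$ in $t$ and the traces of $v,\p_x v,\p_x^2 v$ at the appropriate $H^{(k+1)/3}, H^{k/3}, H^{(k-1)/3}$ scales, matching the natural trace theorems. The analytic heart of the proof is the set of bilinear/trilinear estimates on $\R$: the coupling bound $\|uv\|_{X^{s,b-1}} \lesssim \|u\|_{X^{s,b}}\|v\|_{Y^{k,b}}$; the cubic Schr\"odinger bound $\||u|^2 u\|_{X^{s,b-1}} \lesssim \|u\|_{X^{s,b}}^3$, needed only when $\beta \neq 0$; and the KdV derivative-nonlinearity bound $\|\p_x(|u|^2)\|_{Y^{k,b-1}} \lesssim \|u\|_{X^{s,b}}^2$. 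Each of these should be proved in a sharpened form that gains $a$ derivatives, from which \eqref{smoothing} follows. The admissibility constraints $s-2 < k < \min(4s,s+1)$, $k>-3/4$, and $s \geq 0$ (resp.\ $s > -3/16$ when $\beta=0$) are expected to emerge precisely as the boundary of validity of these estimates, matching Wu's real-line analysis \cite{Wu}.

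With these estimates in hand, a standard contraction argument on a small ball in $X^{s,b} \times Y^{k,b}$ restricted to $[0,T]$ yields local existence, uniqueness within that Bourgain space, and continuous dependence on the data, which is well-posedness in the sense of Definition \ref{def: LWP}. The smoothing estimate \eqref{smoothing} then elevates uniqueness to the full class of admissible data: since the nonlinear Duhamel contribution lies in $H^{s+a}$ with $a>0$, its restriction to $\R^+$ is independent of the chosen Sobolev extension, per the persistence-of-regularity argument outlined in Section \ref{section: persistence}. The main obstacle I anticipate is the sharp coupling estimate with smoothing gain in the low-regularity regime: when $s$ is near $0$ (or near $-3/16$ when $\beta=0$) and $k$ is near $s-2$, the near-resonance set where $|\tau+\xi^2|+|\tau_1-\xi_1^3|$ is small relative to the frequency scale is largest, and the dispersive gain minimal, so obtaining both the full admissible range and a positive smoothing exponent $a$ will require a careful dyadic case analysis that exploits the interaction of the Schr\"odinger and Airy dispersion relations.
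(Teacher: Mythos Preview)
Your outline follows the same overall strategy as the paper: extend to the full line, write a boundary-forced Duhamel formula, prove linear and multilinear estimates, and close by contraction. However, there are two genuine technical gaps that are not mere details.

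First, you write the nonlinear estimates with target space $X^{s,b-1}$, which signals the standard $b>\tfrac12$ framework. On the half-line the boundary forcing operators $S_0^t(0,\cdot)$ and $W_0^t(0,\cdot)$ are only bounded into $X^{s,b}$ and $\widetilde X^{k,b}$ for $b<\tfrac12$ (Lemmas~\ref{lemma: S_0^t(0,f) estimate} and \ref{lemma: V_0^t(0,g) estimate}), so you are forced to take $b<\tfrac12$ throughout. This has two consequences you do not address: (i) the KdV bilinear estimate $\|\partial_x(v_1 v_2)\|_{\widetilde X^{k,-b}}\lesssim \|v_1\|_{\widetilde X^{k,b}}\|v_2\|_{\widetilde X^{k,b}}$ \emph{fails} for every $b<\tfrac12$, so you must work in the Colliander--Kenig modified space $\widetilde X^{k,b}_\alpha=\widetilde X^{k,b}\cap D_\alpha$ (you also omit the $vv_x$ estimate from your list entirely); and (ii) $X^{s,b}$ does not embed into $C_t H^s_x$ when $b<\tfrac12$, so you need the $Y^{s,b}$ and $\widetilde Y^{k,b}_\alpha$ spaces with the extra $L^2_\xi L^1_\tau$ component. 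In the same vein, the Kato-smoothing bounds on the Duhamel term in $C_x H^{(2s+1)/4}_t$ and $C_x H^{(k+1)/3}_t$ require, for $s>\tfrac12$ and $k>\tfrac12$, additional ``correction'' norms (Lemmas~\ref{lemma: NLS Correction}, \ref{lemma: KdV Correction}) that must be separately estimated for each nonlinearity (Propositions~\ref{estimate:uv correction}--\ref{estimate:|u|^2_x correction}); your proposal does not account for these.

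Second, your uniqueness sketch is incomplete. Smoothing alone does not show that the solution is independent of the extension of $(u_0,v_0)$: the paper's argument (Section~\ref{section: uniqueness}) combines persistence of regularity with an \emph{a priori} energy estimate at regularity $(s_0,k_0)$ with $s_0>\tfrac32$, $k_0>\tfrac52$ (Section~\ref{section: high regularity uniqueness}), then approximates rough data by smooth data whose solutions are unique by Gronwall. The smoothing estimate is used to establish persistence of regularity, not uniqueness directly.
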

For $s<\frac52$, $k <\frac72$ and $s, k \neq \frac{1}{2}$, our well-posedness region matches that of \cite{Wu} up to the boundary, which is so far the best known result. We do not expect nonlinear smoothing for the KdV part, as it is known, \cite{IMT}, that the nonlinear evolution of KdV equation on $\R$ is not smoother than the initial data. 

\begin{thm}\label{Global}
    Let $(s,k)$ be an admissible pair satisfying $s \geq 1$ and $k \geq 1$. Given $(u_0,f,v_0,g) \in \mathcal{H}^{s,k}$, the solution $(u,v)$ to the system (\ref{eq: SKdV}) has a global-in-time extension provided $\alpha \gamma >0$, $f=0$ and $g=0$. Moreover, $\|u\|_{H^s(\R^+)}$ can grow at most polynomially-in-time.
\end{thm}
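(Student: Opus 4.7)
The plan has three stages: first derive an a priori $H^1(\R^+)\times H^1(\R^+)$ bound on smooth solutions of \eqref{eq: SKdV} with zero boundary data, then promote this to global existence at $(s,k)=(1,1)$ via Theorem \ref{LWP}, and finally extend to arbitrary admissible $(s,k)$ with $s,k\geq 1$ using persistence of regularity. The polynomial-in-time bound on $\|u\|_{H^s}$ is extracted by a Bourgain-type iteration built on the smoothing estimate \eqref{smoothing}.

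The $H^1$ a priori bound comes from differentiating $\|u\|_{L^2(\R^+)}^2$, $\|v\|_{L^2(\R^+)}^2$, and the half-line Hamiltonian
\begin{equation*}
E(u,v)=\int_{0}^{\infty}\Big[|u_x|^2+\tfrac{\alpha}{2\gamma}(v_x)^2+\alpha\,v|u|^2-\tfrac{\beta}{2}|u|^4-\tfrac{\alpha}{6\gamma}v^3\Big]\,dx.
\end{equation*}
With $u(0,t)=v(0,t)=0$ every boundary contribution produced by the integrations by parts either vanishes identically or equals $-(\partial_x v(0,t))^2$, which has a favorable sign. Consequently $\|u\|_{L^2}^2$ is conserved, a Gronwall argument on the cross term $\int \partial_x v\,|u|^2$ in the evolution of $\|v\|_{L^2}^2$ yields a time-uniform $L^2$-bound on $v$, and the hypothesis $\alpha\gamma>0$ makes the quadratic part of $E$ positive definite. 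The cubic and quartic terms in $E$ are then absorbed through Gagliardo--Nirenberg interpolation using the controlled $L^2$-norms, producing a time-uniform bound on $\|u(t)\|_{H^1}+\|v(t)\|_{H^1}$. Combined with Theorem \ref{LWP} at $(s,k)=(1,1)$, whose lifetime depends only on the $H^1\times H^1$-norm, this forces $T_{max}=\infty$.

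For higher admissible $(s,k)$, persistence of regularity --- which the introduction describes as a direct consequence of the smoothing estimates --- propagates the extra regularity for as long as the $H^1\times H^1$-norm remains bounded, hence globally.

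Finally, for the polynomial-in-time bound I iterate Theorem \ref{LWP} on consecutive intervals $[t_n,t_{n+1}]$ of uniform length $\delta=\delta(\|u\|_{H^1},\|v\|_{H^1})$. On each interval I split $u=S_0^t(u(t_n),0)+w$; the linear half-line Schr\"odinger evolution with zero boundary data preserves the $H^s(\R^+)$-norm (via the odd extension compatible with $u(0,t)=0$), so
\begin{equation*}
\|u(t_{n+1})\|_{H^s}^2-\|u(t_n)\|_{H^s}^2=2\,\Re\,\lb S_0^\delta u(t_n),w\rb_{H^s}+\|w\|_{H^s}^2.
\end{equation*}
Interpolating $\|w\|_{H^s}$ between $\|w\|_{H^{s+a}}$ (controlled by \eqref{smoothing}) and a lower Sobolev index dominated by the globally bounded $H^1$-norm converts the right-hand side into $C\,\|u(t_n)\|_{H^s}^{\theta}$ with $\theta<2$; summing over $O(T)$ intervals gives the advertised bound $\|u(T)\|_{H^s}\lesssim(1+T)^{N(s,a)}$. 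The main obstacle throughout is the meticulous accounting of boundary terms in the energy identity: it is precisely $f=g=0$ together with $\alpha\gamma>0$ that turns every surviving boundary contribution into either zero or a dissipative term, and without this combination the $H^1$-coercivity underpinning the entire argument would fail.
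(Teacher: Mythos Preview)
Your proof is correct and mirrors the paper's own argument: the $H^1\times H^1$ a priori bound under $\alpha\gamma>0$ and $f=g=0$ (which the paper simply cites from \cite{CC1}), persistence of regularity via Proposition~\ref{prop: persistence} to reach higher admissible $(s,k)$, and the smoothing-based Bourgain iteration for the polynomial bound on $\|u\|_{H^s}$ (which the paper defers to \cite{ET}). One small caution: the $L^2$-control on $v$ is not a standalone Gronwall step as you phrase it, since the cross term $\int_0^\infty v_x\,|u|^2\,dx$ already requires $\|u\|_{H^1}$; in the actual argument the $L^2$ and $H^1$ bounds are closed simultaneously through the coupled conserved/almost-conserved quantities rather than sequentially.
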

In \cite{CC1}, the authors proved global well-posedness for (\ref{eq: SKdV}) with zero boundary condition in the energy space $(s,k)=(1,1)$. We extend this result for all $s \geq 1$ and $k \geq 1$ using persistence of regularity. We note this and the polynomial growth result are all based on smoothing estimates.

Now we outline the organization of the paper. In Section \ref{section: notation}, we define several function spaces to be used in this paper and briefly discuss their basic properties. In Section \ref{section: notion of a sol}, we define the notion of a solution by reformulating the equation (\ref{eq: SKdV}) in terms of Duhamel's formula involving boundary terms. In Section \ref{section: A priori estimates}, we state several linear and nonlinear estimates needed to close the contraction argument. In Section \ref{section: Main results} we present the proofs of our main results. First, we show the existence of local solutions via Banach's fixed point theorem. Then we utilize the smoothing estimate demonstrated in Section \ref{section: A priori estimates} to prove the persistence of regularity property of solutions. Using this, we show the uniqueness of low-regularity solutions. Finally, we discuss nonlinear smoothing and the growth of the Sobolev norm of solutions. In Section \ref{section: high regularity uniqueness}, we prove the uniqueness of high-regularity solutions by Gronwall's inequality. Finally, we present the proofs of key multilinear estimates in Section \ref{section: proofs of estimates}. A standard Lemma that is used throughout the paper is stated separately in the Appendix.

\section{Notation \& Function spaces} \label{section: notation}

\subsection{Notation}

Define the one-dimensionl Fourier transform as
\begin{align*}
    \hat{f}(\xi)=\mathcal{F}f(\xi)=\int_{\R}e^{-ix\xi}f(x)dx.
\end{align*} 
If $u(x,t)$ is a function on $\R \times \R$ we define its space-time Fourier transform $\hat{u}(\xi,\tau)$ by
\begin{align*}
    \hat{u}(\xi,\tau)=\mathcal{F}u(\xi,\tau)=\int_{\R \times \R}e^{-i(x\xi+t\tau)}u(x,t)dx dt.
\end{align*} 
Let $S_{\R}^{t}u_0$ denote the solution to the linear Schr\"odinger equation on $\R$, i.e. $S_{\R}^{t}u_0$ solves
\[ \begin{cases} i\partial_{t}u + \partial_{x}^{2}u = 0, \quad (x,t) \in \R\times \R^+, \\
                 u(x,0) = u_0(x). \end{cases} \]
For $\R^{+}=(0,\infty)$, let $S^t_0(u_0,f)$ denote the solution to the corresponding linear initial-boundary value problem (IBVP) on $\R^+$:
\[
\begin{cases}
i\partial_{t}u + \partial_{x}^{2}u= 0, \quad (x,t) \in \R^+ \times \R^+, \\
u(x,0) = u_0(x), \quad u(0,t) = f(t). 
\end{cases}
\]
Similarly, let $W_{\R}^{t} v_0$ and $W^t_0(v_0,g)$ denote the solutions to the linear Airy equation on $\R$ and the linear Airy IBVP on $\R^+$,
\[
\begin{cases} 
\partial_{t}v +  \partial_{x}^{3}v = 0, \quad (x,t) \in \R \times \R^+, \\
v(x,0) = v_0(x), \end{cases}
 \text{and} \quad
\begin{cases}
\partial_{t}v +  \partial_{x}^{3}v = 0, \quad (x,t) \in \R^+ \times \R^+, \\
v(x,0) = v_0(x), \quad v(0,t) = g(t), \end{cases}\]
respectively. 

For a space-time function $f$, we denote 
\begin{align*}
    D_{0}f(t)=f(0,t).
\end{align*}
We denote by $\chi := \chi_{(0,\infty)}$ the indicator function of $(0,\infty)$. Let $\lb \xi \rb :=(1+|\xi|^2)^{1/2}$.

\subsection{Function spaces}
Define the Sobolev space $H^{s}(\R)$ by the norm 
\begin{align*}
    \Vert f \Vert_{H^{s}(\R)}=\left (\int_{\R}\lb \xi \rb^{2s}|\hat{f}(\xi)|^{2} d\xi \right )^{\frac{1}{2}}
\end{align*}
and its restriction on $\R^{+}=(0,\infty)$ by the norm
\begin{align*}
    \Vert f \Vert_{H^{s}(\R^{+})}=\inf\{\Vert g \Vert_{H^{s}(\R)}: g = f \text{ on } \R^{+}\}.
\end{align*}
Define the Schr\"odinger Fourier restriction space $X^{s,b}$ and the KdV Fourier restriction space $\t X^{k,b}$ by the following norms:
\begin{align*}
\| u \|_{X^{s,b}} = \left\|\lb \xi \rb^{s} \lb \tau + \xi^2 \rb^b \hat{u}(\xi, \tau) \right\|_{L^2_{\xi,\tau}},\\
\| v \|_{\t X^{k,b}} = \left\|\lb \xi \rb^{k} \lb \tau - \xi^3 \rb^b \hat{v}(\xi, \tau) \right\|_{L^2_{\xi,\tau}}.
\end{align*}
 To handle the boundary terms, we are forced to choose $b<\frac12$. On the other hand, it is known that the key bilinear estimate $\|\partial_{x}(v_1v_2)\|_{\t X^{k,b}} \lesssim \| v_1\|_{\t X^{k,b}}\| v_2\|_{\t X^{k,b}}$ fails for all $b<\frac12$. To rectify this, we need the low-frequency modification space $D_\alpha$ introduced in \cite{CK} defined by the norm
\[ \| v \|_{D_\alpha} = \| \chi_{[-1,1]}(\xi) \lb \tau \rb^\alpha \hat{v}(\xi,\tau)\|_{L^2_{\xi,\tau}}.\]
We shall write $\t X^{k,b}_\alpha$ for $\t X^{k,b}\cap D_\alpha$. Note that
\begin{align*} \| v \|_{\t X^{k,b}_\alpha} &\approx \left\| \left(\lb \xi \rb^{k} \lb \tau - \xi^3 \rb^b + \chi_{|\xi| < 1} \lb \tau \rb^\alpha \right) \hat{v}(\xi,\tau) \right\|_{L^2_{\xi,\tau}}\\
&\approx \left\| \lb \xi \rb^{k} \left(\lb \tau - \xi^3 \rb^b + \chi_{|\xi|<1} \lb \tau \rb^\alpha \right) \hat{v}(\xi,\tau)\right\|_{L^2_{\xi,\tau}}.
\end{align*}
 For our persistence of regularity argument, we need our resolution space to be embedded in $C_tH^s_x \times C_tH^k_x$. However, since the space $X^{s,b} \times \t X^{k,b}_\alpha$ does not embed in $C_tH^s_x \times C_tH^k_x$ for any $b<\frac12$, we need the modified spaces $Y^{s,b}$ and $\t Y^{k,b}_{\alpha}$ as introduced in \cite{Bourgain}, defined by the following norms:
\begin{align*}
\| u \|_{Y^{s,b}} =\| u \|_{X^{s,b}}+\|\lb \xi \rb^{s}\hat{u}(\xi, \tau) \|_{L^2_{\xi}L^{1}_{\tau}}, \\
\| u \|_{\t Y^{k,b}_{\alpha}} =\| u \|_{\t X^{k,b}_{\alpha}}+\|\lb \xi \rb^{k}\hat{u}(\xi, \tau) \|_{L^2_{\xi}L^{1}_{\tau}}.
\end{align*} 
For any time interval $I$, define the time-localized space $Y^{s,b}(I)$ by the norm
\begin{align*}
    \| u \|_{Y^{s,b}(I)}=\inf\{\| v \|_{Y^{s,b}}:v=u \text{ on } \R \times I \}.
\end{align*}
We analogously define the space $\t Y^{k,b}_{\alpha}(I)$. Finally, $\mathcal{H}^{s,k}$ will denote the set of all elements $(u_0,f,v_0,g) \in H^{s}(\R^+) \times H^{\frac{2s+1}{4}}(\R^+) \times H^{k}(\R^+) \times H^{\frac{k+1}{3}}(\R^+)$ satisfying the additional compatibility conditions $u_{0}(0) = f(0)$ for $s > \frac{1}{2}$ and $v_{0}(0) = g(0)$ for $k > \frac{1}{2}$.

We collect some basic properties of the above spaces that will be used later. 
\begin{lem} [{\cite{Bourgain}}] \label{embedding}
    For any $s, k \in \R$, we have inclusions $Y^{s,b}(I)\hookrightarrow C_{t}H^{s}_{x}(I \times \R)$ and $\t Y^{k,b}_{\alpha}(I)\hookrightarrow C_{t}H^{k}_{x}(I \times \R)$ with inequalities
    \begin{align*}
        \| u \|_{C_{t}H^{s}_{x}(I \times \R)} \leq \| u \|_{Y^{s,b}(I)} \quad \text{and} \quad \| u \|_{C_{t}H^{k}_{x}(I \times \R)} \leq \| u \|_{\t Y^{k,b}_{\alpha}(I)}.
    \end{align*}
\end{lem}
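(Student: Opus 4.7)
The plan is to establish the non-localized embeddings $Y^{s,b} \hookrightarrow C_tH^s_x(\R \times \R)$ and $\t Y^{k,b}_\alpha \hookrightarrow C_tH^k_x(\R \times \R)$; the time-localized statements then follow by taking the infimum over extensions in the definitions of $Y^{s,b}(I)$ and $\t Y^{k,b}_\alpha(I)$. The key observation is that the $L^2_\xi L^1_\tau$ summand, which was added to the $X^{s,b}$ norm precisely to compensate for the failure of $X^{s,b}\hookrightarrow C_tH^s_x$ when $b<\frac12$, carries all the information needed for the embedding.

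Given $u \in Y^{s,b}$, I would set $v(\xi,t) := \frac{1}{2\pi}\int_\R e^{it\tau}\hat{u}(\xi,\tau)\,d\tau$, so that by Fourier inversion in $\tau$ one has $v(\xi,t) = \mathcal{F}_x[u(\cdot,t)](\xi)$. The trivial bound $|v(\xi,t)| \le \tfrac{1}{2\pi}\|\hat{u}(\xi,\cdot)\|_{L^1_\tau}$, combined with the definition of the $H^s$ norm, yields the uniform estimate
\[ \|u(\cdot,t)\|_{H^s_x} \lesssim \|\lb\xi\rb^s \hat u(\xi,\tau)\|_{L^2_\xi L^1_\tau} \le \|u\|_{Y^{s,b}}, \]
which already gives $u \in L^\infty_t H^s_x$ with the stated quantitative bound.

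For the continuity claim I would fix $t_0 \in \R$ and examine
\[ \|u(\cdot,t)-u(\cdot,t_0)\|_{H^s_x}^2 \lesssim \int_\R \lb\xi\rb^{2s}\left|\int_\R \bigl(e^{it\tau}-e^{it_0\tau}\bigr)\hat u(\xi,\tau)\, d\tau\right|^2 d\xi, \]
then apply dominated convergence twice: first in $\tau$ (for a.e.\ $\xi$, those for which $\hat u(\xi,\cdot) \in L^1_\tau$) with pointwise dominator $|e^{it\tau}-e^{it_0\tau}|\le 2$, to conclude that the inner integral tends to $0$ pointwise in $\xi$ as $t\to t_0$; and then in $\xi$, with envelope $4\lb\xi\rb^{2s}\|\hat u(\xi,\cdot)\|_{L^1_\tau}^2$, which is integrable since $\lb\xi\rb^s \hat u \in L^2_\xi L^1_\tau$. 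This gives the continuity of $t\mapsto u(\cdot,t)$ as a map into $H^s_x$.

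The argument for $\t Y^{k,b}_\alpha$ is identical: the low-frequency correction $D_\alpha$ plays no role, because the controlling term is again the $L^2_\xi L^1_\tau$ summand in the definition of the $\t Y^{k,b}_\alpha$ norm, which directly dominates $\|\lb\xi\rb^k \hat v\|_{L^2_\xi L^1_\tau}$. There is no genuine obstacle here; the whole point of introducing the auxiliary $L^2_\xi L^1_\tau$ norm, following Bourgain, is to reduce the embedding to this one-line application of Fourier inversion plus dominated convergence. The only minor bookkeeping is to note that if $\tilde u$ extends $u$ off the time interval $I$ with $\tilde u \in Y^{s,b}$, then continuity of $t \mapsto \tilde u(\cdot,t)$ on $\R$ restricts to continuity on $I$, after which taking the infimum in the definition of $\|\cdot\|_{Y^{s,b}(I)}$ delivers the stated inequality on $I$.
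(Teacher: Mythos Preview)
Your argument is correct and is precisely the standard one: the paper itself gives no proof but simply cites \cite{Bourgain}, and what you have written is exactly the Fourier-inversion-plus-dominated-convergence argument that underlies that citation. There is nothing to add.
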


\begin{lem}[{\cite[Lemma 2.11]{Tao}}]\label{lemma: X^s,b scaling}
For any $T>0$ and $-\frac{1}{2}<b_1<b_2<\frac{1}{2}$, we have
    \begin{align*}
    \|\eta(t/T)u\|_{X^{s,b_1}} \lesssim T^{b_{2}-b_{1}}\|\eta(t/T)u\|_{X^{s,b_2}}
    \end{align*}
    and
    \begin{align*}
        \|\eta(t/T)u\|_{\t X^{k,b_1}} \lesssim T^{b_{2}-b_{1}}\|\eta(t/T)u\|_{\t X^{k,b_2}}.
    \end{align*}
\end{lem}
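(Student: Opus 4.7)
My plan is to reduce the bound, via a gauge transformation, to a purely one-dimensional temporal inequality, which I then prove by scaling. For the first estimate, set $f = \eta(t/T)u$ and define the gauge-transformed function $v$ by $\hat v(\xi,\tau) = \hat f(\xi,\tau-\xi^2)$, equivalently
$$v(x,t) = \mathcal F_\xi^{-1}\!\bigl[e^{-it\xi^2}\,\mathcal F_x f(\xi,t)\bigr](x).$$
This is a unitary multiplier in $x$ applied pointwise in $t$, so the temporal support of $v$ equals that of $f$, hence is contained in $[-CT,CT]$ for some $C=C(\supp\eta)$. A direct change of variables gives $\|f\|_{X^{s,b}} = \|\lb\xi\rb^s\lb\tau\rb^b \hat v\|_{L^2_{\xi,\tau}}$, and Plancherel in $\xi$ reduces matters to showing: for any $w\in\mathcal S'(\R)$ with $\supp w\subseteq[-CT,CT]$ and $-\tfrac12<b_1<b_2<\tfrac12$,
$$\|w\|_{H^{b_1}(\R)} \lesssim T^{b_2-b_1}\|w\|_{H^{b_2}(\R)},$$
with constant independent of $w$ and $T$. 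The $\t X^{k,b}$ version follows identically, replacing the Schr\"odinger phase $e^{-it\xi^2}$ with the Airy phase $e^{-it\xi^3}$.

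For the 1D claim, when $T \lesssim 1$ the inequality is immediate from the embedding $H^{b_2}\hookrightarrow H^{b_1}$. For $T\geq 1$, I would dilate: setting $\t w(t) = w(Tt)$, which is supported in the fixed compact set $[-C,C]$, a Plancherel computation gives
$$\|w\|_{H^b(\R)}^2 = T\int_\R \bigl(1+\sigma^2/T^2\bigr)^b\,|\hat{\t w}(\sigma)|^2\,d\sigma.$$
I would then split the integration into $|\sigma|\leq T$ (where the weight is $\approx 1$) and $|\sigma|\geq T$ (where it is $\approx T^{-2b}\lb\sigma\rb^{2b}$). Because $\t w$ has fixed compact support and $|b|<\tfrac12$, multiplication by a $C^\infty_c$ cutoff equal to one on $[-C,C]$ is bounded on $H^b(\R)$, which allows me to compare $\|\t w\|_{H^{b_1}}$ and $\|\t w\|_{H^{b_2}}$ up to a constant depending only on $C$. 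Matching the powers of $T$ arising from the two frequency regimes produces the factor $T^{b_2-b_1}$ after unscaling.

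The main technical input is the boundedness of multiplication by a smooth compactly supported cutoff on $H^b(\R)$ for $-\tfrac12<b<\tfrac12$; this is what allows the clean reduction to a fixed compact support. Such boundedness fails at $|b|=\tfrac12$, which is consistent with (and essentially the same obstruction as) the forced restriction $b<\tfrac12$ already flagged in the discussion of the Bourgain spaces. The only additional bookkeeping is a careful tracking of low- versus high-frequency contributions when $b_1$ and $b_2$ have opposite signs, but this requires no new ideas.
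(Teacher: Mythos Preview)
The paper does not prove this lemma; it is quoted with a citation to \cite[Lemma 2.11]{Tao}. Your reduction via the free-propagator gauge to a purely temporal estimate is correct and is exactly the standard route taken in \cite{Tao}. The problem is in the treatment of the resulting one-dimensional inequality.

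Your case split is inverted. For $T\gtrsim 1$ one has $T^{b_2-b_1}\gtrsim 1$, so the bound is immediate from the trivial embedding $H^{b_2}\hookrightarrow H^{b_1}$. The substantive case is $T\ll 1$, which is precisely the regime exploited in Section~\ref{section: existence} to produce the small factor $T^{b-b'}$ in the contraction. For $T\le 1$ the embedding gives only $\|w\|_{H^{b_1}}\le\|w\|_{H^{b_2}}$, with no factor of $T^{b_2-b_1}<1$, so that case is not ``immediate'' at all.

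The scaling argument you sketch does not close either. The claim that boundedness of multiplication by a $C^\infty_c$ cutoff on $H^b$ ``allows [you] to compare $\|\t w\|_{H^{b_1}}$ and $\|\t w\|_{H^{b_2}}$ up to a constant depending only on $C$'' is false. Consider $\t w(t)=e^{iNt}\phi(t)$ with a fixed bump $\phi$ supported in $[-C,C]$: then $\|\t w\|_{H^{b}}\approx N^{b}$, so the ratio $\|\t w\|_{H^{b_1}}/\|\t w\|_{H^{b_2}}\approx N^{b_1-b_2}$ is not bounded below. Boundedness of multiplication by $\phi$ on a \emph{fixed} $H^b$ says nothing about comparing two different Sobolev indices. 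What actually drives the gain for small $T$ is the convolution structure: writing $w=\psi(\cdot/T)\,w$ for a wider cutoff, one has $\hat w = T\hat\psi(T\,\cdot)*\hat w$, and a Schur-type estimate on the kernel $T|\hat\psi(T(\tau-\sigma))|\lb\tau\rb^{b_1}\lb\sigma\rb^{-b_2}$ (this is where $|b_1|,|b_2|<\tfrac12$ is used) yields the factor $T^{b_2-b_1}$.
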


\begin{lem} \label{lemma: concatenation}
Let $I_1$ and $I_2$ be nonempty open intervals such that $I_1 \cap I_2 \neq \varnothing$. Then for any $s,k,b,\alpha \in \R$, we have
\begin{align*}
   \| u \|_{Y^{s,b}(I_1 \cup I_2)} \lesssim \| u \|_{Y^{s,b}(I_1)}+\| u \|_{Y^{s,b}(I_2)}
\end{align*}
and
\begin{align*}
   \| u \|_{\t Y^{k,b}_{\alpha}(I_1 \cup I_2)} \lesssim \| u \|_{\t Y^{k,b}_{\alpha}(I_1)}+\| u \|_{\t Y^{k,b}_{\alpha}(I_2)}.
\end{align*}
\end{lem}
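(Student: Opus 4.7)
The plan is to glue near-optimal extensions of $u$ on each $I_j$ via a smooth time-cutoff whose transition region sits inside $I_1\cap I_2$. First, given $\varepsilon>0$, by the definition of the restricted norm I pick $u_j\in Y^{s,b}$ with $u_j=u$ on $\R\times I_j$ and $\|u_j\|_{Y^{s,b}}\leq(1+\varepsilon)\|u\|_{Y^{s,b}(I_j)}$ for $j=1,2$. If $I_1\subset I_2$ or $I_2\subset I_1$ the conclusion is trivial, so assume the generic overlapping case $I_j=(a_j,b_j)$ with $a_1\leq a_2<b_1\leq b_2$. Fix $\delta\in(0,(b_1-a_2)/2)$ and choose a smooth $\psi\colon\R\to[0,1]$ with $\psi\equiv 1$ on $(-\infty,a_2+\delta]$ and $\psi\equiv 0$ on $[b_1-\delta,\infty)$, with uniformly bounded derivatives. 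Set $w:=\psi\, u_1 + (1-\psi)\, u_2$. A case analysis on $I_1\setminus I_2=(a_1,a_2]$, $I_1\cap I_2=(a_2,b_1)$, and $I_2\setminus I_1=[b_1,b_2)$ -- using $u_1=u=u_2$ on the overlap -- shows $w=u$ on $\R\times(I_1\cup I_2)$, so $w$ is an admissible extension and
\[ \|u\|_{Y^{s,b}(I_1\cup I_2)} \leq \|\psi u_1\|_{Y^{s,b}} + \|(1-\psi) u_2\|_{Y^{s,b}}. \]

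Next I would reduce to the time-multiplier bound $\|\psi h\|_{Y^{s,b}}\lesssim_\psi\|h\|_{Y^{s,b}}$, and analogously for $1-\psi$. For the $X^{s,b}$ component, writing $\widehat{\psi h}=\hat\psi *_\tau \hat h$ and combining the elementary weight inequality $\lb\tau+\xi^2\rb^b\lesssim\lb\tau-\tau'\rb^{|b|}\lb\tau'+\xi^2\rb^b$ with Young's inequality in $\tau$ yields the estimate; the $L^2_\xi L^1_\tau$ piece of $Y^{s,b}$ is controlled by Young directly. The same recipe applies to $\t Y^{k,b}_\alpha$ with $\lb\tau-\xi^3\rb^b$ in place of $\lb\tau+\xi^2\rb^b$, and the $D_\alpha$ low-frequency component is treated like the $L^2_\xi L^1_\tau$ term. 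Sending $\varepsilon\to 0$ then closes the argument.

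The main obstacle is that, when an interval $I_j$ is unbounded, $\psi$ is only a smoothed Heaviside rather than Schwartz, so one must verify $\lb\tau\rb^{|b|}\hat\psi\in L^1_\tau$. I would handle this by splitting $\psi$ as a Schwartz bump plus a fixed-profile smoothed step function with an explicitly known Fourier transform, reducing the bound to a one-time computation; in the regime $|b|<\tfrac12$ that is in force throughout the paper this is routine, and for larger $b$ the same splitting works after noting that the step piece is compactly supported modulo constants on each side of the transition.
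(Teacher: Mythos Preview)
Your overall strategy---choose near-optimal extensions on each $I_j$, glue with a smooth partition of unity whose transition lies in $I_1\cap I_2$, and invoke a time-multiplier bound---is exactly the paper's. The paper packages it via an intermediate localization estimate $\|\eta u\|_{Y^{s,b}(I)}\lesssim\|u\|_{Y^{s,b}(I\cap J)}$ for Schwartz $\eta$ supported in $J$, then applies it with cutoffs $\eta_1+\eta_2=1$ on $I_1\cup I_2$; your version with explicit extensions is an equivalent reformulation.

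There is, however, a real gap in your execution of the multiplier bound. Your $\psi$ is a smoothed Heaviside \emph{regardless} of whether the $I_j$ are bounded, so $\hat\psi$ is not an $L^1$ function but a tempered distribution: since $\widehat{\psi'}(0)=\int\psi'=-1\neq 0$, one has (schematically) $\hat\psi=\pi\delta_0+\text{PV}(c/\tau)+(\text{Schwartz})$. Young's inequality therefore does not apply, and your proposed ``Schwartz bump plus step'' splitting does not remove the principal-value piece. That Hilbert-transform contribution is bounded on weighted $L^2_\tau$ only when the weight lies in $A_2$, which covers the $X^{s,b}$ piece for $|b|<\tfrac12$ but fails for the $D_\alpha$ piece when $\alpha>\tfrac12$; more seriously, the Hilbert transform is \emph{not} bounded on $L^1_\tau$, so the $\|\lb\xi\rb^s\hat u\|_{L^2_\xi L^1_\tau}$ component of $Y^{s,b}$ cannot be handled this way at all.

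The clean repair---and what the paper effectively does---is available whenever $I_1\cup I_2$ is bounded (the only case used later): multiply $\psi$ and $1-\psi$ by a fixed $\eta\in C^\infty_c$ with $\eta\equiv 1$ on $\overline{I_1\cup I_2}$. The resulting cutoffs $\eta\psi$ and $\eta(1-\psi)$ are compactly supported smooth, hence Schwartz, so Young applies directly to every component of the norm, and $w=(\eta\psi)u_1+(\eta(1-\psi))u_2$ still agrees with $u$ on $I_1\cup I_2$.
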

\begin{proof}
Let $I$ be an open time interval, and $\eta(t)$ be a Schwartz function supported on another open interval $J$. We claim that
 \begin{align} \label{eq: time localization}
        \| \eta(t) u \|_{Y^{s,b}(I)}\lesssim \| u \|_{Y^{s,b}(I \cap J)}. 
\end{align}
By \cite[Theorem 2.11]{Tao} and Young's convolution inequality, we have $\| \eta(t) u \|_{Y^{s,b}(I)} \lesssim \| u \|_{Y^{s,b}}$. Thus 
    \begin{align*}
        \| \eta(t) u \|_{Y^{s,b}(I)}=\inf_{u=\t u \text{ on } I \cap J}\|\eta(t) \t u\|_{Y^{s,b}(I)}\lesssim \inf_{u=\t u \text{ on } I \cap J}\| \t u \|_{Y^{s,b}}=\| u \|_{Y^{s,b}(I \cap J)}. 
    \end{align*}
Finally, by considering smooth cutoff functions $\eta_1, \, \eta_2$ such that $\eta_1 + \eta_2=1$ on $I_1 \cup I_2$ and $\operatorname{supp}(\eta_1) \cap (I_2 \setminus I_1)=\operatorname{supp}(\eta_2) \cap (I_1 \setminus I_2) = \varnothing$, the inequality
\begin{align*}
   \| u \|_{Y^{s,b}(I_1 \cup I_2)} \lesssim \| u \|_{Y^{s,b}(I_1)}+\| u \|_{Y^{s,b}(I_2)}
\end{align*}
readily follows from the triangle inequality and (\ref{eq: time localization}). We can similarly prove the second inequality.
\end{proof}

\section{Notion of a solution} \label{section: notion of a sol}

We will construct solutions to (\ref{eq: SKdV}) first by solving some integral equation on the entire real line, and then restricting the corresponding solutions to the half line. For this, we will need the following lemma regarding the extension of Sobolev functions defined on $\R^+$:
\begin{lem} \cite{ET} \label{lemma: extension}
    Let $h\in H^{s}(\R^{+})$ for some $-\frac{1}{2} < s <\frac{3}{2}$. \\
    (i) If $-\frac{1}{2}<s<\frac{1}{2}$, then $\Vert \chi_{(0,\infty)}h\Vert_{H^{s}(\R)} \lesssim \Vert h \Vert_{H^{s}(\R^{+})}$. \\
    (ii) If $\frac{1}{2}<s<\frac{3}{2}$ and $h(0)=0$, then $\Vert \chi_{(0,\infty)}h\Vert_{H^{s}(\R)} \lesssim \Vert h \Vert_{H^{s}(\R^{+})}$.
\end{lem}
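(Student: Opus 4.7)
The plan is to reduce both parts to the claim that pointwise multiplication by $\chi = \chi_{(0,\infty)}$ is a bounded operator on $H^s(\R)$ for $-\tfrac12 < s < \tfrac12$. Once this is established, part (i) follows at once: given any extension $g \in H^s(\R)$ of $h$, the distribution $\chi g$ depends only on $h$ (since $g = h$ on $\R^+$), and the multiplier bound $\|\chi g\|_{H^s(\R)} \lesssim \|g\|_{H^s(\R)}$ together with infimizing over $g$ yields $\|\chi h\|_{H^s(\R)} \lesssim \|h\|_{H^s(\R^+)}$.

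For the multiplier bound on $H^s(\R)$ with $0 < s < 1/2$, I would work with the Gagliardo (Aronszajn-Slobodeckij) seminorm characterization
\[ \|g\|_{H^s(\R)}^2 \approx \|g\|_{L^2(\R)}^2 + \iint \frac{|g(x)-g(y)|^2}{|x-y|^{1+2s}}\,dx\,dy. \]
Splitting the double integral for $\chi g$ across the four quadrants $\{x,y>0\}$, $\{x,y<0\}$, $\{x>0>y\}$, $\{y>0>x\}$, the two same-sign quadrants are immediately bounded by the Gagliardo seminorm of $g$, while the symmetric off-diagonal pieces reduce via Fubini to
\[ \int_0^\infty |g(x)|^2 \int_{-\infty}^0 \frac{dy}{(x-y)^{1+2s}}\,dx = \frac{1}{2s}\int_0^\infty \frac{|g(x)|^2}{x^{2s}}\,dx, \]
which is controlled by $\|g\|_{H^s(\R)}^2$ through the sharp one-dimensional Hardy inequality (valid precisely for $0 < s < 1/2$). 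The case $s = 0$ is trivial, and $-\tfrac12 < s < 0$ follows by duality, using that $\chi$ is self-adjoint as a multiplier and that $H^{-s}(\R)$ is the dual of $H^s(\R)$.

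For part (ii), the idea is to trade a derivative against the improved multiplier range of part (i). Since $h(0) = 0$ and $\tfrac12 < s < \tfrac32$, standard extension theory produces an extension $g \in H^s(\R)$ of $h$ with $g(0) = 0$ and $\|g\|_{H^s(\R)} \lesssim \|h\|_{H^s(\R^+)}$. Because the boundary value of $g$ at the origin vanishes, the distributional derivative has no singular part at $0$, so $\partial_x(\chi g) = \chi\,\partial_x g$. Hence
\[ \|\chi g\|_{H^s(\R)} \lesssim \|\chi g\|_{L^2(\R)} + \|\chi\,\partial_x g\|_{H^{s-1}(\R)}, \]
and since $s - 1 \in (-\tfrac12,\tfrac12)$, part (i) applied to $\partial_x g$ gives $\|\chi\,\partial_x g\|_{H^{s-1}} \lesssim \|\partial_x g\|_{H^{s-1}} \lesssim \|g\|_{H^s}$. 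Combining and taking the infimum completes the proof.

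The main obstacle is the mixed-quadrant estimate in part (i): it is the sharp Hardy inequality that both drives the bound and pins down the restriction $|s| < \tfrac12$, breaking down exactly at the endpoint $s = \tfrac12$. This threshold is precisely what forces the value $s = \tfrac12$ to be excluded throughout the paper and what makes the compatibility condition $h(0) = 0$ indispensable before one can pass from part (i) to part (ii).
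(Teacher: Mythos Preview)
The paper does not supply its own proof of this lemma; it is quoted directly from \cite{ET}. Your argument is correct and follows what is in fact the standard route: the Gagliardo seminorm plus the sharp Hardy inequality to show that $\chi_{(0,\infty)}$ is a bounded multiplier on $H^s(\R)$ for $|s|<\tfrac12$, duality for negative $s$, and then the derivative-transfer trick $\partial_x(\chi g)=\chi\,\partial_x g$ (valid because $g(0)=0$) to lift the range to $\tfrac12<s<\tfrac32$. One small wording issue: in part (ii) you invoke ``part (i) applied to $\partial_x g$,'' but what you are actually using is the multiplier bound on $H^{s-1}(\R)$ that you established as the engine of part (i), not part (i) itself (which concerns $H^s(\R^+)$ rather than $H^s(\R)$). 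The logic is fine; just be precise about which statement you are citing.
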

To construct the solutions to (\ref{eq: SKdV}) we begin by extending $S^t_0(0,f)$ and $W^t_0(0,g)$ to all $x \in \R$. We use the following explicit formulas:
\begin{align*}
    S^t_0(0,f)&=\frac{1}{\pi}\int_{0}^{\infty}e^{-i\beta^{2}t+i\beta x}\beta \hat{\chi f}(-\beta^{2})d\beta+\frac{1}{\pi}\int_{0}^{\infty}e^{i\beta^{2}t-i\beta x}\beta \hat{\chi f}(-\beta^{2})d\beta \\
    &:=S_{1}f+S_{2}f,
\end{align*}
\begin{align*}
    W_{0}^{t}(0,g)=\frac{3}{\pi}Re\int_{0}^{\infty} e^{i\beta^{3}t}e^{-i \beta x\cos(\frac{\pi}{3})}e^{-\beta x\sin(\frac{\pi}{3})}\widehat{\chi g}(\beta^{3})\beta^{2}d\beta,
\end{align*}
see \cite{CT, ET}. Let $\rho \in C^{\infty}(\mathbb{R})$ be a function supported on $(0,\infty)$ such that $\rho$=1 on $[0,\infty)$. We further assume that
\begin{align} \label{eq: mean zero extension}
    \int_{-1}^{\infty}e^{-i y\cos(\frac{\pi}{3})}e^{-y\sin(\frac{\pi}{3})}\rho(y)dy=0.
\end{align} 
We extend $S^t_0(0,f)$ and $W_{0}^{t}(0,g)$ to all $x \in \R$ by letting 
\begin{align*}
    S_{2}f(x,t)=\frac{1}{\pi}\int_{0}^{\infty}e^{i\beta^{2}t-ix}\rho(\beta x)\beta \hat{\chi f}(-\beta^{2})d\beta
\end{align*}
and
\begin{align*}
   W_{0}^{t}(0,g)=\frac{3}{\pi}Re\int_{0}^{\infty} e^{i\beta^{3}t}e^{-i \beta x\cos(\frac{\pi}{3})}e^{-\beta x\sin(\frac{\pi}{3})}\rho(\beta x)\widehat{\chi g}(\beta^{3})\beta^{2}d\beta.
\end{align*}

For $(u_{0},v_{0})\in H^s(\R^+) \times H^k(\R^+)$, let $(u_{0e},v_{0e})$ be an extension of $(u_{0},v_{0})$ to the full line satisfying $\| u_{0e}\|_{H^s(\R)} \lesssim \| u_{0}\|_{H^s(\R^+)}$ and $\| v_{0e}\|_{H^k(\R)} \lesssim \| v_{0}\|_{H^k(\R^+)}$. Note that the existence of this extension is assured by Lemma \ref{lemma: extension}. Consider the system of integral equations
\begin{align}\label{eq: SKdVintegral}
\begin{cases}
u=S_{\R}^{t}u_{0e}-i\int_{0}^{t}S_{\R}^{t-t'}F(u,v)dt'+S_{0}^{t}(0,f-p-q), \\
v=W_{\R}^{t}v_{0e}+\int_{0}^{t}W_{\R}^{t-t'}G(u,v)dt'+W_{0}^{t}(0,g-\t p-\t q),
\end{cases}
\end{align}
where
\begin{multline*}
    F(u,v)=\alpha uv+\beta|u|^{2}u, \; p(t)=D_{0}(S_{\R}^{t}u_{0e}), \;
    q(t)=D_{0}\left ( \int_{0}^{t}S_{\R}^{t-t'}F(u,v)dt'\right ), \\
    G(u,v)=\gamma(|u|^{2})_{x}-vv_{x}, \;
    \t p(t)=D_{0}(W_{\R}^{t}v_{0e}), \; \t q(t)=D_{0}\left ( \int_{0}^{t}W_{\R}^{t-t'}G(u,v)dt'\right ).
\end{multline*}
\begin{definition} \label{def: LWP}
    We say the equation (\ref{eq: SKdV}) is locally well--posed in $H^{s}(\R^{+}) \times H^{k}(\R^{+})$ if for any $(u_0,f,v_0,g) \in \mathcal{H}^{s,k}$, the system (\ref{eq: SKdVintegral}) has a unique solution $(u,v)$ in 
    \begin{multline*}
        \left [Y^{s,b}([0,T]) \cap C_t H^s_x([0,T] \times \R)\cap C_{x}H^{\frac{2s+1}{4}}_{t}(\R \times [0,T]) \right ] \\ \times \left [\t Y^{k,b}_{\alpha}([0,T]) \cap C_t H^k_x([0,T] \times \R)\cap C_{x}H^{\frac{k+1}{3}}_{t}(\R \times [0,T]) \right]
    \end{multline*}
for some $T>0$, $\alpha>\frac{1}{2}$, and any $b < \frac{1}{2}$ sufficiently close to $\frac12$. Furthermore, the solution map $(u_{0},f,v_{0},g) \mapsto (u,v)$ is continuous from $\mathcal{H}^{s,k}$ to the above space.
\end{definition}

\section{A priori estimates} \label{section: A priori estimates}
In this section, we list some estimates which will be useful to bound the terms in (\ref{eq: SKdVintegral}).

\subsection{Linear estimates}
We collect some fundamental estimates on the linear evolutions $S_{\R}^t$, $W_{\R}^t$, $S_{0}^t$, and $W_{0}^t$.  
\begin{lem}[{\cite[Lemma 2.8]{Tao}, \cite[Lemma 5.2]{CK}}]\label{estimate: Free Solutions}
For any $s, k\in\R$ and any $b \in \R$, we have 
\[ \| \eta(t)S_{\R}^{t}u_0\|_{Y^{s,b}} \lesssim \| u_0\|_{H^{s}_x(\R)} \quad\text{and}\quad \| \eta(t)W_{\R}^{t} v_0 \|_{\t Y^{k,b}_{\alpha}} \lesssim \| v_0 \|_{H^{k}_x(\R \times [0,T])}.\]  
\end{lem}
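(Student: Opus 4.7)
My plan is to reduce both estimates to direct Fourier-side computations using the explicit spectral representations of the free evolutions, then exploit the Schwartz decay of $\eta$ and a simple separation of variables.

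First, I would compute the space-time Fourier transform of the cut-off free Schr\"odinger solution. Since $\widehat{S_{\R}^t u_0}(\xi) = e^{-it\xi^2}\hat u_0(\xi)$, I get
\begin{align*}
\mathcal{F}(\eta(t) S_{\R}^t u_0)(\xi,\tau) = \hat\eta(\tau+\xi^2)\,\hat u_0(\xi).
\end{align*}
Plugging into the $X^{s,b}$ norm and making the change of variable $\sigma = \tau+\xi^2$ in the $\tau$ integral produces a clean product
\begin{align*}
\|\eta(t) S_{\R}^t u_0\|_{X^{s,b}}^2 = \|u_0\|_{H^s(\R)}^2 \cdot \|\langle\sigma\rangle^b\hat\eta(\sigma)\|_{L^2_\sigma}^2,
\end{align*}
and the second factor is finite for every $b\in\R$ because $\eta$ is Schwartz. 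The extra $L^2_\xi L^1_\tau$ piece in the $Y^{s,b}$ norm is handled the same way: by Minkowski or direct separation, $\|\langle\xi\rangle^s \hat\eta(\tau+\xi^2)\hat u_0(\xi)\|_{L^2_\xi L^1_\tau} = \|u_0\|_{H^s(\R)}\|\hat\eta\|_{L^1_\tau}$, again finite by Schwartz decay.

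The Airy side is identical in spirit: using $\widehat{W_{\R}^t v_0}(\xi) = e^{it\xi^3}\hat v_0(\xi)$, I get $\mathcal{F}(\eta(t) W_{\R}^t v_0)(\xi,\tau) = \hat\eta(\tau-\xi^3)\hat v_0(\xi)$, and the change of variable $\sigma = \tau-\xi^3$ yields $\|\eta(t) W_{\R}^t v_0\|_{\widetilde X^{k,b}} \lesssim \|v_0\|_{H^k(\R)}$. The only wrinkle is the low-frequency modification in $D_\alpha$. Here I would use the key elementary fact that on $|\xi|\le 1$ we have $|\xi^3|\le 1$ and hence $\langle\tau\rangle \lesssim \langle\tau-\xi^3\rangle$, so
\begin{align*}
\|\chi_{[-1,1]}(\xi)\langle\tau\rangle^\alpha \hat\eta(\tau-\xi^3)\hat v_0(\xi)\|_{L^2_{\xi,\tau}} \lesssim \|\chi_{[-1,1]}(\xi)\hat v_0(\xi)\|_{L^2_\xi} \cdot \|\langle\sigma\rangle^\alpha \hat\eta(\sigma)\|_{L^2_\sigma},
\end{align*}
after the same shift. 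On $|\xi|\le 1$ we have $\langle\xi\rangle^k \approx 1$ for every fixed $k$, so the $\xi$-factor is controlled by $\|v_0\|_{H^k(\R)}$; the $\sigma$-factor is finite by Schwartz. The $L^2_\xi L^1_\tau$ tail of $\widetilde Y^{k,b}_\alpha$ is again handled by Young's inequality exactly as in the Schr\"odinger case.

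There is no real obstacle, only bookkeeping: one must check that the separation-of-variables argument survives in $L^2_\xi L^1_\tau$ and that the $D_\alpha$ piece does not require any regularity of $v_0$ beyond $H^k$. Both are immediate from the observations above, so I would record the full proof in a single page. The estimate $\|\eta\|_{H^b}+\|\langle\tau\rangle^\alpha\hat\eta\|_{L^2_\tau}+\|\hat\eta\|_{L^1}\lesssim 1$ absorbs all the $(b,\alpha)$-dependence into an implicit constant, matching the statement of the lemma.
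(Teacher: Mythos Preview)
Your argument is correct and is exactly the standard one. Note that the paper does not actually supply a proof of this lemma: it is stated with citations to \cite[Lemma 2.8]{Tao} and \cite[Lemma 5.2]{CK}, so there is no in-paper proof to compare against. What you have written is precisely the computation behind those cited results---the explicit Fourier representation $\mathcal{F}(\eta S_{\R}^t u_0)(\xi,\tau)=\hat\eta(\tau+\xi^2)\hat u_0(\xi)$ (and analogously for $W_{\R}^t$), followed by the translation $\sigma=\tau+\xi^2$ (resp.\ $\sigma=\tau-\xi^3$) to decouple the $\xi$ and $\tau$ integrations---together with the obvious observation $\lb\tau\rb\lesssim\lb\tau-\xi^3\rb$ on $|\xi|\le 1$ to absorb the $D_\alpha$ piece. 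Nothing is missing.
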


\begin{lem}[{\cite[Lemma 3.1]{ET}, \cite[Lemma 4.1]{CK}}]\label{lemma: Kato Smoothing}
For any $s \geq -\frac12$ and any $k\geq -1$, we have 
\[\eta(t)S_{\R}^{t} u_0 \in C_{x}H^{\frac{2s+1}{4}}_t(\R \times \R) \quad \text{and} \quad \eta(t)W_{\R}^{t}v_0 \in C_{x}H^{\frac{k+1}3}_t(\R \times \R)\]
and we have
\[\| \eta(t)S_{\R}^{t} u_0 \|_{C_x H^{\frac{2s+1}{4}}_t(\R \times \R)} \lesssim \|u_0\|_{H^s_x(\R)} \quad \text{and} \quad \|  \eta(t)W_{\R}^{t}v_0 \|_{C_x H^{\frac{k+1}3}_t(\R \times \R))} \lesssim \| v_0\|_{H^k_x(\R)}.\]
\end{lem}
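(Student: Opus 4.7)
The plan is to use Plancherel in $t$ to convert the temporal Sobolev norm into a weighted frequency integral, then perform a change of variables along the dispersion relation ($\rho=\xi^2$ for Schr\"odinger, $\rho=\xi^3$ for Airy) in order to trade the $\tau$-weight for a spatial frequency weight on $\hat{u_0}$ (resp.\ $\hat{v_0}$). Both estimates follow the same template, so I sketch the Schr\"odinger case in detail.

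Fix $x \in \R$ and set $f_x(t) := \eta(t) S_{\R}^{t} u_0(x)$. Inserting the Fourier representation of $S_{\R}^t u_0$ and taking the $t$-Fourier transform gives
\[
\hat{f_x}(\tau) \;=\; \frac{1}{2\pi}\int_\R e^{ix\xi}\,\hat{\eta}(\tau+\xi^2)\,\hat{u_0}(\xi)\,d\xi,
\]
so the goal is $\|\lb \tau \rb^{(2s+1)/4}\hat{f_x}(\tau)\|_{L^2_\tau} \lesssim \|u_0\|_{H^s(\R)}$, uniformly in $x$. I split the $\xi$-integral at $|\xi|=1$. On the low-frequency piece, Minkowski in $\tau$ followed by the shift $\tau'=\tau+\xi^2$ reduces matters to $\int_{|\xi|\leq 1}|\hat{u_0}(\xi)|\,d\xi$ times the finite Schwartz constant $\|\lb \tau' \rb^{(2s+1)/4}\hat{\eta}\|_{L^2}$; by Cauchy--Schwarz this is bounded by $\|u_0\|_{H^s}$ as soon as $\lb\xi\rb^{-2s}$ is locally integrable at the origin, i.e.\ $s\geq-\tfrac12$. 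On the high-frequency piece, the substitution $\rho=\xi^2$ on $\{\xi>1\}$ and $\{\xi<-1\}$ separately produces
\[
\hat{f_x}^{\mathrm{high}}(\tau) \;=\; \mathcal{F}_t\bigl[\eta(t)\phi_x(t)\bigr](\tau),\qquad \phi_x(t):=\int_1^\infty e^{it\rho}g_x(\rho)\,d\rho,
\]
where $g_x(\rho) = (4\pi\sqrt\rho)^{-1}\bigl[e^{ix\sqrt\rho}\hat{u_0}(\sqrt\rho)+e^{-ix\sqrt\rho}\hat{u_0}(-\sqrt\rho)\bigr]$. Since multiplication by the Schwartz cutoff $\eta$ is bounded on $H^{a}_t$, Plancherel controls the high-frequency piece by $\int_1^\infty \lb\rho\rb^{(2s+1)/2}|g_x(\rho)|^2\,d\rho$, and reversing the substitution via $\xi=\sqrt\rho$ (Jacobian $d\rho = 2\xi\,d\xi$) collapses this integral to $\int_{|\xi|>1}\xi^{2s}|\hat{u_0}(\xi)|^2\,d\xi \lesssim \|u_0\|_{H^s}^2$.

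The Airy estimate follows the same scheme with the substitution $\rho=\xi^3$; since $\xi\mapsto\xi^3$ is a global diffeomorphism of $\R$ with Jacobian $3\xi^2\,d\xi$, one gains a full derivative rather than a half, which is why the exponent in $t$ is $(k+1)/3$ and the sharp threshold is $k\geq-1$. Continuity of the maps $x\mapsto\eta(t)S_\R^t u_0(x)$ and $x\mapsto\eta(t)W_\R^t v_0(x)$ into the respective Sobolev spaces in $t$ is routine: for Schwartz data it follows from the explicit Fourier formula and dominated convergence, and the general case is obtained by density together with the a priori bound just established. The main care point, rather than any real obstacle, is tracking the Jacobian so that no spurious blow-up appears near $\xi=0$ --- this is precisely where the thresholds $s\geq-\tfrac12$ and $k\geq-1$ become sharp.
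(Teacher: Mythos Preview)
The paper does not give its own proof here; it cites \cite[Lemma~3.1]{ET} and \cite[Lemma~4.1]{CK} and only remarks afterward that the argument in \cite{ET}, stated there for $s\ge 0$, extends verbatim to $s\ge -\tfrac12$. Your change-of-variables proof (split at $|\xi|=1$, then substitute $\rho=\xi^2$ or $\rho=\xi^3$ on the high-frequency piece and use that multiplication by $\eta$ is bounded on $H^a_t$) is the standard one and is essentially what those references do.

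One correction is worth noting. You attribute the threshold $s\ge -\tfrac12$ to the low-frequency Cauchy--Schwarz step, saying it is needed for $\lb\xi\rb^{-2s}$ to be locally integrable at $\xi=0$. But $\lb\xi\rb=(1+\xi^2)^{1/2}\ge 1$ everywhere, so $\lb\xi\rb^{-2s}$ is \emph{bounded} on $\{|\xi|\le 1\}$ for every $s\in\R$; that step imposes no constraint on $s$ at all. Likewise, since you only perform the substitution $\rho=\xi^2$ on $\{|\xi|>1\}$, the Jacobian $2\xi$ never vanishes there, so there is no ``spurious blow-up near $\xi=0$'' to track. In fact the argument as you have written it establishes the bound for all $s\in\R$ (and similarly all $k\in\R$ in the Airy case). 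The restrictions $s\ge -\tfrac12$ and $k\ge -1$ in the statement reflect the ranges recorded in the cited references and needed for the paper's applications, not a genuine obstruction in this proof. The argument is otherwise correct.
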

We remark that the above Kato smoothing result is stated in \cite{ET} for $s \geq 0$, but it holds for $s \geq -\frac12$ by the same proof. 

\begin{lem}\label{linHs} 
For any $s > -\frac{1}{2}$ and $f$ such that $\chi f \in H^\frac{2s+1}{4}(\R)$, we have 
 \begin{align*}
  S^t_0(0,f) \in C_tH^{s}_x( \R \times \R) \quad \text{and} \quad
  \eta(t) S^t_0(0,f) \in C_x H^{\frac{2s+1}{4}}_t( \R \times \R).
 \end{align*}
For any $k \geq -1$ and $g$ such that $\chi g \in H^{\frac{k+1}{3}}(\R)$, we have 
 \begin{align*}
  W^t_0(0,g) \in C_tH^{k}_x(\R \times \R) \quad \text{and} \quad
  \eta(t) W^t_0(0,g) \in C_x H^{\frac{k+1}{3}}_t( \R \times \R).
 \end{align*}
Furthermore, under the same assumptions, we have the estimates
 \begin{align*}
 \| S^t_0(0,f)\|_{C_tH^{s}_x( \R \times \R)} + \| \eta(t) S^t_0(0,f) \|_{C_x H^{\frac{2s+1}{4}}_t( \R \times \R)} \lesssim \| \chi f \|_{H^{\frac{2s+1}{4}}_t(\R)}
 \end{align*}
 and
  \begin{align*}
 \| W^t_0(0,g)\|_{C_tH^{k}_x( \R \times \R)} + \| \eta(t) W^t_0(0,g) \|_{C_x H^{\frac{k+1}{3}}_t( \R \times \R)} \lesssim \| \chi g \|_{H_t^{\frac{k+1}{3}}(\R)}.
 \end{align*}
\end{lem}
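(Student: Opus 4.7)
The plan is to use the explicit Fourier representations of $S^t_0(0,f)$ and $W^t_0(0,g)$ given above and reduce each bound to either the free-solution estimates of Lemmas \ref{estimate: Free Solutions}--\ref{lemma: Kato Smoothing} or a direct estimate on the piece carrying the cutoff $\rho(\beta x)$. For the Schr\"odinger operator I would split $S^t_0(0,f) = S_1 f + S_2 f$ and treat the two summands separately; the Airy formula admits an analogous decomposition into a ``free-type'' piece (where $\rho \equiv 1$) and a ``correction'' piece supported near the boundary.

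The term $S_1 f$, after the change of variable $\xi = \beta$, is exactly $S^t_{\R} h$ where $\hat h(\xi) = \tfrac{2}{\pi}\chi_{\xi>0}\,\xi\, \hat{\chi f}(-\xi^2)$. The substitution $u = \xi^2$ yields
\begin{align*}
\| h \|_{H^s_x}^2 \;\approx\; \int_0^\infty \lb \xi\rb^{2s}\xi^2\,|\hat{\chi f}(-\xi^2)|^2\,d\xi \;\approx\; \| \chi f\|^2_{H^{(2s+1)/4}},
\end{align*}
so both the $C_tH^s_x$ bound and the Kato trace bound follow immediately from Lemmas \ref{estimate: Free Solutions} and \ref{lemma: Kato Smoothing}. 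For $S_2 f$ I would compute its spatial Fourier transform at fixed $t$, exploiting the rapid decay of $\hat\rho$ (or, equivalently, Minkowski's inequality combined with the Schwartz decay of $\rho$ outside a neighborhood of the positive axis) to bound $\|S_2 f(\cdot,t)\|_{H^s_x}$ by a multiple of $\|\chi f\|_{H^{(2s+1)/4}}$; for the $C_xH^{(2s+1)/4}_t$ bound of $\eta(t) S_2 f$ I would fix $x$, take the $t$-Fourier transform, and use the substitution $\tau = \beta^2$ to convert the weight $\lb \tau \rb^{(2s+1)/4}$ into $\lb \beta\rb^{s+1/2}$, which pairs correctly with the integrand $\beta \,\hat{\chi f}(-\beta^2)$ after Plancherel and Young's inequality against $\hat\eta$.

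The Airy part proceeds along the same lines but makes essential use of the exponential factor $e^{-\beta x \sin(\pi/3)}$, which provides uniform decay for $\beta x \geq 0$. The free-type piece is analogous to $W^t_{\R} v_0$ with an auxiliary initial datum $v_0$ whose $H^k$-norm is comparable to $\|\chi g\|_{H^{(k+1)/3}}$ after the substitution $\tau = \beta^3$, and Lemmas \ref{estimate: Free Solutions} and \ref{lemma: Kato Smoothing} apply directly. The correction piece is the delicate part: here the mean-zero condition \eqref{eq: mean zero extension} is decisive, as it ensures the integrand vanishes at $\beta = 0$ to the right order to absorb the factor $\beta^2$ coming from the measure and yield a bound by $\|\chi g\|_{H^{(k+1)/3}}$ without demanding extra regularity on $g$. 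Continuity in $t$ (respectively in $x$) in all cases follows from the uniform bound together with dominated convergence after approximating the boundary data by Schwartz functions.

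The main obstacle is the low-regularity endpoint: $s$ close to $-\tfrac12$ for the Schr\"odinger operator and $k$ close to $-1$ for the Airy operator. In these regimes the integrands have no room to spare, and the mean-zero condition \eqref{eq: mean zero extension} for the Airy extension, together with careful control of the small-$\beta$ region for the Schr\"odinger extension, is precisely what prevents logarithmic divergences. Away from those endpoints, the remaining work is essentially bookkeeping with changes of variable and Plancherel, and the stated inequalities drop out uniformly in $t$ and $x$.
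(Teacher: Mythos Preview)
Your treatment of the Schr\"odinger operator is essentially the paper's approach (which in turn follows \cite{ET}): the $S_1$ piece is a genuine free evolution of an auxiliary datum, and the $S_2$ piece is handled by passing to the spatial Fourier transform and running a Minkowski/scaling argument. That is precisely the $T$-operator bound the paper writes out for the new range $s\in(-\tfrac12,0)$.

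The Airy part, however, has a genuine gap. Your proposed decomposition into a ``free-type'' piece (taking $\rho\equiv 1$) and a boundary correction does not work as stated: for $x<0$ the factor $e^{-\beta x\sin(\pi/3)}$ grows without bound in $\beta$, so the $\rho\equiv 1$ piece is not even defined on all of $\R$, and there is nothing for the ``correction'' to correct. More to the point, even on $x>0$ the spatial profile is $e^{-i\beta x\cos(\pi/3)}e^{-\beta x\sin(\pi/3)}$, a mode with genuinely complex wave number, so this is \emph{not} of the form $W^t_{\R}v_0$ for any real-line datum $v_0$, and Lemmas~\ref{estimate: Free Solutions}--\ref{lemma: Kato Smoothing} cannot be invoked. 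The correct route---taken in \cite{CT}, to which the paper defers, and mirrored in the paper's own argument for Schr\"odinger with $s<0$---is to treat the whole Airy boundary operator directly via the kernel $h(y)=e^{-iy\cos(\pi/3)-y\sin(\pi/3)}\rho(y)$: take the spatial Fourier transform to obtain an integral against $\hat h(\xi/\beta)$, change variables, and apply Minkowski exactly as in the $T$-operator bound. The mean-zero condition~\eqref{eq: mean zero extension} then enters just where you anticipate, to control the small-$\beta$ region at the low-$k$ endpoint.
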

\begin{proof}
All above assertions are already proved in \cite[Lemma 3.2]{ET} and \cite[Lemma 4.6]{CT}, except for the statement about $S_{0}^{t}(0,f)$ when $s \in (-\frac{1}{2},0)$. For this case, we follow the argument given in the proof of \cite[Lemma 3.2]{ET}. Let $h(x)=e^{-x}\rho(x)$, where $\rho(x)$ is a smooth function supported on $(-1,\infty)$ and $\rho(x)=1$ for $x>0$. We only show that 
\begin{align*}
    Tg(x):=\int h(\beta x)\hat{g}(\beta)d\beta
\end{align*}
is bounded in $H^s$ for $s\in(-\frac{1}{2},0)$.
\begin{multline*}
    \Vert Tg \Vert_{H^s} =\Vert \lb \xi \rb^{s} \widehat{Tg} \Vert_{L^{2}_{\xi}} 
    =\left \Vert \lb \xi \rb^{s} \int \frac{1}{\beta} \hat{h}\left(\frac{\xi}{\beta} \right) \hat{g}(\beta)d\beta \right \Vert_{L^{2}_{\xi}} \\
    =\left \Vert \lb \xi \rb^{s} \int \frac{1}{\mu} \hat{h}(\mu) \hat{g}\left(\frac{\xi}{\mu} \right)d\mu \right \Vert_{L^{2}_{\xi}} 
    \leq \int \frac{1}{|\mu|} |\hat{h}(\mu)| \left \Vert \lb \xi \rb^{s}\hat{g}\left(\frac{\xi}{\mu} \right)\right \Vert_{L^{2}_{\xi}}d\mu.
\end{multline*}
Since $s < 0$, we have
\begin{align*}
    \left \Vert \lb \xi \rb^{s}\hat{g}\left(\frac{\xi}{\mu} \right)\right \Vert_{L^{2}_{\xi}} &= |\mu|^{\frac{1}{2}}\left\{ \int \lb \mu \xi \rb^{2s}|\hat{g}(\xi)|^{2}d\xi \right\}^{\frac{1}{2}}
    \leq 
    \begin{cases}
    |\mu|^{\frac{1}{2}} \Vert g \Vert_{H^{s}} & |\mu|>1 \\
    |\mu|^{\frac{1}{2}+s} \Vert g \Vert_{H^{s}} & |\mu| \leq 1.
    \end{cases}
\end{align*}
and the desired result follows.
\end{proof}

\begin{lem} \label{lemma: S_0^t(0,f) estimate}
Let $s >-\frac12$. Then for any $f$ satisfying $\chi_{(0,\infty)}f \in H^{\frac{2s+1}{4}}(\mathbb{R})$ and $b \in (\frac{1}{6},\frac{1}{2}]$, we have 
\[
\| \eta(t) S^t_0(0,f) \|_{Y^{s,b}} \lesssim \| \chi_{(0,\infty)} f \|_{H^{\frac{2s+1}{4}}_t(\R)}.\]
\end{lem}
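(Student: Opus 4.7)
The plan is to use the explicit decomposition $S^t_0(0,f) = S_1 f + S_2 f$ provided above and to control each piece in both components of the $Y^{s,b}$ norm (the $X^{s,b}$ part and the $L^2_\xi L^1_\tau$ part).

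For $S_1 f(x,t) = \frac{1}{\pi}\int_0^\infty e^{-i\beta^2 t + i\beta x}\beta \hat{\chi f}(-\beta^2)\,d\beta$, the idea is to recognize this as a free Schr\"odinger evolution. Setting $\phi$ via $\hat\phi(\xi) = 2\xi\, \hat{\chi f}(-\xi^2)\chi(\xi)$, one has $S_1 f(x,t) = S_{\R}^t \phi(x)$. The change of variables $\tau = -\xi^2$ then yields
\[ \|\phi\|_{H^s(\R)}^2 = \int_0^\infty \langle\xi\rangle^{2s}\,4\xi^2|\hat{\chi f}(-\xi^2)|^2\,d\xi \lesssim \int_{-\infty}^0 \langle\tau\rangle^{s+\frac12}|\hat{\chi f}(\tau)|^2\,d\tau \leq \|\chi f\|_{H^{(2s+1)/4}}^2, \]
after which Lemma \ref{estimate: Free Solutions} delivers the $Y^{s,b}$ bound. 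The hypothesis $s>-1/2$ enters here only to guarantee that $\langle\tau\rangle^{s+\frac12}$ is a legitimate Sobolev weight.

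For $S_2 f$, which is not a free evolution, the plan is to compute the space-time Fourier transform of $\eta(t)S_2 f$ and estimate the $Y^{s,b}$ norm directly. After the substitution $\mu = \beta^2$, one obtains
\[ \widehat{\eta S_2 f}(\xi,\tau) = c\int_0^\infty \hat\eta(\tau - \mu)\,\hat\rho\!\left(\tfrac{\xi}{\sqrt\mu}+1\right)\hat{\chi f}(-\mu)\,\frac{d\mu}{\sqrt\mu}. \]
On the support of the integrand, the Schwartz decay of $\hat\eta$ localizes $\tau$ near $\mu$ and that of $\hat\rho$ localizes $\xi$ near $-\sqrt\mu$, so heuristically $|\tau + \xi^2| \approx 2\mu$ and $\langle\xi\rangle \approx \sqrt\mu$. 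Using Plancherel in $\tau$ to absorb the convolution with $\hat\eta$ and then changing variables $\eta = \xi/\sqrt\mu+1$ in $\xi$ (which produces a factor $\sqrt\mu$ from the Jacobian), the $X^{s,b}$ estimate reduces to
\[ \int_0^\infty \langle\mu\rangle^{s+2b-\frac12}|\hat{\chi f}(-\mu)|^2\,d\mu \lesssim \|\chi f\|_{H^{(2s+1)/4}}^2, \]
which holds provided $b \leq 1/2$. The $L^2_\xi L^1_\tau$ piece admits the same treatment after Cauchy--Schwarz in $\tau$ against a weight $\langle\tau-\mu\rangle^{-\frac12-\epsilon}$ absorbed into $\hat\eta$.

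The main obstacle is the low-frequency regime $\mu \lesssim 1$, where the heuristic $\xi \approx -\sqrt\mu$, $|\tau+\xi^2|\approx 2\mu$ breaks down and the $1/\sqrt\mu$ factor in the measure looks singular. I would split this regime off and handle it by hand, exploiting that $\hat\rho$ is Schwartz and that $\hat\eta(\tau-\mu)$ concentrates $\tau$ on a set of bounded measure, so the low-frequency contribution is dominated by $\| \chi_{[-1,1]}(\mu)\hat{\chi f}\|_{L^2}$, which is in turn bounded by $\|\chi f\|_{H^{(2s+1)/4}}$ for any $s > -1/2$. The lower restriction $b > 1/6$ is not forced by this lemma individually; it is kept in the statement because it will be imposed by the multilinear estimates that $\eta(t)S^t_0(0,f)$ must be paired with downstream.
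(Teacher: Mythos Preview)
Your treatment of $S_1 f$ is correct and matches the paper: both recognize $S_1 f$ as a free Schr\"odinger evolution $S_\R^t\psi$ with $\hat\psi(\xi)=\xi\,\widehat{\chi f}(-\xi^2)\chi_{(0,\infty)}(\xi)$ and reduce to Lemma~\ref{estimate: Free Solutions}.

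For $S_2 f$ there is a genuine gap. You take the spatial profile to be $e^{-i\beta x}\rho(\beta x)$ and then invoke ``the Schwartz decay of $\hat\rho$'' to localize $\xi$ near $-\sqrt\mu$. But $\rho$ equals $1$ on $[0,\infty)$, so $\rho\notin L^1(\R)$ and $\hat\rho$ is a tempered distribution with a singular part at the origin (of the form $\pi\delta + \mathrm{p.v.}\,\frac{1}{i\xi}$ plus something smooth); it has no Schwartz decay whatsoever. Consequently your displayed formula for $\widehat{\eta S_2 f}$ and the ensuing localization heuristic both break down. The paper's proof instead works with $h(x)=e^{-x}\rho(x)$, which \emph{is} Schwartz, so that $\hat h(\xi/\mu)$ genuinely provides rapid decay; this reflects the fact that the correct spatial profile in $S_2 f$ carries real exponential decay $e^{-\beta x}$ (the $e^{-ix}$ in the displayed extension formula is a typo). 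Once you replace $\rho$ by $h$ your overall strategy becomes viable.

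Even with the correct $h$, your reduction of the $X^{s,b}$ bound to the single integral $\int_0^\infty\langle\mu\rangle^{s+2b-\frac12}|\widehat{\chi f}(-\mu)|^2\,d\mu$ is too optimistic as stated: the weight $\langle\tau+\xi^2\rangle^b\langle\xi\rangle^s$ does not collapse to a power of $\mu$ by ``Plancherel in $\tau$'' alone, because $\hat h(\xi/\sqrt\mu)$ only localizes $|\xi|\lesssim\sqrt\mu$ up to polynomial tails in $\xi/\sqrt\mu$. The paper (following \cite{ET}) handles this by an explicit case split according to the relative sizes of $|\xi|$ and $\beta$, and for $s\in(-\tfrac12,0)$ tracks the multiplier $[\langle\xi\beta^{-1}\rangle/\langle\xi\rangle]^{-s}$ through each case. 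Your low-frequency discussion is fine in spirit, but the high-frequency regime needs this kind of careful bookkeeping rather than a one-line heuristic.
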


\begin{proof} The estimate for the $X^{s,b}$-norm is proved in {\cite[Proposition 3.3]{ET}}, with the exception of $s \in (-\frac12, 0)$ case. For this case, we follow the argument given in the proof of \cite[Proposition 3.3]{ET}. The argument from that proof for the $S_1$ term holds for negative $s$ also. For the $S_2$ term, we study $\lb \partial \rb^{s} \left( \eta S_2 f \right)$, where $s \in (-\frac12, 0)$. By interpolation, it suffices to consider this case.

Let $h(x)=e^{-x}\rho(x)$, where $\rho(x)$ is a smooth function supported on $(-1,\infty)$ and $\rho(x)=1$ for $x>0$. Notice that 
\[ \mathcal{F}\left[\lb \partial \rb^{s} \left( \eta S_2 f \right)\right](\tau,\xi)  =  \int_0^\infty \hat{\eta}(\tau - \beta^2) \left[ \frac{\lb \xi \beta^{-1}\rb}{\lb \xi \rb} \right]^{-s}  \hat{ \lb \partial \rb^{s} h }(\xi \beta^{-1}) \hat{f}(\beta^2) \d \beta . \] 
In the case that $|\beta| \lesssim |\xi|$ or $|\beta| \lesssim 1$, the multiplier $\left [ \frac{\lb \xi \beta^{-1}\rb}{\lb \xi \rb} \right ]^{-s} $ of order at most $\beta^{s}$. In this case, the argument can be completed exactly as in the $s =0$ case given in \cite{ET}. 

It remains to consider the case where $1, |\xi| \ll |\beta|$. In this case, $\left[ \frac{\lb \xi \beta^{-1}\rb}{\lb \xi \rb} \right ]^{-s} \approx \lb \xi \rb^{s}$. Following the argument in \cite{ET} leads us to estimate 
\[ \left\|(\beta^2 + \xi^2)^{-\frac12 }\lb \xi \rb^{s}\right\|_{L^2_{|\xi| \ll |\beta|}} \approx \beta^{-1} \left\|\lb \xi \rb^{s}\right\|_{L^2_{|\xi| \ll |\beta|}}.\] It would suffice to show that this norm is bounded by $\beta^\frac{2s-1}{2}$. But since $-\frac12 < s < 0$, we have $\left\|\lb \xi \rb^{s}\right\|_{L^2_{|\xi| \ll |\beta|}} \approx \beta^{\frac{2s+1}{2}}$, which gives exactly the desired result. 

We turn to estimate the $L^2_{\xi}L^{1}_{\tau}$-norm. We start by estimating $\Vert \lb \xi \rb^{s}\hat{\eta S_{2}f} \Vert_{L^{2}_{\xi}L^{1}_{\tau}}$:
\begin{align*}
        \| \lb \xi \rb^{s} \hat{\eta S_{2}f} \|_{L^2_{\xi}L^{1}_{\tau}} &=\left \| \lb \xi \rb^{s} \int_{0}^{\infty} \hat{\eta}(\tau-\mu^{2})\hat{h}\left(\frac{\xi}{\mu}\right)\widehat{\chi f}(\mu^{2})d\mu \right \|_{L^2_{\xi}L^{1}_{\tau}}\\
        & \lesssim \left\| \lb \xi \rb^{s} \int_{0}^{\infty} \left | \hat{h}\left(\frac{\xi}{\mu}\right) \right | |\widehat{\chi f}(\mu^{2})| d\mu \right\|_{L^2_{\xi}} \\
        &= \left\| \lb \xi \rb^{s} \int_{0}^{\infty} |\hat{h}(\beta)| \left|\widehat{\chi f}\left(\frac{\xi^{2}}{\beta^{2}}\right)\right|\frac{|\xi|}{\beta^{2}} d\beta \right\|_{L^2_{\xi}} \\
        &\lesssim \int_{0}^{\infty} \frac{|\hat{h}(\beta)|}{|\beta|^{2}} \left \| \lb \xi \rb^{s}|\xi|\widehat{\chi f}\left(\frac{\xi^{2}}{\beta^{2}}\right)\right \|_{L^{2}_{\xi}} d\beta.
\end{align*}
Letting $\rho=\frac{\xi^{2}}{\beta^{2}}$, we have
\begin{align}
         \left \| \lb \xi \rb^{s}|\xi| \widehat{\chi f}\left(\frac{\xi^{2}}{\beta^{2}}\right)\right \|_{L^{2}_{\xi}} \approx |\beta|^{\frac{3}{2}} \left \{ \int \lb \beta \rho^{\frac{1}{2}} \rb^{2s}|\rho|^{\frac{1}{2}}|\hat{\chi f(\rho)}|^{2}d\rho\right \}^{\frac{1}{2}}. \label{eq:auxiliary}
\end{align}

If $s \geq 0$, we use the inequality $\lb \beta \rho^{\frac{1}{2}}\rb \lesssim \lb \beta \rb \lb \rho \rb^{\frac{1}{2}}$ to bound the right hand side of (\ref{eq:auxiliary}) by $|\beta|^{\frac{3}{2}}\lb \beta \rb^{s}\| \chi_{(0,\infty)} f\|_{H^{\frac{2s+1}{4}}}$.

Let $-\frac{1}{2}<s<0$. For $|\beta| \geq 1$, we use the inequality $\lb \beta \rho^{\frac{1}{2}}\rb^{2s} \lesssim \lb \rho \rb^{s}$. For $|\beta| < 1$, we use that $\lb \beta \rho^{\frac{1}{2}}\rb^{2s} \lesssim |\beta|^{2s}\lb \rho \rb^{s}$. 

Finally, the estimate $\Vert \lb \xi \rb^{s} \hat{\eta S_{1}f}\Vert_{L^2_{\xi}L^{1}_{\tau}} \lesssim \| \chi_{(0,\infty)}f\|_{H^{\frac{2s+1}{4}}}$ follows directly from Lemma \ref{estimate: Free Solutions} since $S_{1}f=S_{\R}\psi$, where $\hat{\psi}(\beta)=\beta \hat{f}(-\beta^{2})\chi_{(0,\infty)}(\beta)$.
\end{proof}

\begin{lem} \label{lemma: V_0^t(0,g) estimate}
Fix $b \in (\frac{1}{6},\frac{1}{2}]$. Let $k \geq \frac{1}{2}-3b$. Then for any $g$ satisfying $\chi_{(0,\infty)}g \in H^{\frac{k+1}{3}}(\mathbb{R})$, and $\frac{1}{2}<\alpha \leq \frac{k}{3}+1$, we have
    \begin{align*}
    \left \| \eta(t) W^t_0(0,g) \right \|_{\t Y^{k,b}_{\alpha}} \lesssim \| \chi_{(0,\infty)}g \|_{H^{\frac{k+1}{3}}}.
    \end{align*}
\end{lem}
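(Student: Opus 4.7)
The plan is to follow the same general strategy used for the Schrödinger boundary estimate in Lemma \ref{lemma: S_0^t(0,f) estimate}, adapted to the Airy dispersion relation $\tau=\xi^3$. The first step is to compute the space-time Fourier transform of $\eta(t)W_0^t(0,g)$. Using the explicit representation and letting $K(y)=e^{-iy\cos(\pi/3)}e^{-y\sin(\pi/3)}\rho(y)$, one obtains (up to complex conjugation giving a symmetric contribution)
\[
\widehat{\eta W_0^t(0,g)}(\xi,\tau) \;\sim\; \int_0^\infty \widehat{\eta}(\tau-\beta^3)\,\widehat{K}(\xi/\beta)\,\widehat{\chi g}(\beta^3)\,\beta\, d\beta,
\]
which, after the substitution $\mu=\beta^3$, becomes an integral of $\widehat{\eta}(\tau-\mu)\widehat{K}(\xi/\mu^{1/3})\widehat{\chi g}(\mu)\mu^{-1/3}$. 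Two crucial facts about $K$ will be used repeatedly: $K$ is Schwartz on $(-1,\infty)$, and the mean-zero normalization (\ref{eq: mean zero extension}) guarantees $\widehat{K}(0)=0$, so $|\widehat{K}(\omega)|\lesssim |\omega|$ near the origin.

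Next I would bound the $\t X^{k,b}$ norm by Minkowski in $\mu$, pulling the $\mu$-integral outside and estimating, for each fixed $\mu$,
\[
\bigl\|\langle \xi\rangle^k \langle \tau-\xi^3\rangle^b \widehat{\eta}(\tau-\mu)\widehat{K}(\xi/\mu^{1/3})\bigr\|_{L^2_{\xi,\tau}}.
\]
Since $\widehat{\eta}$ is Schwartz, one splits $\langle \tau-\xi^3\rangle \lesssim \langle \tau-\mu\rangle\langle \mu-\xi^3\rangle$ and performs the $\tau$-integration first, then rescales $\xi=\mu^{1/3}\omega$ to reduce to a $\mu$-dependent factor of the form $\mu^{1/6}\cdot\Psi(\mu)$ with $\Psi(\mu)$ controlled by the Schwartz decay of $\widehat{K}$ away from $|\omega|\lesssim 1$ and by the mean-zero decay $|\widehat{K}(\omega)|\lesssim|\omega|$ for $|\omega|\ll 1$. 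Cauchy–Schwarz against $\|\langle\mu\rangle^{(k+1)/3}\widehat{\chi g}(\mu)\|_{L^2_\mu}$ then reduces matters to verifying finiteness of an integral in $\mu$, which is precisely where the hypothesis $k\geq \frac12-3b$ enters (to handle the range $|\omega|\sim 1$ where $\langle \mu-\mu\omega^3\rangle$ is small).

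For the $L^2_\xi L^1_\tau$ component of the $\t Y^{k,b}_\alpha$ norm I would compute the $L^1_\tau$ norm first, using $\|\widehat{\eta}\|_{L^1}<\infty$ to kill the $\tau$-dependence and produce the scalar kernel
\[
\int_0^\infty |\widehat{K}(\xi/\mu^{1/3})|\,\mu^{-1/3}|\widehat{\chi g}(\mu)|\,d\mu,
\]
whose $L^2_\xi$ norm with weight $\langle \xi\rangle^k$ can again be bounded by $\|\chi g\|_{H^{(k+1)/3}}$ via the same rescaling and Cauchy–Schwarz argument. For the low-frequency correction $D_\alpha$, I restrict to $|\xi|\leq 1$ and use $|\widehat{K}(\xi/\mu^{1/3})|\lesssim |\xi|/\mu^{1/3}$ when $\mu^{1/3}\gtrsim 1$, together with the pointwise bound $\langle\tau\rangle^\alpha|\widehat{\eta}(\tau-\mu)|\lesssim \langle\mu\rangle^\alpha\langle\tau-\mu\rangle^{-N}$; this is exactly the place where the upper bound $\alpha\leq \frac{k}{3}+1$ is consumed, while the constraint $\alpha>\frac12$ ensures integrability after performing the $\tau$-integral.

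The main obstacle will be bookkeeping the interplay of the three regimes $|\xi|\ll \mu^{1/3}$, $|\xi|\sim \mu^{1/3}$, and $|\xi|\gg \mu^{1/3}$ in the rescaled variable $\omega=\xi/\mu^{1/3}$, especially the near-resonant regime $|\omega|\sim 1$ where $\langle \mu-\xi^3\rangle$ can be as small as $1$ and neither the Schwartz decay of $\widehat{K}$ nor its vanishing at $0$ helps; here the weight $\langle \mu-\xi^3\rangle^b$ must be absorbed by the $\mu$-integration, and this is precisely the step that forces $k\geq \frac12-3b$. The other technical care is making sure the low-frequency bound for $|\xi|\leq 1$ is not spoiled by the region $\mu\lesssim 1$, which is handled by the uniform bound $|\widehat{K}|\lesssim 1$ and the compact support in $\mu$ of that piece.
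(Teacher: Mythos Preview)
Your approach is essentially the same as the paper's: the Fourier representation via $K(y)=e^{-iy\cos(\pi/3)}e^{-y\sin(\pi/3)}\rho(y)$, the crucial use of the mean-zero normalization (\ref{eq: mean zero extension}) to get $|\widehat K(\omega)|\lesssim|\omega|$ near the origin, and the split into large and small $\mu$ for the $D_\alpha$ piece all match. The only organizational difference is that the paper does not reprove the $\t X^{k,b}$ bound at all---it simply cites \cite[Lemma 4.5]{CT}, where the constraint $k\ge \tfrac12-3b$ already appears---whereas you outline a direct argument via the regimes $|\omega|\ll1$, $|\omega|\sim1$, $|\omega|\gg1$; your sketch is a reasonable reconstruction of how that cited lemma is proved.

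One small correction: the lower bound $\alpha>\tfrac12$ plays no role in the proof. It is imposed only because the resolution space $\t X^{k,b}_\alpha$ with $\alpha>\tfrac12$ is what is needed elsewhere (Proposition~\ref{estimate:vv_x}); in the $D_\alpha$ estimate itself only the upper bound $\alpha\le\tfrac{k}{3}+1$ is consumed, exactly as you say, and integrability in $\tau$ comes purely from the Schwartz decay of $\widehat\eta$.
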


\begin{proof}
The estimate $\| \eta(t) W^t_0(0,g)\|_{\t X^{k,b}} \lesssim \| \chi g \|_{H^\frac{k+1}3_t(\R)}$ is proved in {\cite[Lemma 4.5]{CT}}. We continue with the $D_{\alpha}$-norm.

Let $h(y)=e^{-i y\cos(\frac{\pi}{3})}e^{-y\sin(\frac{\pi}{3})}\rho(y)$. Then
\begin{align*}
    \widehat{\eta W_{0}^{t}}(0,g)(\xi,\tau)=\frac{3}{\pi}Re\int_{0}^{\infty} \hat{\eta}(\tau-\mu^{3})\hat{h}\left(\frac{\xi}{\mu}\right)\widehat{\chi g}(\mu^{3})\mu d\mu.
\end{align*}
Recall that by (\ref{eq: mean zero extension}), $\hat{h}$ is a Schwartz function such that $\hat{h}(0)=0$ which implies
\begin{align} \label{eq: Lipschitz}
    |\hat{h}(\eta)| \lesssim |\eta|.
\end{align}
Therefore for any $\mu>0$ we have
\begin{align*}
    \left\Vert\hat{h}\left(\frac{\xi}{\mu}\right)\right \Vert_{L^{2}_{|\xi|\leq 1}}=\left\{\mu\int_{|\eta|<\mu^{-1}}|\hat{h}(\eta)|^{2}d\eta\right\}^{\frac{1}{2}} 
    \lesssim \left\{\mu\int_{|\eta|<\mu^{-1}}|\eta|^{2}d\eta\right\}^{\frac{1}{2}}
    \lesssim \mu^{-1}.
\end{align*}
Also, note that 
\begin{align*}
    \left\Vert\hat{h}\left(\frac{\xi}{\mu}\right)\right \Vert_{L^{2}_{|\xi|\leq 1}} \lesssim \left\{\mu\int_{-\infty}^{\infty}|\hat{h}(\eta)|^{2}d\eta\right\}^{\frac{1}{2}} 
    \lesssim \mu^{\frac{1}{2}}.
\end{align*}
Therefore, for $\frac{1}{2}<\alpha \leq \frac{k}{3}+1$,
\begin{align*}
    \left\Vert \eta (t) W_{0}^{t}(0,g)\right\Vert_{D_{\alpha}} &\lesssim \left\Vert \int_{0}^{\infty} \lb \tau \rb^{\alpha}\hat{\eta}(\tau-\mu^{3})\hat{h}\left(\frac{\xi}{\mu}\right)\widehat{\chi g}(\mu^{3})\mu d\mu \right \Vert_{L^{2}_{\tau}L^{2}_{|\xi|\leq 1}}\\
        &\lesssim \left\Vert \int_{0}^{\infty} \lb\tau \rb^{\alpha}|\hat{\eta}(\tau-\mu^{3})|\left\Vert\hat{h}\left(\frac{\xi}{\mu}\right)\right \Vert_{L^{2}_{|\xi|\leq 1}}|\widehat{\chi g}(\mu^{3})||\mu| d\mu \right \Vert_{L^{2}_{\tau}}\\
        & \lesssim  \left\Vert \int_{0}^{1} \lb\tau \rb^{\alpha}|\hat{\eta}(\tau-\mu^{3})||\widehat{\chi g}(\mu^{3})||\mu|^{\frac{3}{2}} d\mu \right \Vert_{L^{2}_{\tau}} \\
        & \qquad +\left\Vert \int_{1}^{\infty} \lb\tau \rb^{\alpha}|\hat{\eta}(\tau-\mu^{3})||\widehat{\chi g}(\mu^{3})| d\mu \right \Vert_{L^{2}_{\tau}}=:\textit{I\,}+\textit{II\,}.
\end{align*}
To estimate $\textit{I\,}$, we recall that $\hat{\eta}$ is a Schwartz function, thus $|\hat{\eta}(\tau-\beta)| \lesssim \lb \tau - \beta \rb^{-1-\alpha}$:
\begin{align*}
        \textit{I\,} &\lesssim \left\Vert \int_{0}^{1} \lb\tau \rb^{\alpha}|\hat{\eta}(\tau-\beta)||\widehat{\chi g}(\beta)||\beta|^{-\frac{1}{6}} d\beta \right \Vert_{L^{2}_{\tau}}\\
        &\lesssim \left\Vert \int_{0}^{1} \lb\tau \rb^{\alpha}\lb\tau -\beta \rb^{-1-\alpha}|\widehat{\chi g}(\beta)||\beta|^{-\frac{1}{6}} d\beta \right \Vert_{L^{2}_{\tau}} \\
        &\lesssim \int_{0}^{1} \left\Vert \lb \tau \rb^{\alpha}\lb\tau -\beta \rb^{-1-\alpha}\right \Vert_{L^{2}_{\tau}}|\widehat{\chi g}(\beta)||\beta|^{-\frac{1}{6}} d\beta \\
        &\lesssim \left(\int_{0}^{1}|\beta|^{-\frac{1}{3}}d\beta\right)^{\frac{1}{2}} \left\Vert \widehat{\chi g} \right \Vert_{L^{2}([0,1])} \lesssim \left\Vert \chi_{(0,\infty)}g \right \Vert_{H^{\alpha-\frac{2}{3}}}. 
\end{align*}
We use Young's convolution inequality to estimate $\textit{II\,}$:
\begin{align*}
        \textit{II\,} &\lesssim \left\Vert \int_{1}^{\infty} \lb\tau \rb^{\alpha}|\hat{\eta}(\tau-\beta)||\widehat{\chi g}(\beta)||\beta|^{-\frac{2}{3}} d\beta \right \Vert_{L^{2}_{\tau}}\\
        &\lesssim \left\Vert \int_{1}^{\infty} \lb\tau-\beta \rb^{\alpha}|\hat{\eta}(\tau-\beta)||\widehat{\chi g}(\beta)|\lb\beta\rb^{\alpha-\frac{2}{3}} d\beta \right \Vert_{L^{2}_{\tau}} \\
        &\lesssim \left\Vert \lb\tau \rb^{\alpha}|\hat{\eta}(\tau)|\right \Vert_{L^{1}_{\tau}} \left\Vert |\widehat{\chi g}(\beta)|\lb\beta\rb^{\alpha-\frac{2}{3}} \right \Vert_{L^{2}_{\beta}} \\
        &\lesssim \left\Vert \chi_{(0,\infty)}g \right \Vert_{H^{\alpha-\frac{2}{3}}}.
\end{align*}

Finally, a similar argument as in the proof of Lemma \ref{lemma: S_0^t(0,f) estimate} shows $\| \lb \xi \rb^{k} \hat{\eta W_{0}^{t}}(0,g) \|_{L^2_{\xi}L^{1}_{\tau}} \lesssim \| \chi_{(0,\infty)}g\|_{H^{\frac{k+1}{3}}}$. We note that (\ref{eq: Lipschitz}) is essential for this estimate in the case $-\frac{3}{2} < k \leq -\frac{1}{2}$.
\end{proof}

\subsection{Nonlinear estimates}\label{section: nonlinear estimates}

\begin{lem}\label{lemma: inhomogeneous}
    For any $s, k \in \R$, $0 \leq b <\frac{1}{2}$ and $\alpha \leq 1-b$, we have 
    \begin{align*}
    \left \| \eta(t) \int_{0}^{t} S_{\R}^{t-t'}N(t')dt' \right \|_{Y^{s,b}} \lesssim \| N \|_{X^{s,-b}}
    \end{align*}
    and
    \begin{align*}
    \left \| \eta(t) \int_{0}^{t} W_{\R}^{t-t'}N(t') dt' \right \|_{\t Y^{k,b}_{\alpha}} \lesssim \| N \|_{\t X^{k,-b}}.
    \end{align*}
\end{lem}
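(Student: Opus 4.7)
The plan is to express the Duhamel integral on the spatial Fourier side via the standard identity
\[\int_0^t e^{-i(t-t')\xi^2}(\mathcal{F}_x N)(\xi,t')\,dt' = \int \frac{e^{it\tau}-e^{-it\xi^2}}{i(\tau+\xi^2)}\hat N(\xi,\tau)\,d\tau,\]
and the analogue with $\tau+\xi^2$ replaced by $\tau-\xi^3$ in the Airy case. Using a smooth cutoff $\psi$ equal to $1$ on $|\mu|\le 1$ and supported in $|\mu|\le 2$, I would split the integrand into low-modulation and high-modulation pieces with respect to $\tau+\xi^2$ (resp.\ $\tau-\xi^3$). On the low-modulation piece, Taylor expansion
\[\frac{e^{it(\tau+\xi^2)}-1}{i(\tau+\xi^2)} = \sum_{n\ge 1}\frac{(it)^n(\tau+\xi^2)^{n-1}}{n!}\]
rewrites the contribution as a rapidly convergent series of free solutions $(it)^n e^{-it\xi^2}a_n(\xi)/n!$ with $a_n(\xi) := \int\psi(\tau+\xi^2)(\tau+\xi^2)^{n-1}\hat N(\xi,\tau)\,d\tau$.

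The $X^{s,b}$ (resp.\ $\t X^{k,b}$) portion of the target norm then reduces to the classical energy estimate of Ginibre--Tsutsumi--Velo, which I would simply cite from \cite{Tao}. For the auxiliary $L^2_\xi L^1_\tau$ component of $Y^{s,b}$, multiplication by $\eta(t)$ becomes convolution with $\hat\eta$ in $\tau$, so Young's inequality $L^1 * L^1 \to L^1$ combined with Cauchy--Schwarz bounds the high-modulation $L^1_\tau$ norm by
\[\left\|\frac{(1-\psi(\tau+\xi^2))\hat N(\xi,\tau)}{\tau+\xi^2}\right\|_{L^1_\tau} \lesssim \|\lb\tau+\xi^2\rb^{-b}\hat N(\xi,\cdot)\|_{L^2_\tau}\,\|\chi_{|\mu|>1}\lb\mu\rb^{b-1}\|_{L^2_\mu},\]
where the last factor is finite precisely because $b<\tfrac12$. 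The subtracted free-solution term has time-Fourier transform of the form $\hat\eta(\tau+\xi^2)\cdot c(\xi)$ whose $L^1_\tau$ norm is obviously $\|\hat\eta\|_{L^1}|c(\xi)|$, and $|c(\xi)|$ is controlled by the same Cauchy--Schwarz bound. The low-modulation summands produce $\hat\eta_n(\tau+\xi^2)a_n(\xi)/n!$ with $\hat\eta_n$ Schwartz, and Cauchy--Schwarz on the compact set $|\tau+\xi^2|\le 2$ gives $|a_n(\xi)|\lesssim \|\lb\tau+\xi^2\rb^{-b}\hat N(\xi,\cdot)\|_{L^2_\tau}$. Integrating against $\lb\xi\rb^{2s}\,d\xi$ then recovers $\|N\|_{X^{s,-b}}$, and the KdV case runs identically with $\tau+\xi^2$ replaced by $\tau-\xi^3$.

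The only genuinely new ingredient is the $D_\alpha$ component of $\t Y^{k,b}_\alpha$. On $|\xi|\le 1$ one has $\lb\tau\rb\approx\lb\tau-\xi^3\rb$, so distributing $\lb\tau\rb^\alpha$ across the convolution with $\hat\eta$ via $\lb\tau\rb^\alpha\lesssim \lb\tau-\tau'\rb^\alpha+\lb\tau'\rb^\alpha$ and invoking Young's $L^1 * L^2 \to L^2$ inequality reduces the high-modulation contribution to controlling
\[\int_{|\xi|\le 1}\int_{|\tau-\xi^3|>1}\lb\tau-\xi^3\rb^{2(\alpha-1)}|\hat N(\xi,\tau)|^2\,d\tau\,d\xi.\]
Factoring $\lb\tau-\xi^3\rb^{2(\alpha-1)} = \lb\tau-\xi^3\rb^{2(\alpha-1+b)}\lb\tau-\xi^3\rb^{-2b}$ dominates this by $\|N\|_{\t X^{k,-b}}^2$ precisely when $\alpha\le 1-b$; the low-modulation contributions are immediate since each $\hat\eta_n$ is Schwartz and $|\xi|\le 1$ keeps $\lb\xi\rb^k$ bounded. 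The compatibility inequality $\alpha\le 1-b$ between the low-frequency time weight and the dual modulation index is the only place that demands care; everything else is routine bookkeeping using the Schwartz decay of $\hat\eta$.
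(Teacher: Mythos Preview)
Your argument is correct and rests on the same Fourier--side machinery as the paper's proof. The paper, however, is far more compressed: for the $\t X^{k,b}_\alpha$ portion (including the $D_\alpha$ piece that you work out explicitly) it simply cites \cite[Lemma~5.4]{CK}, and for the $L^2_\xi L^1_\tau$ component it bypasses the low/high modulation split entirely by first invoking the intermediate bound
\[
\Bigl\| \lb\xi\rb^{k}\,\mathcal F\Bigl[\eta(t)\!\int_0^t W_{\R}^{t-t'}N\,dt'\Bigr]\Bigr\|_{L^2_\xi L^1_\tau}
\lesssim \|\lb\xi\rb^{k}\lb\tau-\xi^3\rb^{-1}\hat N\|_{L^2_\xi L^1_\tau}
\]
from \cite{ET1}, and then closing with a single Cauchy--Schwarz in $\tau$, which is where $b<\tfrac12$ enters. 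Your explicit verification of the $D_\alpha$ bound, and in particular your identification of $\alpha\le 1-b$ as the precise constraint, is the one piece of content the paper delegates to a citation; otherwise the two routes coincide.
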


\begin{proof}
We only prove the second inequality. First, it is proved in \cite[Lemma 5.4]{CK} that
\begin{align*}
    \left \| \eta(t) \int_{0}^{t} W_{\R}^{t-t'}N(t') dt' \right \|_{\t X^{k,b}_{\alpha}} \lesssim \| N \|_{\t X^{k,-b}}.
\end{align*}
The following estimate is standard (e.g., see the proof of \cite[Lemma 3.16]{ET1}):
\begin{align*}
    \left \| \lb \xi \rb^{k}\mathcal{F}\left [\eta(t) \int_{0}^{t} W_{\R}^{t-t'}N(t')dt' \right ] \right \|_{L^{2}_{\xi}L^{1}_{\tau}} \lesssim \|\lb \xi \rb^{k}\lb \tau-\xi^{3} \rb^{-1}\hat{N}(\xi, \tau) \|_{L^2_{\xi}L^{1}_{\tau}}.
\end{align*}
However, since 
\begin{align*}
        \| \lb \xi \rb^{k}\lb \tau -\xi^{3} \rb^{-1} \hat{N} \|^{2}_{L^{2}_{\xi}L^{1}_{\tau}} 
        &=\int\left ( \int \lb \xi \rb^{k}\lb \tau -\xi^{3} \rb^{-1} |\hat{N}| d\tau\right)^{2} d\xi \\
        &\leq \int\left (\int \lb \xi \rb^{2k}\lb \tau -\xi^{3} \rb^{-2b} |\hat{N}|^{2} d\tau\right) \left (\int \frac{d\tau}{\lb \tau -\xi^{3} \rb^{2(1-b)}}\right) d\xi \\
        &\approx \iint \lb \xi \rb^{2k}\lb \tau -\xi^{3} \rb^{-2b} |\hat{N}|^{2} d\tau d\xi \\
        &=\| N \|^{2}_{\t X^{k,-b}},
\end{align*}
the desired bound follows.
\end{proof}

\begin{lem}[{\cite[Proposition 3.7]{ET2}}] \label{lemma: NLS Correction}
	Fix $b \in (0,\frac12)$. Let $R$ be the set given by $R = \bigl\{ (\xi,\tau) \; : \; |\tau| \geq 1  \quad \text{and} \quad |\tau | \gg |\xi|^2\bigr\}$. Then we have
	\begin{multline*}
	\left\| \eta(t) \int_{0}^{t} S^{t-t'}_{\R}N(t')d t' \right\|_{C_x H^\frac{2s+1}{4}_t( \R \times \R)} \\ \lesssim
	\begin{cases}
	\| N\|_{X^{s,-b}} &\; 0 \leq s \leq \frac12, \\
	\| N \|_{X^{s,-b}} + \left\| \int \chi_R(\xi, \tau) \lb \tau + \xi^2 \rb^{\frac{2s-3}4} |\hat{N}(\xi,\tau)| d \xi \right\|_{L^2_\tau} & \qquad  s > \frac12. 
	\end{cases}
	\end{multline*}
\end{lem}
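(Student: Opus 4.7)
The plan is to follow the standard derivation of a Kato-type smoothing estimate for the inhomogeneous Schr\"odinger equation: expand the Duhamel integral in spacetime Fourier variables, split according to the size of the modulation $\tau+\xi^2$, and reduce each piece either to the homogeneous Kato smoothing estimate of Lemma \ref{lemma: Kato Smoothing} or to a direct time-Fourier analysis at fixed $x$. The region $R$ is exactly where the direct argument loses control once $s>\frac12$, and this is what forces the $L^1_\xi$-type correction.

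Writing
\begin{align*}
\eta(t)\int_0^t S_{\R}^{t-t'}N(t')\,dt' = \eta(t)\int e^{ix\xi}\,\frac{e^{it\tau}-e^{-it\xi^2}}{i(\tau+\xi^2)}\,\hat N(\xi,\tau)\,d\xi\,d\tau,
\end{align*}
I would split the $\tau$-integration into a low-modulation region $|\tau+\xi^2|\leq 1$ and a high-modulation region $|\tau+\xi^2|>1$. On the low-modulation piece, Taylor expand
\begin{align*}
\frac{e^{it(\tau+\xi^2)}-1}{i(\tau+\xi^2)} = \sum_{n\geq 1}\frac{(it)^n(\tau+\xi^2)^{n-1}}{n!},
\end{align*}
so that each summand has the form $t^n\eta(t)S_{\R}^t\psi_n$, where $\hat\psi_n(\xi)$ is the integral of $\hat N(\xi,\tau)(\tau+\xi^2)^{n-1}$ over $|\tau+\xi^2|\leq 1$. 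Cauchy--Schwarz in $\tau$ on this bounded set (using $b<\tfrac12$) yields $\|\psi_n\|_{H^s(\R)}\lesssim \frac{C^n}{n!}\|N\|_{X^{s,-b}}$; Lemma \ref{lemma: Kato Smoothing} then controls each summand in $C_xH^{(2s+1)/4}_t$, and the series converges thanks to the Schwartz decay of $\eta$.

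The high-modulation part splits into two further pieces. The one with time phase $e^{-it\xi^2}$ is a homogeneous Schr\"odinger evolution whose data has $H^s$-norm bounded by $\|N\|_{X^{s,-b}}$ (Cauchy--Schwarz in $\tau$ against $\lb\tau+\xi^2\rb^{-2(1-b)}$, which is integrable), so Lemma \ref{lemma: Kato Smoothing} applies directly. The genuinely new piece is
\begin{align*}
M(x,t) = \eta(t)\int e^{ix\xi+it\tau}\,\frac{\chi_{|\tau+\xi^2|>1}\,\hat N(\xi,\tau)}{i(\tau+\xi^2)}\,d\xi\,d\tau,
\end{align*}
whose time-Fourier transform at fixed $x$ equals
\begin{align*}
\mathcal F_t M(x,\lambda) = \int \hat\eta(\lambda-\tau)\int\frac{e^{ix\xi}\chi_{|\tau+\xi^2|>1}\hat N(\xi,\tau)}{i(\tau+\xi^2)}\,d\xi\,d\tau.
\end{align*}

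The main obstacle is bounding $\|\lb\lambda\rb^{(2s+1)/4}\mathcal F_t M(x,\cdot)\|_{L^2_\lambda}$ uniformly in $x$, and the argument naturally bifurcates by the size of $|\lambda|$ relative to $\lb\xi\rb^2$. In the regime $|\lambda|\lesssim \lb\xi\rb^2$ I can absorb $\lb\lambda\rb^{(2s+1)/4}$ into $\lb\xi\rb^{s+1/2}$; then a Cauchy--Schwarz in $\xi$ using the standard Kato weight $\lb\xi\rb^{-1}$, together with Schwartz decay of $\hat\eta$ in $\lambda-\tau$ and the integrability of $\lb\tau+\xi^2\rb^{2b-2}$ forced by $b<\tfrac12$, recovers $\|N\|_{X^{s,-b}}$. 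In the complementary regime one has $(\xi,\tau)\in R$ on the effective $\hat\eta$-support and $\lb\tau+\xi^2\rb\sim\lb\lambda\rb$, so
\begin{align*}
\frac{\lb\lambda\rb^{(2s+1)/4}}{\lb\tau+\xi^2\rb}\sim \lb\tau+\xi^2\rb^{(2s-3)/4},
\end{align*}
and a triangle inequality in $\xi$ followed by Young's inequality in the $\lambda-\tau$ convolution produces exactly the stated correction term. For $0\leq s\leq \frac12$, a final Cauchy--Schwarz in $\xi$ absorbs this correction into $\|N\|_{X^{s,-b}}$, since the resulting factor $\bigl(\int_R \lb\xi\rb^{-2s}d\xi\bigr)^{1/2}$ grows only slowly enough in $\lb\tau\rb$ to be balanced against the extra $\lb\tau+\xi^2\rb^{2b}$ weight; for $s>\frac12$ this absorption fails and the correction must be kept, which gives the stated dichotomy.
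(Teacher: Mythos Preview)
The paper does not prove this lemma; it is quoted directly from \cite[Proposition 3.7]{ET2}. Your outline is the standard argument used there and in \cite{ET}: Fourier expansion of the Duhamel term, low/high modulation split with Taylor expansion on the low piece, reduction of the $e^{-it\xi^2}$ high-modulation piece to the free Kato smoothing, and a direct time-Fourier analysis of the remaining piece $M(x,t)$ with a further split according to whether $|\tau|\lesssim\lb\xi\rb^2$ or $(\xi,\tau)\in R$. The absorption of the correction term into $\|N\|_{X^{s,-b}}$ for $0\le s\le \tfrac12$ via Cauchy--Schwarz in $\xi$ (yielding a factor $\lb\tau\rb^{2b-1}$ or $\lb\tau\rb^{2b-1}\log\lb\tau\rb$) is exactly right, as is the observation that this fails for $s>\tfrac12$.

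Two minor points: in the Taylor expansion the $1/n!$ is already in the series, so the bound you want is $\|\psi_n\|_{H^s}\lesssim\|N\|_{X^{s,-b}}$ uniformly in $n$, with $\|t^n\eta(t)\|_{H^{(2s+1)/4}_t}\lesssim C^n$ supplying the convergent geometric factor; and in the $|\lambda|\lesssim\lb\xi\rb^2$ case it is cleaner to first use $\lb\lambda\rb\lesssim\lb\lambda-\tau\rb\lb\tau\rb$ and Young in the $\hat\eta$-convolution, then Cauchy--Schwarz in $\xi$ against $\lb\xi\rb\lb\tau+\xi^2\rb^{2b-2}$ (which is integrable in $\xi$ by the substitution $u=\xi^2$, since $2-2b>1$). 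With those clarifications your argument is complete and matches the cited proof.
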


\begin{lem}[{\cite[Proposition 4.8]{CT}}] \label{lemma: KdV Correction}
Fix $b \in (0,\frac12)$. Let $R$ be the set given by $R = \bigl\{ (\xi,\tau) \; : \; |\tau| \geq 1 \quad \text{and} \quad |\tau | \gg |\xi|^3\bigr\}$. Then we have
\begin{multline*}
  \left\| \eta(t) \int_0^t W^{t-t'}_{\R}N(t')d t'  \right\|_{C_x H^\frac{k+1}{3}_t (\R \times \R)} 
\\ \lesssim
\begin{cases}
\| N \|_{\t X^{k,-b}} & -1 \leq k \leq 2 - 3b, \\
\| N \|_{\t X^{k,-b}} + \left\| \int \chi_R(\xi, \tau) \lb \tau - \xi^3 \rb^{\frac{k-2}{3}} | \hat{N}(\xi,\tau)| d \xi \right\|_{L^2_\tau} &\qquad \;\;k > 2-3b.
\end{cases}
\end{multline*} 
\end{lem}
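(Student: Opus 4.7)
The plan is to prove the estimate via a Fourier-space decomposition of the Duhamel integral, reducing most pieces to the free-solution Kato smoothing estimate (Lemma \ref{lemma: Kato Smoothing}) and isolating a single residual term whose bound produces the correction on $R$. To begin, I would write
\begin{equation*}
\int_0^t W^{t-t'}_\R N(t')\,dt'\;=\; c\iint e^{ix\xi+it\xi^3}\,\frac{e^{it(\tau-\xi^3)}-1}{i(\tau-\xi^3)}\,\hat N(\xi,\tau)\,d\xi\,d\tau,
\end{equation*}
multiply by $\eta(t)$, and split the kernel by a smooth cutoff $\phi$ on $\{|\tau-\xi^3|\leq 1\}$. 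On the low-modulation piece, Taylor-expanding $e^{it(\tau-\xi^3)}$ produces a convergent series of terms of the form $\eta(t)\,t^{n+1}\,W^t_\R\psi_n$, each handled by Lemma \ref{lemma: Kato Smoothing}, with $\|\psi_n\|_{H^k_x}$ bounded by Cauchy-Schwarz in $\tau$ on the compact modulation set. On the high-modulation piece, I would split the numerator $e^{it(\tau-\xi^3)}-1$ into two parts; the $-1$ part (together with the $e^{ix\xi+it\xi^3}$ factor) is a genuine free Airy evolution whose initial data has $H^k_x$-norm dominated by $\|N\|_{\t X^{k,-b}}$ via Cauchy-Schwarz in $\tau$ against the integrable weight $\lb\tau-\xi^3\rb^{2b-2}$ (convergent since $b<\tfrac12$), so Lemma \ref{lemma: Kato Smoothing} applies again.

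The remaining term is
\begin{equation*}
T(x,t)=\eta(t)\iint e^{ix\xi+it\tau}\,\frac{(1-\phi(\tau-\xi^3))\,\hat N(\xi,\tau)}{i(\tau-\xi^3)}\,d\xi\,d\tau,
\end{equation*}
whose space-time Fourier support lies along $\sigma=\tau$ rather than the Airy curve $\sigma=\xi^3$, so Lemma \ref{lemma: Kato Smoothing} does not apply directly. I would split $T=T_{R^c}+T_R$ using $\chi_R$. On $R^c$ we have $\lb\tau\rb\lesssim\lb\xi\rb^3$, and after taking the time Fourier transform, using the rapid decay of $\hat\eta$, one has $\lb\sigma\rb^{(k+1)/3}\lesssim\lb\xi\rb^{k+1}$; a dyadic decomposition in $\xi$ together with Cauchy-Schwarz in $\tau$ against $\lb\tau-\xi^3\rb^{2b-2}$ yields the bound $\|N\|_{\t X^{k,-b}}$. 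The threshold $k\leq 2-3b$ emerges when one balances $\lb\xi\rb^{2(k+1)}$ against $\lb\xi\rb^{2k}\lb\tau-\xi^3\rb^{-2b}$ on the support $|\tau|\lesssim|\xi|^3$.

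On $R$ we have $\lb\tau\rb\sim\lb\tau-\xi^3\rb$, and the factor $\lb\sigma\rb^{(k+1)/3}/|\tau-\xi^3|$ collapses to $\lb\tau-\xi^3\rb^{(k-2)/3}$ once the $\hat\eta$-convolution localizes $\sigma\sim\tau$. Passing $\sup_x$ through the $\xi$-integral by the triangle inequality produces exactly the correction $\bigl\|\int\chi_R\lb\tau-\xi^3\rb^{(k-2)/3}|\hat N(\xi,\tau)|\,d\xi\bigr\|_{L^2_\tau}$. When $k\leq 2-3b$ one has $(k-2)/3\leq -b$, so this correction is absorbed into $\|N\|_{\t X^{k,-b}}$ by Cauchy-Schwarz in $\xi$, recovering the clean estimate and explaining the dichotomy in the statement.

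The main obstacle I expect is the simultaneous control of $\sup_x$ and the $H^{(k+1)/3}_t$-norm in the low-regularity range. A naive Sobolev embedding $H^{\frac12+}_x\hookrightarrow L^\infty_x$ would cost half a derivative in $x$ and force $k\geq\tfrac12$, which is incompatible with the stated range $k\geq -1$. Overcoming this requires a careful dyadic decomposition in $\xi$ with summable losses at each scale, exploiting the Airy scaling to balance the $\lb\tau-\xi^3\rb^{-b}$ weight against $\lb\xi\rb^k$ so that no spatial derivatives are actually lost.
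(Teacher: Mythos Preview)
The paper does not prove this lemma; it is quoted from \cite[Proposition 4.8]{CT}. Your decomposition --- low/high modulation split, Taylor expansion on the low-modulation piece, splitting the numerator $e^{it(\tau-\xi^3)}-1$ on high modulation, and the $R/R^c$ dichotomy for the residual term $T$ --- is exactly the standard scheme used there and in the Schr\"odinger analog \cite[Proposition 3.7]{ET2}.

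There is, however, a genuine gap in your handling of $T_{R^c}$. You correctly flag that passing $\sup_x$ through the $\xi$-integral costs half a derivative, but the proposed dyadic remedy is not carried out and does not close as described: on a block $|\xi|\sim 2^j$ a naive Cauchy--Schwarz in $\xi$ still loses $2^{j/2}$, and the constraint $|\tau-\xi^3|\lesssim|\xi|^3$ alone does not recover it. The clean fix is to observe that $T_{R^c}=\eta(t)S$ with $\hat S=\chi_{R^c}(\tau-\xi^3)^{-1}\hat N$, so that $\|S\|_{\t X^{k,1-b}}\lesssim\|N\|_{\t X^{k,-b}}$; since $1-b>\tfrac12$, the Kato smoothing $\|\eta S\|_{C_xH^{(k+1)/3}_t}\lesssim\|S\|_{\t X^{k,1-b}}$ (obtained from Lemma~\ref{lemma: Kato Smoothing} by the usual $X^{s,b}$ transference) finishes this piece for every $k\geq -1$, with no upper threshold. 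Equivalently, after reducing to $\sup_x\bigl\|\int e^{ix\xi}\lb\xi\rb^{k+1}\lb\tau-\xi^3\rb^{-1}\hat N\,d\xi\bigr\|_{L^2_\tau}$, apply Cauchy--Schwarz in $\xi$ and change variables $\mu=\xi^3$: the Jacobian $3\xi^2$ exactly cancels the $\lb\xi\rb^2$ excess, and $\int\lb\tau-\mu\rb^{2b-2}d\mu<\infty$ since $b<\tfrac12$. Consequently your claim that the threshold $k\leq 2-3b$ ``emerges'' from the $R^c$ balancing is misplaced: the $R^c$ piece imposes no upper bound on $k$. The threshold arises solely from absorbing the $R$ correction into $\|N\|_{\t X^{k,-b}}$, exactly as you correctly say in your final sentence.
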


We need the following estimates to apply Lemmas \ref{lemma: inhomogeneous}, \ref{lemma: NLS Correction} and \ref{lemma: KdV Correction} to the nonlinear terms in (\ref{eq: SKdV}). Proofs are in Section \ref{section: proofs of estimates}.
\begin{prop} \label{estimate:uv}
Let $s<k+2$, $k > -1$ and $s + k > -1$. Then for any 
\begin{align*}
\begin{array}{lr}
        0\leq a <\frac12 \min\left( k-s+2,k+1, 1\right), & \text{if } s \geq 0,\\
        0\leq a <\frac12\min\left( s+k+\frac32,k+1, 1\right), & \text{if } s<0,
\end{array}
\end{align*}
there exists $\epsilon >0$ such that for $\frac{1}{2}-\epsilon < b <\frac{1}{2}$, we have 
\[ \| uv \|_{X^{s+a,-b}} \lesssim \| u \|_{X^{s,b}} \|v\|_{\t X^{k,b}}.\]
\end{prop}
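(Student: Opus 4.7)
The plan is to dualize the $X^{s+a,-b}$ norm into a trilinear integral bound, then perform a Littlewood--Paley decomposition combined with a case analysis on frequency interactions. Writing $\hat{u} = f/(\lb\xi\rb^s \lb\sigma_1\rb^b)$ and $\hat{v} = g/(\lb\xi\rb^k \lb\sigma_2\rb^b)$ with $f,g\in L^2$, and testing against $h/\lb\sigma_0\rb^b$ with $\|h\|_2\leq 1$, Plancherel reduces the estimate to
\[
\iiiint \frac{\lb\xi_1+\xi_2\rb^{s+a}\,f(\xi_1,\tau_1)\,g(\xi_2,\tau_2)\,h(\xi_1+\xi_2,\tau_1+\tau_2)}{\lb\xi_1\rb^s \lb\xi_2\rb^k \lb\sigma_0\rb^b \lb\sigma_1\rb^b \lb\sigma_2\rb^b}\, d\xi_1\, d\tau_1\, d\xi_2\, d\tau_2 \lesssim \|f\|_2\|g\|_2\|h\|_2,
\]
where $\sigma_1 = \tau_1 + \xi_1^2$, $\sigma_2 = \tau_2 - \xi_2^3$, and $\sigma_0 = \tau_1+\tau_2 + (\xi_1+\xi_2)^2$. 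The key algebraic identity is the resonance relation
\[
\sigma_0 - \sigma_1 - \sigma_2 = \xi_2\bigl(\xi_2^2 + \xi_2 + 2\xi_1\bigr),
\]
which yields $\max_j\lb\sigma_j\rb \gtrsim |\xi_2|\,|\xi_2^2+\xi_2+2\xi_1|$. This is the sole source of nonlinear smoothing, and the allowed value of $a$ is dictated by how efficiently it can be traded against the symbol.

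Next I would dyadically decompose $\lb\xi_1\rb\sim N_1$, $\lb\xi_2\rb\sim N_2$, $\lb\xi_1+\xi_2\rb\sim N$, and $\lb\sigma_j\rb\sim L_j$, and split into the standard sub-cases: (a) $N_2\lesssim 1$ (low Airy frequency: $\lb\xi_2\rb^k$ is harmless); (b) $N_1\ll N_2$ with $N\sim N_2$ (here $\max_j L_j \gtrsim N_2^3$); (c) $N_2\ll N_1$ with $N\sim N_1$ (here $\max_j L_j\gtrsim N_1 N_2$); and (d) $N_1\sim N_2$, either non-resonantly with $N\sim N_1$ or as a ``high$\times$high$\to$low'' interaction with $N\ll N_1$ and $\max_j L_j\gtrsim N_1^2$. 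In each sub-case the argument is the same: trade the surplus modulation $(\max_j L_j)^{1/2-b}$ (an $\varepsilon$-quantity since we work with $b$ close to $\tfrac12$) against the symbol $\lb\xi_1+\xi_2\rb^{s+a}/(\lb\xi_1\rb^s\lb\xi_2\rb^k)$, then close by Cauchy--Schwarz in the non-extremal frequency/modulation variables using the elementary bound $\int \lb\sigma\rb^{-2b}\,d\sigma \lesssim 1$ valid for $b>\tfrac14$.

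The bounds on $a$ in the statement match these cases in a transparent way: $a<\tfrac{k+1}{2}$ comes from case (b); the universal bound $a<\tfrac12$ from the restriction $b<\tfrac12$ together with the non-resonant part of case (d); and $a<\tfrac{k-s+2}{2}$ from the high-high-to-low branch of case (d), where the symbol $\lb\xi_1+\xi_2\rb^{s+a}/(\lb\xi_1\rb^s\lb\xi_2\rb^k) \sim N^{s+a}N_1^{-s-k}$ must be absorbed by the weaker resonance gain $\sim N_1^2$. The main obstacle, and the reason the statement bifurcates at $s=0$, is case (b) in the regime $s<0$: there $\lb\xi_1\rb^{-s}$ becomes a growing weight on the ``low'' Schr\"odinger frequency, so one cannot freely integrate it against $f(\xi_1,\tau_1)$. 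One must instead perform the $\tau_1$-integration first against $\lb\sigma_1\rb^{-2b}$, then restrict the $\xi_1$-support via the resonance constraint, paying an extra power of $N_2$ in the residual Cauchy--Schwarz that exactly degrades $\tfrac{k-s+2}{2}$ to $\tfrac{s+k+3/2}{2}$. This delicate redistribution, together with the book-keeping needed to ensure the $L^2_\xi L^1_\tau$ part of the $Y^{s,b}$-norm is also controlled, will be the technical heart of the argument.
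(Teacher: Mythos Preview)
Your duality step and the resonance identity are correct and match the paper's starting point. The high-level plan of a frequency case analysis is also right. But the mechanism you describe for closing each block is wrong, and without fixing it you will not reach the stated range of $a$.

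First, the sentence ``close by Cauchy--Schwarz \dots using the elementary bound $\int\lb\sigma\rb^{-2b}\,d\sigma\lesssim 1$ valid for $b>\tfrac14$'' is false: that integral diverges for every $b\le\tfrac12$, which is exactly the regime you are in. Relatedly, trading only the ``surplus'' $(\max_jL_j)^{1/2-b}$ gains an $\varepsilon$, which cannot produce smoothing of order up to $\tfrac12$. What is actually needed is to spend the full weight $\lb\sigma_{j^\ast}\rb^{b}$ on the maximal modulation and then close the remaining two weights either by the calculus lemma (Lemma~\ref{CalcLem}), which converts $\int\lb\sigma_1\rb^{-2b}\lb\sigma_2\rb^{-2b}\,d\tau\lesssim\lb\text{difference}\rb^{-(4b-1)}$, or by a genuine bilinear estimate. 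The paper does both: in several regions it reduces by Cauchy--Schwarz to the scalar quantity
\[
\sup_{\xi}\int\frac{\lb\xi\rb^{2s+2a}}{\lb\xi-\xi_2\rb^{2s}\lb\xi_2\rb^{2k}\lb\xi_2(\xi_2^2-\xi_2+2\xi)\rb^{4b-1}}\,d\xi_2,
\]
and in the remaining region it invokes Lemma~\ref{lem: interpolation}, i.e.\ Wu's bilinear gains $\lb 3\xi_2^2\pm 2\xi_1\rb^{1/2-}$ and $\lb\xi\rb^{1/2-}$ interpolated down to $b=\tfrac12-$. This second ingredient is what your proposal is missing, and it is not optional: the constraint $a<\tfrac{k-s+2}{2}$ arises precisely when the maximal modulation sits on the Airy factor and one must then use the Schr\"odinger--Schr\"odinger bilinear gain $\lb\xi_2\rb^{1/2-}$ from Lemma~\ref{lem: interpolation} to close. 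It does \emph{not} come from the high--high$\to$low interaction (there the resonance is $\sim N_1^3$, not $N_1^2$, and the binding constraint is $s+k>-1$).

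Two smaller points: your case (d) resonance size should be $N_1^3$, and the remark about the $L^2_\xi L^1_\tau$ part of $Y^{s,b}$ is misplaced---Proposition~\ref{estimate:uv} is an $X^{s+a,-b}$ estimate only; the $Y$-norm enters later through Lemma~\ref{lemma: inhomogeneous}.
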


\begin{prop} \label{estimate:|u|^2u}
Let $s \geq 0$. For $0 \leq a \leq \min( 2s, \frac12)$, there exists $\epsilon >0$ such that for $\frac{1}{2}-\epsilon < b <\frac{1}{2}$, we have
\[ \| u_1 \overline{u_2} u_3 \|_{X^{s+a, -b}} \lesssim \| u_1 \|_{X^{s,b}}\| u_2\|_{X^{s,b}} \| u_3\|_{X^{s,b}}.\]	
\end{prop}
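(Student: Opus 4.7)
The strategy is to prove the estimate by duality, followed by the standard dyadic decomposition and resonance analysis for cubic Schr\"odinger. By duality, the proposition is equivalent to showing
\[
\left|\iint_{\R^2} u_1 \bar{u}_2 u_3 \bar{w}\, dx\, dt\right| \lesssim \prod_{j=1}^{3}\|u_j\|_{X^{s,b}}\, \|w\|_{X^{-s-a,b}}.
\]
Passing to Fourier variables and relabelling each factor as a nonnegative $L^2$ density after absorbing all weights, the problem reduces to the boundedness of the multilinear form with multiplier
\[
M = \frac{\lb\xi\rb^{s+a}}{\lb\xi_1\rb^s \lb\xi_2\rb^s \lb\xi_3\rb^s\, \lb\sigma\rb^b \prod_{j=1}^{3} \lb\sigma_j\rb^b}
\]
on the constraint set $\xi = \xi_1 - \xi_2 + \xi_3$, $\tau = \tau_1 - \tau_2 + \tau_3$, where $\sigma = \tau + \xi^2$ and $\sigma_j = \tau_j + \xi_j^2$.

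The algebraic input is the resonance identity
\[
\sigma_1 - \sigma_2 + \sigma_3 - \sigma = 2(\xi_1-\xi_2)(\xi_2-\xi_3),
\]
which forces $\max(\lb\sigma\rb,\lb\sigma_1\rb,\lb\sigma_2\rb,\lb\sigma_3\rb) \gtrsim |\xi_1-\xi_2|\,|\xi_2-\xi_3|$. I would then dyadically decompose in frequencies $N_j \sim \lb\xi_j\rb$, $N \sim \lb\xi\rb$ and modulations $L_j \sim \lb\sigma_j\rb$, $L \sim \lb\sigma\rb$, and split into cases according to the frequency configuration. The worst case to balance is high-high-to-low, $N_1 \sim N_2 \gg N_3 \sim N$, where the frequency part of the multiplier is of order $N_1^{a-2s}$; this is precisely the source of the constraint $a \leq 2s$. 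The remaining configurations (high-high-to-high, and the symmetric variants obtained by swapping $u_1$ and $u_3$) are handled by using the modulation gain supplied by the resonance identity.

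On each dyadic block, after Cauchy--Schwarz one is reduced to a bilinear Schr\"odinger $L^2$ estimate. The main tools are the $L^6$ Strichartz bound $\|u\|_{L^6_{t,x}} \lesssim \|u\|_{X^{0,1/2+}}$ and the refined bilinear estimate
\[
\|uv\|_{L^2_{t,x}} \lesssim |\xi_1 - \xi_2|^{-1/2}\|u\|_{X^{0,1/2+}}\|v\|_{X^{0,1/2+}}
\]
(valid when the two inputs are supported on separated Schr\"odinger-adapted frequencies). The $1/2-b$ loss relative to these $b=1/2+$ estimates is to be absorbed by a fraction of the modulation weight, paid for out of the resonance identity; this gives rise to the constraint $a \leq 1/2$, as $1/2$ is the maximum smoothing available from a single full power of the resonance factor.

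The main obstacle is precisely the restriction $b < \tfrac12$: the Strichartz $L^6$ bound is not available directly at this regularity, so one cannot simply apply H\"older and Strichartz as in the standard cubic NLS argument. Instead, the proof must peel off a small exponent $\delta > 0$ from each modulation weight in order to reinstate an $X^{0,1/2+}$-type estimate, and then use the remaining modulation exponents together with the resonance identity to recover both the frequency-separation gain and the smoothing $a$. Tracking these $\delta$-losses in the dyadic summation and ensuring summability in the mixed high-low configurations, where the resonance gain is weakest, will be the most delicate book-keeping in the argument.
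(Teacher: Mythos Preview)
Your outline is sound and would yield a self-contained proof, but the paper takes a much shorter route. For $s>0$ it simply cites \cite{ET}, where this smoothing estimate for the cubic Schr\"odinger nonlinearity is already established. For $s=0$ the constraint $a\le 2s$ forces $a=0$, so there is no smoothing to extract; the estimate then follows in one line from H\"older's inequality and the $L^4$ Strichartz bound $\|u\|_{L^4_{t,x}}\lesssim \|u\|_{X^{0,3/8+}}$ (interpolate $L^6$ Strichartz against Plancherel, as in \eqref{eq: Strichartz interpolation}), which applies directly since $b$ is taken close to $\tfrac12$.

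Your dyadic decomposition and resonance analysis is essentially what \cite{ET} carries out, so the approaches are not in conflict. One comment: you flag the restriction $b<\tfrac12$ as the main obstacle and propose to peel off $\delta$-powers of modulation, tracking the losses through the dyadic sum. For the baseline (no-smoothing) portion this is unnecessary, since $L^4$ is already subcritical in $b$. For the smoothing portion, a cleaner device---used elsewhere in this paper, see Lemma~\ref{lem: interpolation}---is to interpolate the $b=\tfrac12+$ bilinear refinement against the $b=\tfrac38+$ estimate once, producing a $b=\tfrac12-$ bilinear bound that already carries the multiplier gain $\lb\xi\rb^{1/2-}$; this compresses the $\delta$-bookkeeping you anticipate into a single interpolation step.
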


\begin{prop} \label{estimate:vv_x}
   For $k>-\frac{3}{4}$ and $\alpha>\frac{1}{2}$, there exists $\epsilon >0$ such that for any $\frac{1}{2}-\epsilon<b<\frac{1}{2}$, we have
    \begin{align*}
    \| \partial_{x}(v_{1}v_{2}) \|_{\t X^{k,-b}} \lesssim \| v_{1} \|_{\t X^{k,b}_{\alpha}}\| v_{2} \|_{\t X^{k,b}_{\alpha}}.
    \end{align*}
\end{prop}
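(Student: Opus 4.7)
The plan is to reduce to a weighted trilinear integral on the Fourier side via duality, and then exploit the KdV resonance identity through a dyadic decomposition of the frequency and modulation variables. Pairing $\partial_x(v_1 v_2)$ against a dual test function $w \in \t X^{-k,b}$ with unit norm, Plancherel reduces the estimate to
\[
\left| \int_{*} i\xi\, \hat v_1(\xi_1,\tau_1)\hat v_2(\xi_2,\tau_2)\overline{\hat w(\xi,\tau)}\, d\sigma \right| \lesssim \|v_1\|_{\t X^{k,b}_{\alpha}}\|v_2\|_{\t X^{k,b}_{\alpha}},
\]
integrated over the hyperplane $\xi_1+\xi_2=\xi$, $\tau_1+\tau_2=\tau$. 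The algebraic identity
\[
(\tau-\xi^3)-(\tau_1-\xi_1^3)-(\tau_2-\xi_2^3)=3\xi\xi_1\xi_2
\]
yields the crucial smoothing factor $\max(\lb\tau-\xi^3\rb,\lb\tau_1-\xi_1^3\rb,\lb\tau_2-\xi_2^3\rb)\gtrsim |\xi\xi_1\xi_2|$. I would split each $v_i$ into its high-frequency part $(|\xi_i|\geq 1)$ controlled by $\t X^{k,b}$ and its low-frequency part $(|\xi_i|<1)$ controlled by $D_{\alpha}$, and analyze the four bilinear interactions separately.

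For the \textbf{high-high interaction}, I would dyadically decompose with $N_i\sim \lb\xi_i\rb$ and $L_i\sim \lb\tau_i-\xi_i^3\rb$, place the factor of maximal $L_i$ in $L^2_{x,t}$, and estimate the remaining two using Strichartz $L^6_{x,t}$ bounds or, in the resonant subregion, the sharp Tao / Kenig--Ponce--Vega bilinear $L^2_{x,t}$ refinement for Airy waves. The resonance identity furnishes a gain of $|\xi\xi_1\xi_2|^{\frac12-\epsilon}$ that must absorb the weight $|\xi|\lb\xi\rb^{k}\prod_i \lb\xi_i\rb^{-k}$; the resulting dyadic sum converges exactly when $k>-\tfrac34$, which is the classical sharp KdV threshold and accounts for the hypothesis. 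The $\epsilon$-loss from taking $b<\tfrac12$ is absorbed by choosing $\epsilon$ small enough relative to the gain. For the \textbf{low-frequency interactions}, where (say) $|\xi_2|<1$, the pure $\t X^{k,b}$ estimate fails logarithmically as $b\to\tfrac12^{-}$; this is precisely the obstruction motivating the $D_{\alpha}$ correction of \cite{CK}. Here a Cauchy--Schwarz in $\tau_2$ using the weight $\lb\tau_2\rb^{\alpha}$, combined with the compactness of $|\xi_2|<1$, gives $\int |\hat v_2|\,d\xi_2 d\tau_2 \lesssim \|v_2\|_{D_{\alpha}}$ because $\int\lb\tau_2\rb^{-2\alpha}d\tau_2<\infty$ for $\alpha>\tfrac12$. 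This essentially places the low-frequency factor of $v_2$ in $L^{\infty}_{x,t}$, after which the residual pairing with the surviving factor is controlled either trivially (low-low case, where $|\xi|<2$ so no derivative loss occurs) or by a one-factor Kato-smoothing type inequality (high-low case), in either case without any restriction beyond $k>-1$.

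The main obstacle I anticipate is executing the high-high resonant summation at the $k>-\tfrac34$ endpoint, where the $L^6$ Strichartz bound is too lossy and the analysis requires the sharp bilinear refinement whose decay exponent barely closes the dyadic sum once one accounts for the $\epsilon$-loss in $b$. A secondary technical nuisance is patching the high- and low-frequency splittings across $|\xi_i|\sim 1$ via a smooth partition of unity so that the estimates combine consistently with the definition of $\t X^{k,b}_{\alpha}$, and verifying that the transition region is absorbed by both estimates with a uniform constant. Beyond these points, the argument is a careful bookkeeping of the cases described above.
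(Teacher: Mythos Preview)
Your outline is essentially correct and reconstructs the standard Colliander--Kenig/Holmer argument for this estimate. However, the paper itself does not give an independent proof: in Remark~\ref{rem: Trivial smoothings} the $a=0$ case of the KdV bilinear estimate is simply cited as \cite[Lemma~5.10(a)]{Holm1}, and the subsection in Section~\ref{section: proofs of estimates} headed ``Proof of Proposition~\ref{estimate:vv_x}'' actually contains the proof of Proposition~\ref{estimate:|u|^2u} (the cubic Schr\"odinger trilinear estimate)---note that its argument involves the parameter $s$ and a quadrilinear integral with Schr\"odinger phases, neither of which appear in Proposition~\ref{estimate:vv_x}. This appears to be a labeling slip in the paper.

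So the comparison is: the paper treats the result as known and defers to \cite{Holm1}, while you are re-deriving it. Your strategy---duality, the resonance identity $3\xi\xi_1\xi_2$, the high/low frequency split with the $D_\alpha$ norm absorbing the low-frequency factor via $\int \lb\tau\rb^{-2\alpha}d\tau<\infty$ when $\alpha>\tfrac12$, and dyadic/bilinear Strichartz analysis in the high-high region yielding the $k>-\tfrac34$ threshold---is exactly the content of the cited reference. If you want to match the paper, a one-line citation to \cite[Lemma~5.10(a)]{Holm1} (or to \cite{CK}, where the $D_\alpha$ modification originates) suffices; if you want a self-contained argument, what you have written is the right skeleton and the anticipated difficulties you flag (endpoint summation near $k=-\tfrac34$, smooth patching at $|\xi_i|\sim 1$) are genuine but handled in those references.
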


\begin{prop} \label{estimate:|u|^2_x}
Let $k<\min(4s,s+1)$. For $s \geq 0$ and $0 \leq a <\min(4s-k,s-k+1)$ or $-\frac14 <s<0$ and $0 \leq a <\min(4s-k,-k)$, there exists $\epsilon >0$ such that for $\frac{1}{2}-\epsilon<b<\frac{1}{2}$, we have
\[ \| \partial_{x}\left(u_1 \overline{u_2}\right) \|_{\t X^{k+a,-b}} \lesssim \| u_1 \|_{X^{s,b}} \|u_2\|_{X^{s,b}}. \]
\end{prop}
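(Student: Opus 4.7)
By duality and Plancherel, the claimed bound is equivalent to the quadrilinear estimate
\[
J := \int \frac{\lb \xi\rb^{k+a}\,|\xi|\, F(\xi_1,\tau_1)\, G(\xi_2,\tau_2)\, h(\xi,\tau)}{\lb \xi_1\rb^{s}\lb \xi_2\rb^{s}\,\lb \sigma_0\rb^{b}\lb \sigma_1\rb^{b}\lb \sigma_2\rb^{b}}\,d\xi_1 d\tau_1 d\xi d\tau \lesssim \|F\|_{L^2}\|G\|_{L^2}\|h\|_{L^2},
\]
where $\xi_2=\xi_1-\xi$, $\tau_2=\tau_1-\tau$, and the three modulations are $\sigma_0=\tau-\xi^3$, $\sigma_1=\tau_1+\xi_1^2$, $\sigma_2=\tau_2+\xi_2^2$. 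The first step is to expand these and derive the resonance identity
\[
\sigma_0-\sigma_1+\sigma_2 \;=\; -\xi\bigl(\xi^{2}+\xi_1+\xi_2\bigr),
\]
which supplies the fundamental modulation gain $\max(\lb \sigma_0\rb,\lb \sigma_1\rb,\lb \sigma_2\rb) \gtrsim \lb \xi\rb\,\lb \xi^{2}+\xi_1+\xi_2\rb$ on the bulk of the integration region.

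Next I would perform a dyadic decomposition in the three spatial frequencies and split into the standard Schr\"odinger--KdV interaction regimes: (a) output low frequency, $|\xi|\lesssim 1$, handled trivially since the $|\xi|$ factor from $\partial_x$ cancels the relevant singular contributions; (b) high--low, $|\xi_1|\sim|\xi|\gg|\xi_2|$ (and the symmetric case), in which $|\xi^2+\xi_1+\xi_2|\sim \xi^2$ and the resonance gives $\sigma_{\max}\gtrsim |\xi|^3$; (c) high--high--to--high, $|\xi|\sim|\xi_1|\sim|\xi_2|$; and (d) the critical high--high--to--low case $|\xi_1|\sim|\xi_2|\gg|\xi|$. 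In each regime, after bounding $\sigma_{\max}^{-b}$ by the resonance and applying Cauchy--Schwarz to pull out the two remaining modulation weights, the inner $(\xi_1,\tau_1)$-integral of $\lb \sigma_1\rb^{-2b}\lb \sigma_2\rb^{-2b}$ is evaluated using the standard convolution lemma in the Appendix (a calculus inequality of the form $\int \lb a-\tau\rb^{-2b}\lb \tau-c\rb^{-2b}d\tau\lesssim \lb a-c\rb^{1-4b}$ for $b$ close to $\tfrac12$), and the surviving spatial integral is estimated by direct $L^\infty$ bounds on the resulting multiplier. For $s\geq 0$, balancing $\lb \xi\rb^{k+a+1}$ against the derivatives supplied by $\lb \xi_1\rb^{s}\lb \xi_2\rb^s$ and the modulation gain produces the two constraints $a<4s-k$ (from case (d)) and $a<s-k+1$ (from case (b)), matching the hypothesis. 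For $-\tfrac14<s<0$ the weights $\lb \xi_i\rb^{s}$ are no longer summable into high frequencies, so the same bookkeeping gives instead $a<4s-k$ (still the binding constraint from case (d)) together with $a<-k$, the latter arising in case (b) where no regularity is gained from the low-frequency factor $\lb \xi_2\rb^{s}$.

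The main technical obstacle will be the high--high--to--low regime (d), which is exactly the reason the $\partial_x(v^2)$ bilinear estimate forces $k>-\tfrac34$ in Proposition \ref{estimate:vv_x}; here the same phenomenon dictates the sharp line $a<4s-k$. In that case the resonance only yields $\sigma_{\max}\gtrsim |\xi|\,|\xi_1|$ when $\xi_1,\xi_2$ have the same sign, which is the generic subcase since $|\xi|\ll|\xi_1|$. One must further split this regime according to which of the three modulations achieves the maximum: when $\sigma_0$ dominates we can integrate out $(\xi_1,\tau_1)$ first, while if $\sigma_1$ or $\sigma_2$ dominates one first integrates in $(\xi,\tau)$ using the change of variables $\mu=\tau-\xi^3$ to trade the KdV modulation for a one-dimensional spatial integral; a careful treatment of the Jacobian near the stationary points of $\xi\mapsto \xi^3$ is needed, and the threshold $a<4s-k$ emerges as the borderline where the resulting $\xi$-integral is still summable in the dyadic scale. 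The remaining cases are easier and follow the pattern in \cite{Wu} and \cite{CK}.
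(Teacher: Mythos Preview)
Your outline is broadly on the right track and shares the overall architecture of the paper's argument (duality, the resonance identity, case splitting, Cauchy--Schwarz combined with the calculus lemma).  There are, however, two substantive departures from the paper's proof, and one of them hides a genuine gap.

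\textbf{Different tool in the nonresonant regions.}  In the regions where the resonance factor is comparable to $|\xi|^{3}$ (your cases (b) and (c), the paper's Cases 2--3), the paper does \emph{not} proceed by pure Cauchy--Schwarz.  After absorbing the maximal modulation into the multiplier, it invokes Lemma~\ref{lem: interpolation}, an interpolated bilinear Strichartz estimate carrying the weight $\lb 3\xi^{2}\pm 2\xi_{j}\rb^{\frac12-}$ (or $\lb \xi\rb^{\frac12-}$).  This is how the constraint $a<s-k+1$ emerges cleanly at $b=\tfrac12-$.  Your three-way Cauchy--Schwarz with variable swapping can in principle recover the same range, but it is a different route and you would have to check carefully that no loss occurs when $b<\tfrac12$.

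\textbf{A gap in case (d).}  Your claim that in the high--high--to--low regime one always has $\sigma_{\max}\gtrsim |\xi|\,|\xi_{1}|$ is false on a nontrivial sub-region.  In your variables the resonance factor is $\xi^{2}+\xi_{1}+\xi_{2}=\xi^{2}-\xi+2\xi_{1}$, and for $\xi_{1}$ large of the dangerous sign this quadratic in $\xi$ has real roots at $\xi\sim \pm|\xi_{1}|^{1/2}$.  Near those roots the resonance collapses to $|\xi|\cdot O(1)\cdot|\xi_{1}|^{1/2}\sim|\xi_{1}|$, not $|\xi|\,|\xi_{1}|$; and since $|\xi_{1}|^{1/2}\ll|\xi_{1}|$, this sub-region sits squarely inside your case (d).  The paper handles this by an explicit root analysis (its Cases 4--6), splitting according to whether $|\xi-r_{\pm}|\le 1$ or not, where $r_{\pm}=\tfrac{-1\pm\sqrt{1+8\xi_{1}}}{2}$; it is precisely this near-resonant strip, together with the intermediate shell $|\xi|\sim|\xi_{1}|^{1/2}$, that produces the sharp constraint $a<4s-k$.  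Your remark about ``stationary points of $\xi\mapsto\xi^{3}$'' is off target: the delicate Jacobian issue is not for $\xi^{3}$ but for the resonance polynomial $\xi^{2}-\xi+2\xi_{1}$ in $\xi$.  Without isolating and treating this strip separately your case (d) argument does not close.
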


\begin{prop} \label{estimate:uv correction}
Let $N=uv$. For $\frac{1}{2}<s<\min(\frac{2}{3}k+\frac{13}{6},\frac{5}{2})$, $k>-1$ and $0\leq a <\min(\frac{2}{3}k-s+\frac{13}{6},\frac{5}{2}-s)$, there exists $\epsilon > 0$ such that for $\frac{1}{2}-\epsilon<b<\frac{1}{2}$, we have 
\begin{align*} 
	\left\Vert \int{\chi_{R}\lb \tau+\xi^{2}\rb}^{\frac{2(s+a)-3}{4}}|\hat{N}(\xi,\tau)|d\xi\right\Vert_{L^{2}_{\tau}} \lesssim \left\Vert u \right \Vert_{X^{s,b}} \left\Vert v\right\Vert_{\t X^{k,b}},
\end{align*}
where $R=\{(\xi,\tau):|\tau|\gg 1  \text{ and } |\tau| \gg |\xi|^{2} \}$.
\end{prop}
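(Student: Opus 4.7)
The approach is duality followed by a modulation case analysis in the spirit of the proof of Proposition \ref{estimate:uv}, with the added complication that the correction weight $\lb \tau \rb^{(2(s+a)-3)/4}$ must be traded against whichever factor dominates on the high-modulation region $R$. Writing $\hat N = \hat u \ast \hat v$ and setting $f(\xi,\tau) = \lb \xi \rb^s \lb \tau+\xi^2 \rb^b |\hat u(\xi,\tau)|$, $g(\xi,\tau) = \lb \xi \rb^k \lb \tau-\xi^3 \rb^b |\hat v(\xi,\tau)|$ so that $\|f\|_{L^2}=\|u\|_{X^{s,b}}$ and $\|g\|_{L^2}=\|v\|_{\t X^{k,b}}$, duality against $L^2_\tau$ reduces the estimate to showing
\[
\int \frac{\chi_R \, \lb \tau_1+\tau_2 \rb^{\frac{2(s+a)-3}{4}} \, h(\tau_1+\tau_2) \, f(\xi_1,\tau_1) \, g(\xi_2,\tau_2)}{\lb \xi_1 \rb^s \, L_1^b \, \lb \xi_2 \rb^k \, L_2^b} \, d\xi_1 \, d\xi_2 \, d\tau_1 \, d\tau_2 \lesssim \|f\|_{L^2} \|g\|_{L^2}
\]
for every nonnegative $h \in L^2_\tau$ with $\|h\|_{L^2}=1$, where $L_1 := \lb \tau_1+\xi_1^2 \rb$, $L_2 := \lb \tau_2-\xi_2^3 \rb$, and $\chi_R$ is supported on $\{|\tau_1+\tau_2|\gg 1,\ |\tau_1+\tau_2|\gg(\xi_1+\xi_2)^2\}$. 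Note that on $R$ the weight $\lb\tau_1+\tau_2\rb$ is comparable to $\lb(\tau_1+\tau_2)+(\xi_1+\xi_2)^2\rb$, so the correction weight is effectively the Schr\"odinger modulation of $N$.

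\textbf{Case split.} The central algebraic input is the resonance identity
\[
\tau_1+\tau_2 \;=\; (\tau_1+\xi_1^2)+(\tau_2-\xi_2^3)+(\xi_2^3-\xi_1^2),
\]
which on $R$ forces $\max(L_1,L_2,M)\gtrsim\lb\tau_1+\tau_2\rb$ with $M := \lb\xi_2^3-\xi_1^2\rb$. I split the integral into the three corresponding regions. In each case where some $L_j$ dominates, the correction weight is absorbed via $\lb\tau_1+\tau_2\rb^{\frac{2(s+a)-3}{4}}\lesssim L_j^{\frac{2(s+a)-3}{4}}$, leaving a bilinear estimate with a modified modulation exponent on $L_j$. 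Provided $b$ is close enough to $\frac{1}{2}$ and $s+a<\frac{5}{2}$, the combined modulation powers are strong enough to invoke the standard lemma in the Appendix to integrate the product of the two temporal modulation factors in $\tau_j$; the residual $\xi$-integral is then controlled using the constraint $|\xi_1+\xi_2|^2\ll\lb\tau\rb$ on $R$ together with the integrability of $\lb\xi\rb^{-2s}$ or $\lb\xi\rb^{-2k}$ on the relevant ranges.

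\textbf{Main obstacle: the resonant case.} The principal difficulty is the case $M\gtrsim\lb\tau_1+\tau_2\rb$ with $L_1,L_2\ll\lb\tau_1+\tau_2\rb$, in which the time modulations carry no spare power and the correction weight must be paid entirely through the spatial frequencies. From $|\xi_1+\xi_2|^2\ll M\approx|\xi_2^3-\xi_1^2|$ one deduces that either $|\xi_1|^2$ or $|\xi_2|^3$ must be comparable to $M$. In the subregion $|\xi_2|\gtrsim|\xi_1|$ (so $|\xi_2|^3\approx M$) one balances $M^{(2(s+a)-3)/4}\approx|\xi_2|^{3(2(s+a)-3)/4}$ against $\lb\xi_2\rb^k$; applying Cauchy--Schwarz in $(\xi_1,\tau_1)$ and changing variables $\xi_2\mapsto\xi_2^3-\xi_1^2$ via the Jacobian $3\xi_2^2$ produces the sharp constraint $a<\frac{2}{3}k-s+\frac{13}{6}$. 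The symmetric subregion $|\xi_1|\gtrsim|\xi_2|$ (so $|\xi_1|^2\approx M$), treated by the change of variables $\xi_1\mapsto\xi_2^3-\xi_1^2$ with Jacobian $2|\xi_1|$, yields the complementary constraint $a<\frac{5}{2}-s$. This resonant case is where the argument is most delicate, since the dispersive gain is minimal and the threshold balancing against the spatial weights is sharp.
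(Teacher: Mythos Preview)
Your route is genuinely different from the paper's and, at the level of strategy, is plausible; however the resonant subcase contains a false step. The paper does \emph{not} do a modulation case analysis here. Instead it applies a single Cauchy--Schwarz in $(\xi,\xi_1,\tau_1)$ to reduce the estimate to
\[
\sup_{\tau}\iiint \frac{\chi_R\,\lb\tau+\xi^2\rb^{s+a-\frac32}\,d\xi_1\,d\tau_1\,d\xi}{\lb\xi-\xi_1\rb^{2s}\lb\xi_1\rb^{2k}\lb(\tau-\tau_1)+(\xi-\xi_1)^2\rb^{2b}\lb\tau_1-\xi_1^3\rb^{2b}}<\infty,
\]
then uses $\chi_R\lb\tau+\xi^2\rb\approx\chi_R\lb\tau\rb$, integrates out $\tau_1$ with Lemma~\ref{CalcLem}, and finishes by two successive one-dimensional changes of variables $(\xi-\xi_1)^2\mapsto\rho$, $\xi_1^3\mapsto\eta$, each time applying Lemma~\ref{CalcLem}. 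The constraints $s+a<\frac52$ and $s+a<\frac23k+\frac{13}{6}$ drop out of the last step automatically. This avoids all case-splitting on which of $L_1,L_2,M$ dominates.

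The concrete gap in your sketch is the claim that in the resonant subregion $|\xi_1|\gtrsim|\xi_2|$ one has $|\xi_1|^2\approx M=|\xi_2^3-\xi_1^2|$. This is false: take $|\xi_1|=100$, $|\xi_2|=50$, so $|\xi_1|>|\xi_2|$ but $|\xi_2|^3=1.25\times10^5\gg|\xi_1|^2=10^4$, hence $M\approx|\xi_2|^3$. The correct dichotomy is $|\xi_2|^3\gtrsim|\xi_1|^2$ versus $|\xi_2|^3\ll|\xi_1|^2$, and even then one must handle the near-resonant strip $|\xi_2|^3\approx|\xi_1|^2$ where $M$ can be much smaller than either. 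Once this is repaired your change-of-variables idea should give the same thresholds, but as written the second subcase does not close. Your $L_1,L_2$-dominant cases are also sketched too loosely: after absorbing the correction weight into the dominant modulation you still have a three-variable integral with only one surviving $L_j^{-b}$ and the $\lb\xi_1\rb^{-s}\lb\xi_2\rb^{-k}$ weights, and the claim that this is controlled ``by integrability of $\lb\xi\rb^{-2k}$'' is problematic for $k\le\frac12$, which the proposition allows. The paper's approach sidesteps all of this by never separating the three factors.
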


\begin{prop}[{\cite[Proposition 3.6]{ET}}] \label{estimate:|u|^2u correction}
Let $N = u_1\overline{u_2} u_3$. For $\frac12 < s <\frac52$ and $0 \leq a < \min(2s, \frac{5}{2}-s, \frac12)$, there exists $\epsilon >0$ such that for $\frac{1}{2}-\epsilon < b <\frac{1}{2}$, we have 
\[\left\| \int \lb \tau + \xi^2 \rb^{\frac{2(s+a)-3}4} |\hat{N}(\xi,\tau)| d \xi \right\|_{L^2_\tau} \lesssim \| u_1\|_{X^{s,b}}\| u_2\|_{X^{s,b}}\| u_3\|_{X^{s,b}}.\]
\end{prop}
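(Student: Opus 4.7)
The plan is to reduce the estimate, via Plancherel and duality, to a weighted multilinear integral on the Fourier side, and then to run a Littlewood--Paley case analysis on the dominant modulation. Writing $N = u_1\overline{u_2}u_3$ and using $\hat{\overline{u_2}}(\xi,\tau) = \overline{\hat u_2(-\xi,-\tau)}$, after a harmless change of variables in the convolution one has
\[\hat N(\xi,\tau) = \int \hat u_1(\xi_1,\tau_1)\,\overline{\hat u_2(\xi_2,\tau_2)}\,\hat u_3(\xi_3,\tau_3)\,d\mu,\]
where $d\mu$ enforces $\xi_1 - \xi_2 + \xi_3 = \xi$ and $\tau_1 - \tau_2 + \tau_3 = \tau$. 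Set $f_j(\xi_j,\tau_j) = \lb\xi_j\rb^s \lb\tau_j+\xi_j^2\rb^b |\hat u_j(\xi_j,\tau_j)|$, so that $\|f_j\|_{L^2} = \|u_j\|_{X^{s,b}}$, and let $L_0 = \tau+\xi^2$, $L_j = \tau_j + \xi_j^2$. Dualizing against $g \in L^2_\tau$ with $\|g\|_{L^2} \leq 1$, and noting that this estimate is invoked only on the region $R$ of Lemma~\ref{lemma: NLS Correction}, the claim reduces to bounding
\[\iint g(\tau)\,\chi_R(\xi,\tau)\,\lb L_0\rb^\theta \int \prod_{j=1}^3 \frac{f_j(\xi_j,\tau_j)}{\lb\xi_j\rb^s \lb L_j\rb^b}\,d\mu\,d\xi\,d\tau \lesssim \prod_j \|f_j\|_{L^2},\]
where $\theta = \tfrac{2(s+a)-3}{4} < \tfrac12$.

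The algebraic input is the resonance identity
\[L_0 - L_1 + L_2 - L_3 = 2(\xi-\xi_1)(\xi-\xi_3),\]
which forces $\max_{0\leq j\leq 3}|L_j| \gtrsim |(\xi-\xi_1)(\xi-\xi_3)|$. On $R$ we also have $\lb L_0\rb \approx \lb\tau\rb$ with $|\tau| \gg |\xi|^2$, so at least one modulation is genuinely large. I would then decompose dyadically in the scales $\lb\xi_j\rb \sim N_j$ and $\lb L_j\rb \sim \Lambda_j$ and split into four subcases depending on which $L_j$ is dominant. In the three \emph{nonresonant} subcases $j\in\{1,2,3\}$, one has $\lb L_0\rb \lesssim \lb L_j\rb$, so the correction weight $\lb L_0\rb^\theta$ can be absorbed into $\lb L_j\rb^{-b}$ at a cost of $\lb L_j\rb^{\theta-b}$ with $\theta - b < 0$; the resulting estimate is essentially the trilinear $X^{s,b}$ bound of Proposition~\ref{estimate:|u|^2u}, handled by Cauchy--Schwarz in the modulation variables and a volume estimate on the convolution surface.

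The resonant subcase, where $L_0$ itself is the dominant modulation, is the principal difficulty. Here one uses the resonance identity to bound $\lb L_0\rb^\theta \lesssim \lb \xi-\xi_1\rb^\theta \lb\xi-\xi_3\rb^\theta$ and redistributes these factors among the $\lb\xi_j\rb^s$ weights; the constraints $a < 2s$ and $a < \tfrac12$ permit this redistribution without exhausting any denominator, while $s > \tfrac12$ ensures that the low-frequency dyadic sums converge. The endpoint $s+a<\tfrac52$ is sharp in the extremal configuration where one frequency is of order $|\tau|^{1/2}$ and the remaining ones are bounded, and here the argument must exploit the strict inequality $\theta < \tfrac12$ together with the $\lb L_j\rb^{-b}$ factors to close a geometric sum in the dyadic parameters. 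Since this estimate is stated as \cite[Proposition 3.6]{ET}, my plan amounts to transcribing their case analysis into the present notation.
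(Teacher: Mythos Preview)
The paper does not give its own proof of this proposition; it is quoted verbatim from \cite[Proposition 3.6]{ET}. The method there, and the method the present paper uses for the companion correction estimates (Propositions \ref{estimate:uv correction}, \ref{estimate:vv_x correction}, \ref{estimate:|u|^2_x correction} in Sections \ref{proof: uv correction}--\ref{proof: |u|^2_x correction}), is quite different from yours. Rather than a dyadic modulation decomposition, one applies Cauchy--Schwarz immediately in the convolution variables to reduce to showing that a scalar quantity of the form
\[
\sup_{\tau}\iiint \frac{\chi_R(\xi,\tau)\,\lb\tau+\xi^2\rb^{s+a-\frac32}\,d\xi_1\,d\tau_1\,d\xi}{\lb\xi-\xi_1\rb^{2s}\lb\xi_1\rb^{2s}\lb\cdots\rb^{2b}\lb\cdots\rb^{2b}}
\]
is finite, and then evaluates the iterated integral by repeated use of the calculus Lemma \ref{CalcLem} together with the observation $\chi_R\lb\tau+\xi^2\rb \approx \chi_R\lb\tau\rb$. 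This route is shorter, requires no case split on the dominant modulation, and makes the role of the thresholds $s>\frac12$, $s+a<\frac52$ transparent as convergence conditions for one--dimensional integrals.

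Your outline is in the right spirit but has a gap you should be aware of. In the ``nonresonant'' subcases you claim the estimate reduces to Proposition \ref{estimate:|u|^2u}. It does not: the correction norm carries an \emph{inner} integration in $\xi$ before the $L^2_\tau$ norm, so after dualizing you are pairing against $g(\tau)\chi_R(\xi,\tau)$, which is not $L^2_{\xi,\tau}$-normalized (its $L^2_{\xi,\tau}$ norm is $\approx \|\,|\tau|^{1/4}g\,\|_{L^2_\tau}$ on $R$). Absorbing $\lb L_0\rb^\theta$ into the dominant $\lb L_j\rb^{-b}$ and invoking the standard trilinear bound therefore loses exactly a $|\tau|^{1/4}$, and you must recover it from the remaining modulation weights and from the restriction $|\xi|\ll|\tau|^{1/2}$. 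This is doable, but it is additional work your sketch suppresses; the \cite{ET} argument sidesteps the issue entirely by never dualizing in $\xi$.
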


\begin{prop} \label{estimate:vv_x correction}
Let $N(x,t) = \partial_{x}(v_{1}v_{2})$. For $\frac{1}{2}<k<\frac{7}{2}$ and $0 \leq a < \min(\frac{7}{2}-k,\frac12)$, there exists $\epsilon >0$ such that for $\frac{1}{2}-\epsilon < b <\frac{1}{2}$, we have  
    \begin{align*}
    \left\| \int \chi_R(\xi, \tau) \lb \tau - \xi^3 \rb^{\frac{s+a-2}{3}} | \hat{N}(\xi,\tau)| d \xi \right\|_{L^2_\tau} \lesssim \| v_{1} \|_{\t X^{k,b}}\| v_{2} \|_{\t X^{k,b}},
    \end{align*}
    where $R = \bigl\{ (\xi,\tau) \; : \; |\tau| \geq 1 \text{ and } |\tau | \gg |\xi|^3\bigr\}$.
\end{prop}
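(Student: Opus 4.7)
The plan is the following. By duality, it suffices to bound
\[
J := \iiiint \chi_R(\xi,\tau)\,|\xi|\,\lb \tau-\xi^3\rb^{\theta}\, h(\tau)\, \widehat{v_1}(\xi_1,\tau_1)\, \widehat{v_2}(\xi_2,\tau_2)\, d\xi_1\, d\tau_1\, d\xi\, d\tau
\]
by $\|h\|_{L^2_\tau}\|v_1\|_{\t X^{k,b}}\|v_2\|_{\t X^{k,b}}$ for arbitrary $h\in L^2_\tau$, where $\xi_2 = \xi-\xi_1$, $\tau_2 = \tau-\tau_1$ and $\theta = \frac{k+a-2}{3}$ (reading the exponent in line with Lemma \ref{lemma: KdV Correction}). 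Setting $f_j(\xi,\tau):=\lb\xi\rb^k\lb\tau-\xi^3\rb^b\widehat{v_j}(\xi,\tau)$, so that $\|f_j\|_{L^2_{\xi,\tau}}=\|v_j\|_{\t X^{k,b}}$, turns this into a weighted convolution estimate. The cornerstone is the algebraic identity
\[
(\tau-\xi^3)-(\tau_1-\xi_1^3)-(\tau_2-\xi_2^3) = -3\xi\xi_1\xi_2.
\]
Since $\chi_R$ forces $\lb\tau-\xi^3\rb\approx|\tau|$ and $|\xi|\ll|\tau|^{1/3}$, at least one of $\lb\tau_1-\xi_1^3\rb$, $\lb\tau_2-\xi_2^3\rb$, or $|\xi\xi_1\xi_2|$ is $\gtrsim|\tau|$. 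I would split the integration domain into three corresponding regions (A), (B), (C).

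In the high-modulation regions (A) and (B), which are symmetric in $v_1,v_2$, I would exchange the factor $\lb\tau-\xi^3\rb^\theta \approx |\tau|^\theta$ against $\lb\tau_j-\xi_j^3\rb^{-b}\lesssim |\tau|^{-b}$ coming from the $\t X^{k,b}$ weight on the corresponding $v_j$. The assumption $a<\tfrac12$ guarantees $\theta<b$ for $b$ sufficiently close to $\tfrac12$, producing net decay in $|\tau|$. Applying Cauchy--Schwarz in $(\tau_1,\xi_1)$ then reduces matters to a weight convolution: the $\xi_1$-integral $\int \lb \xi_1\rb^{-2k}\,d\xi_1$ is finite precisely because $k>\tfrac12$, while the $\tau_1$-integral is controlled by the standard calculus lemma stated in the Appendix.

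The main obstacle is region (C), where $|\xi\xi_1\xi_2|\gtrsim|\tau|$ while $|\xi|\ll|\tau|^{1/3}$. Then $|\xi_1\xi_2|\gg|\xi|^2$ together with $\xi_1+\xi_2=\xi$ forces $|\xi_1|\approx|\xi_2|\gtrsim(|\tau|/|\xi|)^{1/2}$, making this a high-high to low frequency interaction. Here I would apply Cauchy--Schwarz first in $\xi$ (again using $k>\tfrac12$) and then in $(\xi_1,\tau_1)$, reducing the problem to estimating $\int_{|\xi_1|\gtrsim (|\tau|/|\xi|)^{1/2}} \lb\xi_1\rb^{-4k}\, d\xi_1$, which supplies a favorable negative power of $|\tau|/|\xi|$. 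Combined with the prefactor $|\xi|^2\lb\tau-\xi^3\rb^{2\theta}$ and integration over $|\xi|\lesssim|\tau|^{1/3}$, the net $|\tau|$-power remains bounded exactly when $a<\tfrac72-k$. The two thresholds $a<\tfrac12$ and $a<\tfrac72-k$ thus trace back respectively to the modulation trade-off in (A),(B) and the high-frequency trade-off in (C), yielding the stated range of admissible $a$.
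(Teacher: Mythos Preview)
Your resonance-decomposition strategy is a legitimate alternative to the paper's proof, which instead reduces (via Cauchy--Schwarz and Young) to the scalar supremum
\[
\sup_{\tau}\iint \frac{\lb\tau\rb^{\frac23(k+a-2)}|\xi|^2}{\lb\xi_1\rb^{2k}\lb\xi-\xi_1\rb^{2k}\lb\tau-\xi_1^3-(\xi-\xi_1)^3\rb^{4b-1}}\,d\xi_1\,d\xi
\]
and evaluates it directly by the changes of variable $\rho=(\xi-\xi_1)^3$, $\eta=\xi_1^3$ together with repeated use of the calculus lemma in the Appendix (the range $\tfrac12<k<2$ being deferred to \cite{CT}). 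No case splitting on the resonance identity is used.

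Your outline, however, has a real gap in regions (A) and (B). After trading $\lb\tau-\xi^3\rb^{\theta}$ against $\lb\tau_j-\xi_j^3\rb^{-b}$ you still carry the derivative factor $|\xi|$ and an outer $\xi$-integration over $|\xi|\ll|\tau|^{1/3}$; your description (``Cauchy--Schwarz in $(\tau_1,\xi_1)$ leaves $\int\lb\xi_1\rb^{-2k}d\xi_1$'') simply drops both. Absorbing $|\xi|\lesssim\lb\xi_1\rb+\lb\xi_2\rb$ into the frequency weights would instead produce $\int\lb\xi_1\rb^{2-2k}d\xi_1$, which diverges for $\tfrac12<k\le\tfrac32$, so the step as written does not close in the full range. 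A related slip: $\theta<b$ for $b$ near $\tfrac12$ is equivalent to $k+a<\tfrac72$, not to $a<\tfrac12$ alone; both hypotheses are needed there, so the threshold $a<\tfrac72-k$ already enters regions (A)/(B), not only (C). In region (C) the phrase ``Cauchy--Schwarz first in $\xi$ (using $k>\tfrac12$)'' is also unclear, since the $\lb\xi_j\rb^{-k}$ weights live on $\xi_1,\xi_2$ rather than on $\xi$. The plan is salvageable, but as it stands the derivative loss and the $\xi$-integration need to be handled explicitly before the argument closes.
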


\begin{prop} \label{estimate:|u|^2_x correction}
  Let $N(x,t) = \partial_{x}(u_{1}\overline{u_2})$, and define the set $R$ as in the previous proposition: $R = \bigl\{ (\xi,\tau) \, : \, |\tau| \geq 1 \text{ and } |\tau | \gg |\xi|^3\bigr\}$. Then for $s\geq0$, $k<\min(2s+\frac12,s+\frac{3}{4},\frac{7}{2})$ and $0 \leq a < \min(s-k+1,\frac{7}{2}-k,\frac12)$, there exists $\epsilon >0$ such that for $\frac{1}{2}-\epsilon < b <\frac{1}{2}$, we have
 \[ \left\| \int \chi_R(\xi, \tau) \lb \tau - \xi^3 \rb^{\frac{k +a-2}{3}} | \hat{N}(\xi,\tau)| d \xi \right\|_{L^2_{\tau}} \lesssim  \| u_{1}\|_{X^{s,b}} \| u_{2}\|_{X^{s,b}}. \]
\end{prop}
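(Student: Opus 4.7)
The plan is to follow the Cauchy-Schwarz/weighted-$L^2$ approach used for the companion correction Propositions \ref{estimate:uv correction}, \ref{estimate:|u|^2u correction} and \ref{estimate:vv_x correction}. Writing $|\hat N(\xi,\tau)|\leq|\xi|\,|\widehat{u_1\overline{u_2}}(\xi,\tau)|$ and introducing the nonnegative $L^2$-functions $f_j(\xi,\tau)=\lb \xi\rb^s\lb \tau+\xi^2\rb^b|\hat u_j(\xi,\tau)|$, so that $\|f_j\|_{L^2}=\|u_j\|_{X^{s,b}}$, the convolution identity (together with $|\hat{\overline{u_2}}(\xi',\tau')|=|\hat u_2(-\xi',-\tau')|$) gives
\[
|\widehat{u_1\overline{u_2}}(\xi,\tau)|\leq\int\frac{f_1(\xi_1,\tau_1)\,f_2(\xi_1-\xi,\tau_1-\tau)}{\lb \xi_1\rb^s\lb \xi_1-\xi\rb^s\lb \tau_1+\xi_1^2\rb^b\lb \tau_1-\tau+(\xi_1-\xi)^2\rb^b}\,d\xi_1\,d\tau_1.
\]
Two applications of Cauchy-Schwarz, first in $(\xi_1,\tau_1)$ at fixed $(\xi,\tau)$ and then in $(\xi,\tau)$ against a test function $h\in L^2_\tau$, combined with Young's inequality for the resulting convolution of the $f_j^2$ factors, reduce the inequality in Proposition \ref{estimate:|u|^2_x correction} to the supremum bound
\[
M\;:=\;\sup_{\tau\in\R}\int_{\R}\chi_R(\xi,\tau)\,|\xi|^2\,\lb \tau-\xi^3\rb^{\frac{2(k+a-2)}{3}}\,I(\xi,\tau)\,d\xi\;<\;\infty,
\]
where $I(\xi,\tau)=\int\lb \xi_1\rb^{-2s}\lb \xi_1-\xi\rb^{-2s}\lb \tau_1+\xi_1^2\rb^{-2b}\lb \tau_1-\tau+(\xi_1-\xi)^2\rb^{-2b}\,d\xi_1\,d\tau_1$.

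Integrating out $\tau_1$ via the standard resolvent bound $\int\lb \tau_1+A\rb^{-2b}\lb \tau_1+B\rb^{-2b}\,d\tau_1\lesssim \lb A-B\rb^{1-4b}$ from the Appendix (valid for $\tfrac14<b<\tfrac12$) yields
\[
I(\xi,\tau)\lesssim\int\frac{d\xi_1}{\lb \xi_1\rb^{2s}\lb \xi_1-\xi\rb^{2s}\lb \tau-\xi^2+2\xi\xi_1\rb^{4b-1}}.
\]
On $R$ one has $\lb \tau-\xi^3\rb\approx\lb \tau\rb$ and $|\xi|^3\ll|\tau|$. For $|\xi|\gtrsim 1$ the change of variables $\mu=\tau-\xi^2+2\xi\xi_1$ gains a factor $|\xi|^{-1}$ and isolates the integrable weight $\lb \mu\rb^{4b-1}$, with $\lb \xi_1\rb^{-2s}\lb \xi_1-\xi\rb^{-2s}$ evaluated on the line $\xi_1=(\mu-\tau+\xi^2)/(2\xi)$; for $|\xi|\lesssim 1$ one treats the denominator directly. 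The $\xi$-integration in $M$, constrained by $|\xi|^3\ll|\tau|$, is then organized into the usual High-High ($|\xi_1|\sim|\xi-\xi_1|\gg|\xi|$), High-Low, Low-High and low-frequency ($|\xi|\lesssim 1$) interactions. Each of the three hypotheses surfaces as a concrete obstruction: $a<\tfrac72-k$ comes from $\tau$-summability of $|\xi|^2\lb \tau\rb^{2(k+a-2)/3}$ under $|\xi|^3\ll|\tau|$; $a<s-k+1$ comes from balancing $\lb \xi_1\rb^{-2s}$ against $|\xi|^2\lb \xi\rb^{2(k+a-2)}$ in the High-Low regime; and $a<\tfrac12$ is needed for the low-frequency region, where the $|\xi|^{-1}$ gain from the $\mu$-substitution is unavailable.

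The main obstacle is the High-High regime $|\xi_1|\sim|\xi-\xi_1|\gg|\xi|$. There the phase $\tau-\xi^2+2\xi\xi_1$ is nearly linear in $\xi_1$ with small slope $2\xi$ relative to $|\xi_1|$, so the $\mu$-substitution is weak, and the combined factor $\lb \xi_1\rb^{-2s}\lb \xi_1-\xi\rb^{-2s}\approx\lb \xi_1\rb^{-4s}$ must single-handedly absorb both the Kato weight $\lb \tau\rb^{2(k+a-2)/3}$ and an additional power of $|\xi_1|$ produced by the derivative and the output regularity $k+a$; careful power counting there produces exactly $k<2s+\tfrac12$. The sharper restriction $k<s+\tfrac34$ arises from the Low-High regime $|\xi_1|\lesssim 1\ll|\xi|$, where only one input carries high-frequency information and one must trade the output regularity against a single $\lb \xi\rb^{-2s}$. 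The $\epsilon$-losses coming from $b=\tfrac12-\epsilon$ and the transition $|\xi|\approx 1$ are handled exactly as in Propositions \ref{estimate:uv correction}, \ref{estimate:|u|^2u correction} and \ref{estimate:vv_x correction}.
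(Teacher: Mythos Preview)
Your reduction to the supremum quantity $M$ via Cauchy--Schwarz is exactly what the paper does in its Cases~2 and~3 (which together cover $s>\tfrac12$), so for that range you are on the right track, modulo the fact that your ``careful power counting'' is only asserted, not carried out. The genuine gap is at the endpoint $s=0$ (and more generally for $0\le s\le\tfrac12$): your inner integral
\[
I(\xi,\tau)=\int\frac{d\xi_1}{\lb\xi_1\rb^{2s}\lb\xi_1-\xi\rb^{2s}\lb\tau-\xi^2+2\xi\xi_1\rb^{4b-1}}
\]
diverges for every $\xi\neq 0$ when $s=0$. Indeed, substituting $\mu=2\xi\xi_1$ gives $\tfrac{1}{2|\xi|}\int_\R\lb\mu\rb^{-(4b-1)}d\mu=\infty$, since $4b-1<1$. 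Hence $M=\infty$ and the supremum bound you set out to prove is simply false at $s=0$, which is in the range of the proposition. No decomposition into High--High, High--Low, etc.\ can repair this, because the divergence is already at the level of $I$ itself.

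The paper does not try to salvage Cauchy--Schwarz in this regime. Its Case~1 (covering all $s\ge 0$ with $k<2$) instead appeals directly to the bilinear refinement in Lemma~\ref{lem: interpolation},
\[
\Bigl|\iint_{\substack{\xi=\xi_1+\xi_2\\ \tau=\tau_1+\tau_2}}\lb\xi\rb^{\frac12-}h(\xi,\tau)\,\frac{f(\xi_1,\tau_1)}{\lb\tau_1+\xi_1^2\rb^{\frac12-}}\,\frac{g(\xi_2,\tau_2)}{\lb\tau_2-\xi_2^2\rb^{\frac12-}}\Bigr|\lesssim\|f\|_{L^2}\|g\|_{L^2}\|h\|_{L^2},
\]
which contributes an extra $\lb\xi\rb^{\frac12-}$ on the output that the Cauchy--Schwarz reduction cannot see. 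After using $\lb\xi\rb^s\lesssim\lb\xi_1\rb^s\lb\xi_2\rb^s$ and this lemma, the problem reduces to bounding $\sup_\tau\lb\tau\rb^{\frac{k+a-2}{3}}\bigl\|\chi_R(\cdot,\tau)\,|\xi|\,\lb\xi\rb^{-s-\frac12+}\bigr\|_{L^2_\xi}$, which yields $k+a<\min(2,s+1)$ and handles the entire range $0\le s\le\tfrac12$ at once. Your outline needs this ingredient (or an equivalent bilinear smoothing input) to close.
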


\begin{rmk} \label{rem: Trivial smoothings}
The proofs of estimates in Propositions \ref{estimate:|u|^2u} and \ref{estimate:vv_x} trivially imply
\begin{multline} \label{eq: trivial smoothing 1}
    \| u_1 \overline{u_2} u_3 \|_{X^{s+a, -b}} \lesssim \| u_1 \|_{X^{s+a,b}}\| u_2\|_{X^{s,b}} \| u_3\|_{X^{s,b}}\\
    +\| u_1 \|_{X^{s,b}}\| u_2\|_{X^{s+a,b}} \| u_3\|_{X^{s,b}}+ \| u_1 \|_{X^{s,b}}\| u_2\|_{X^{s,b}} \| u_3\|_{X^{s+a,b}}
\end{multline}
for all $s \geq 0$ and $a \geq 0$, and
\begin{align} \label{eq: trivial smoothing 2}
    \| \partial_{x}(v_{1}v_{2}) \|_{\t X^{k+a,-b}} \lesssim \| v_{1} \|_{\t X^{k+a,b}_{\alpha}}\| v_{2} \|_{\t X^{k,b}_{\alpha}}+\| v_{1} \|_{\t X^{k,b}_{\alpha}}\| v_{2} \|_{\t X^{k+a,b}_{\alpha}},
\end{align}
for all $k>-\frac34$ and $a \geq 0$. For example, to prove (\ref{eq: trivial smoothing 2}) it suffices to demonstrate
    \begin{multline*}
        \iint_{\begin{subarray}{l}\\ \xi=\xi_{1}+\xi_{2}\\ \tau=\tau_{1}+\tau_{2} \end
{subarray}}\frac{\lb\xi\rb^{s+a}|\xi||h(\xi,\tau)|}{\lb \tau - \xi^{3}\rb^{b}}\frac{|f(\xi_{1},\tau_{1})|}{\lb \xi_{1} \rb^{s}\beta(\xi_{1},\tau_{1})}\frac{|g(\xi_{2},\tau_{2})|}{\lb \xi_{2} \rb^{s}\beta(\xi_{2},\tau_{2})} \\
 \lesssim  \Vert \lb \xi \rb^{a}f \Vert_{L^{2}_{\xi,\tau}}\Vert g \Vert_{L^{2}_{\xi,\tau}}\Vert h \Vert_{L^{2}_{\xi,\tau}}+\Vert f \Vert_{L^{2}_{\xi,\tau}}\Vert \lb \xi \rb^{a}g \Vert_{L^{2}_{\xi,\tau}}\Vert h \Vert_{L^{2}_{\xi,\tau}},
    \end{multline*}
 where $\beta(\xi,\tau)=\lb \tau - \xi^{3}\rb^{b}+\chi_{|\xi|\leq1}\lb \tau \rb^{\alpha}$.
However, since
\begin{align*}
    \lb \xi \rb^{a} = \lb \xi_1+\xi_2 \rb^{a}\lesssim \lb \xi_1\rb^{a}+\lb \xi_2\rb^{a},
\end{align*}
the desired bound directly follows from the $a=0$ case, which has been already established in \cite[Lemma 5.10 (a)]{Holm1}. 
\end{rmk}

\section{Proof of the main results} \label{section: Main results}

\subsection{A contraction argument} \label{section: existence}
In this section, we show the local existence of solutions to (\ref{eq: SKdVintegral}) by a contraction argument. Define the map $\Gamma=(\Gamma_{1},\Gamma_{2})$ by
\begin{align*}
        &\Gamma_{1}(u,v)=\eta(t)S_{\R}^{t}u_{0e}-i\eta(t)\int_{0}^{t}S_{\R}^{t-t'}F(u,v)dt'+\eta(t)S_{0}^{t}(0,f-p-q), \\
        &\Gamma_{2}(u,v)=\eta(t)W_{\R}^{t}v_{0e}+\eta(t)\int_{0}^{t}W_{\R}^{t-t'}G(u,v)dt'+\eta(t)W_{0}^{t}(0,g-\t p-\t q),
\end{align*}
where 
\begin{multline*}
    F(u,v)=\eta(t/T)(\alpha uv+\beta|u|^{2}u), \quad p(t)=\eta(t)D_{0}(S_{\R}^{t}u_{0e}), \\
    q(t)=\eta(t)D_{0}\left ( \int_{0}^{t}S_{\R}^{t-t'}F(u,v)dt'\right ), \quad
    G(u,v)=\eta(t/T)(\gamma(|u|^{2})_{x}-vv_{x}), \\
    \t p(t)=\eta(t)D_{0}(W_{\R}^{t}v_{0e}), \quad \t q(t)=\eta(t)D_{0}\left ( \int_{0}^{t}W_{\R}^{t-t'}G(u,v)dt'\right ).
\end{multline*}
Let $(s,k)$ be an admissible pair. We start by estimating the $Y^{s,b}$-norm of $\Gamma_{1}(u,v)$. Pick any $b'$ such that $\frac{1}{2}-\epsilon<b'<b<\frac{1}{2}$, where $\epsilon$ is a constant provided in Propositions \ref{estimate:uv}, \ref{estimate:|u|^2u}, \ref{estimate:uv correction} and \ref{estimate:|u|^2u correction}. Combining Lemma \ref{lemma: inhomogeneous}, Propositions \ref{estimate:uv} and \ref{estimate:|u|^2u}, we have
\begin{multline*}
    \left \| \eta(t)\int_{0}^{t}S_{\R}^{t-t'}F(u,v)dt' \right \|_{Y^{s,b} } \lesssim \|F(u,v)\|_{X^{s,-b}}
    \lesssim T^{b-b'} \|F(u,v)\|_{X^{s,-b'}} \\
    \lesssim T^{b-b'} (\|u\|_{Y^{s,b'} }\|v\|_{\t Y^{k,b'}_{\alpha} }+\|u\|_{Y^{s,b'}}^{3}) 
    \lesssim T^{b-b'} (\|u\|_{Y^{s,b} }\|v\|_{\t Y^{k,b}_{\alpha} }+\|u\|_{Y^{s,b}}^{3}).
\end{multline*}
Moreover, by Lemmas \ref{lemma: S_0^t(0,f) estimate} and \ref{lemma: extension}, we have
\begin{multline*}
    \| \eta(t)S_{0}^{t}(0,f-p-q) \|_{Y^{s,b}} \lesssim \|(f-p-q)\chi_{(0,\infty)}\|_{H^{\frac{2s+1}{4}}_{t}(\R)} \\
    \lesssim \|f-p \|_{H^{\frac{2s+1}{4}}_{t}(\R^+)}+\|q \|_{H^{\frac{2s+1}{4}}_{t}(\R^+)} \lesssim \|f \|_{H^{\frac{2s+1}{4}}_{t}(\R^+)}+\|p \|_{H^{\frac{2s+1}{4}}_{t}(\R)}+\|q \|_{H^{\frac{2s+1}{4}}_{t}(\R)}.
\end{multline*}
By the Kato smoothing Lemma \ref{lemma: Kato Smoothing},
\begin{align*}
    \|p \|_{H^{\frac{2s+1}{4}}_{t}(\R)} \lesssim \|\eta(t)S_{\R}^{t}u_{0e} \|_{C_{x}H^{\frac{2s+1}{4}}_{t}(\R \times \R)} \lesssim \| u_{0} \|_{H^{s}(\R^{+})}.
\end{align*}
By Propositions \ref{estimate:uv correction} and \ref{estimate:|u|^2u correction},
\begin{multline} \label{eq: correction scaling}
    \left\| \int \chi_R(\xi, \tau) \lb \tau + \xi^2 \rb^{\frac{2s-3}4} |\hat{F(u,v)}(\xi,\tau)| d \xi \right\|_{L^2_\tau} \\
    \lesssim \|\eta(t/T)u\|_{X^{s,b'}}\|v\|_{\t X^{k,b'}}+\|\eta(t/T)u\|_{X^{s,b'}}\|u\|_{X^{s,b'}}^2 \\
    \lesssim T^{b-b'}(\|u\|_{Y^{s,b} }\|v\|_{\t Y^{k,b}_{\alpha} }+\|u\|_{Y^{s,b} }^{3}).
\end{multline}
Using Lemma \ref{lemma: NLS Correction} and (\ref{eq: correction scaling}), we get
\begin{multline*}
   \|q\|_{H^{\frac{2s+1}{4}}_{t}(\R)} \lesssim \left \|\eta(t)\int_{0}^{t}S_{\R}^{t-t'}F(u,v)dt'\right \|_{C_{x}H^{\frac{2s+1}{4}}_{t}(\R \times \R))} \\
   \lesssim \begin{cases}
	\| F(u,v)\|_{X^{s,-b}} &\; 0 \leq s \leq \frac12, \\
	\| F(u,v) \|_{X^{s,-b}} + \left\| \int \chi_R(\xi, \tau) \lb \tau + \xi^2 \rb^{\frac{2s-3}4} |\hat{F(u,v)}(\xi,\tau)| d \xi \right\|_{L^2_\tau} & \qquad  s > \frac12. 
	\end{cases} \\
    \lesssim T^{b-b'}(\|u\|_{Y^{s,b} }\|v\|_{\t Y^{k,b}_{\alpha} }+\|u\|_{Y^{s,b} }^{3}).
\end{multline*}

Therefore, we have shown
\begin{align} \label{eq: contraction1}
    \|\Gamma_{1}(u,v) \|_{Y^{s,b}} 
    \lesssim \| u_{0} \|_{H^{s}(\R^{+})}+\|f \|_{H^{\frac{2s+1}{4}}(\R^{+})}+T^{b- b'}(\|u\|_{Y^{s,b} }\|v\|_{\t Y^{k,b}_{\alpha} }+\|u\|_{Y^{s,b} }^{3}).
\end{align}
We can similarly show that 
\begin{align} \label{eq: contraction2}
    \|\Gamma_{2}(u,v) \|_{\t Y^{k,b}_{\alpha}}
    \lesssim \| v_{0} \|_{H^{k}(\R^{+})}+\|g \|_{H^{\frac{k+1}{3}}(\R^{+})}
    +T^{b- b'}(\|u\|_{Y^{s,b} }^{2}+\|v\|_{\t Y^{k,b}_{\alpha} }^{2}).
\end{align}
We can also estimate the difference $\|\Gamma_{1}(u,v)-\Gamma_{1}(\t u,\t v) \|_{Y^{s,b}}+ \|\Gamma_{2}(u,v)-\Gamma_{2}(\t u,\t v)\|_{\t Y^{k,b}_{\alpha}}$ in a similar way. Therefore, by a standard fixed point argument we can ensure that for some small enough $T>0$, the system (\ref{eq: SKdVintegral}) has a unique solution $(u,v) \in  Y^{s,b}([0,T]) \times \t Y^{k,b}_{\alpha}([0,T])$. That $(u,v) \in C_{x}H^{\frac{2s+1}{4}}_{t}(\R \times [0,T]) \times C_{x}H^{\frac{k+1}{3}}_{t}(\R \times [0,T])$ follows from Lemmas \ref{lemma: Kato Smoothing}, \ref{linHs}, \ref{lemma: NLS Correction} and \ref{lemma: KdV Correction}.

\subsection{Persistence of regularity} \label{section: persistence}
In this section we establish the following persistence of regularity result. The proof relies on the smoothing estimates in section \ref{section: nonlinear estimates}.
\begin{prop} \label{prop: persistence}
    Let $(s_{0},k_{0})$ and $(s_{1}, k_{1})$ be admissible pairs such that $s_{0} \leq s_{1}$ and $k_{0} \leq k_{1}$. Let $(u_0,f,v_0,g) \in \mathcal{H}^{s_1,k_1}$. Suppose that the corresponding solution $(u,v)$ solves (\ref{eq: SKdVintegral}) in $Y^{s_{0},b}([0,T^{*}])\times \t Y^{k_{0},b}_{\alpha}([0,T^{*}])$. Then we have
    \begin{align*}
        (u,v) \in Y^{s_{1},b}([0,T^{*}])\times \t Y^{k_{1},b}_{\alpha}([0,T^{*}]).
    \end{align*}
\end{prop}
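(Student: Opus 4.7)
The plan is a short-time bootstrap iterated along a partition of $[0, T^*]$ whose step size depends only on the lower-regularity a priori bound. By Lemma \ref{embedding} we have
\begin{align*}
M := \sup_{t \in [0, T^*]}\bigl(\|u(\cdot, t)\|_{H^{s_0}(\R)} + \|v(\cdot, t)\|_{H^{k_0}(\R)}\bigr) < \infty.
\end{align*}
I would fix a uniform partition $0 = t_0 < t_1 < \cdots < t_N = T^*$ of spacing $T_0 = T_0(M)$ to be determined, and on each slab $[t_j, t_{j+1}]$ run the contraction of Section \ref{section: existence} in the higher-regularity resolution space $Y^{s_1, b} \times \t Y^{k_1, b}_\alpha$, with initial data $(u(\cdot, t_j), v(\cdot, t_j))$ and the shifted boundary data. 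Since $(u_0, f, v_0, g) \in \mathcal{H}^{s_1, k_1}$ and the higher-regularity solution built inductively on each previous slab lies in $C_t H^{s_1}_x \times C_t H^{k_1}_x$ by Lemma \ref{embedding}, the compatibility conditions at $t_j$ are automatically inherited.

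Redoing the bounds (\ref{eq: contraction1})--(\ref{eq: contraction2}) at the higher regularity, the linear-and-boundary contribution is controlled by $\|u(\cdot, t_j)\|_{H^{s_1}(\R^+)} + \|v(\cdot, t_j)\|_{H^{k_1}(\R^+)} + \|f\|_{H^{(2s_1+1)/4}(\R^+)} + \|g\|_{H^{(k_1+1)/3}(\R^+)}$ via Lemmas \ref{estimate: Free Solutions}, \ref{lemma: Kato Smoothing}, \ref{lemma: S_0^t(0,f) estimate} and \ref{lemma: V_0^t(0,g) estimate}. For the nonlinear contributions I would replace Propositions \ref{estimate:uv}--\ref{estimate:|u|^2_x correction} by their \emph{asymmetric} versions, in which the high regularity $s_1$ (or $k_1$) is placed on a single factor and the remaining factors carry only the low regularity $(s_0, k_0)$. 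Remark \ref{rem: Trivial smoothings} records this asymmetric form for the cubic NLS term $|u|^2 u$ and the KdV bilinear term $\partial_x(v_1 v_2)$, and the same elementary observation $\lb \xi \rb^a \lesssim \lb \xi_1 \rb^a + \cdots + \lb \xi_n \rb^a$ (with $\xi = \xi_1 + \cdots + \xi_n$) upgrades the proofs of the coupling terms $uv$ and $\partial_x(u_1 \overline{u_2})$, as well as their correction counterparts, in an identical way. Bounding each low-regularity factor pointwise in time by $M$ via Lemma \ref{embedding} collapses the nonlinear contribution on $[t_j, t_{j+1}]$ to at most
\begin{align*}
C\, T_0^{b-b'}(1+M)^2\bigl(\|u\|_{Y^{s_1,b}([t_j, t_{j+1}])} + \|v\|_{\t Y^{k_1,b}_\alpha([t_j, t_{j+1}])}\bigr).
\end{align*}

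Choosing $T_0 = T_0(M)$ so that $CT_0^{b-b'}(1+M)^2 < 1/2$, the contraction closes on each slab and produces a fixed point $(\t u, \t v) \in Y^{s_1, b}([t_j, t_{j+1}]) \times \t Y^{k_1, b}_\alpha([t_j, t_{j+1}])$. Since this fixed point also lies in the lower-regularity resolution space via the trivial embedding $X^{s_1, b} \hookrightarrow X^{s_0, b}$ (and its KdV analogue), the uniqueness half of Theorem \ref{LWP} at level $(s_0, k_0)$ identifies it with $(u, v)|_{[t_j, t_{j+1}]}$. The closed contraction yields an endpoint bound
\begin{align*}
\|u(t_{j+1})\|_{H^{s_1}} + \|v(t_{j+1})\|_{H^{k_1}} \leq C_1\bigl(\|u(t_j)\|_{H^{s_1}} + \|v(t_j)\|_{H^{k_1}} + \|f\|_{H^{(2s_1+1)/4}} + \|g\|_{H^{(k_1+1)/3}}\bigr)
\end{align*}
with $C_1 = C_1(M)$. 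Iterating this recursion $N$ times gives a finite $H^{s_1} \times H^{k_1}$ bound at every $t_j$, and concatenating the per-slab Bourgain-norm bounds via Lemma \ref{lemma: concatenation} delivers $(u, v) \in Y^{s_1, b}([0, T^*]) \times \t Y^{k_1, b}_\alpha([0, T^*])$.

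The main technical obstacle is verifying the asymmetric form of each multilinear estimate used in the nonlinear step, in particular for the coupling terms $uv$ and $\partial_x(u_1 \overline{u_2})$ and their boundary-correction analogues; this asymmetric form is exactly what makes the contraction step $T_0$ depend only on the lower-regularity bound $M$, and hence what allows a finite number of iterations to reach the full endpoint $T^*$ rather than just a subinterval of it. Once those asymmetric estimates are in place, the remainder is a routine Gronwall-in-steps bootstrap.
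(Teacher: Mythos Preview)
Your overall iteration strategy---partitioning $[0,T^*]$ into slabs whose size depends only on a low-regularity bound, bootstrapping to higher regularity on each slab, and concatenating via Lemma~\ref{lemma: concatenation}---is the same as the paper's. But the proposal contains a genuine gap in the treatment of the boundary-correction terms, and one smaller inaccuracy.

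\textbf{The correction terms do not admit the asymmetric upgrade you claim.} The inequality $\lb \xi \rb^a \lesssim \sum_j \lb \xi_j \rb^a$ distributes \emph{spatial-frequency} weight onto the input factors; this is how Remark~\ref{rem: Trivial smoothings} obtains the asymmetric forms of Propositions~\ref{estimate:|u|^2u} and~\ref{estimate:vv_x}, and the same device works for the $X^{s,-b}$-type estimates in Propositions~\ref{estimate:uv} and~\ref{estimate:|u|^2_x}. But the correction norms in Lemmas~\ref{lemma: NLS Correction} and~\ref{lemma: KdV Correction} carry the weight $\lb \tau + \xi^2 \rb^{(2s_1-3)/4}$ (respectively $\lb \tau - \xi^3 \rb^{(k_1-2)/3}$), which on the region $R$ is comparable to a power of $\lb \tau \rb$, not of $\lb \xi \rb$. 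There is no way to split this modulation weight across the inputs so that the extra $s_1-s_0$ (or $k_1-k_0$) derivatives land as spatial regularity on a single factor. The paper flags exactly this obstruction in the remark immediately following Proposition~\ref{prop: persistence}. Its workaround is to use the \emph{full smoothing} estimates of Propositions~\ref{estimate:uv correction}--\ref{estimate:|u|^2_x correction}, which bound the correction at output level $s_0+a$ entirely by inputs at the low level $(s_0,k_0)$; since this gains only the fixed smoothing index $a$, the passage from $(s_0,k_0)$ to $(s_1,k_1)$ is done by induction in increments of size less than $a$. Your one-shot jump does not close as written.

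\textbf{The low-regularity factors are not controlled by $M$.} The multilinear estimates place the low-regularity inputs in $X^{s_0,b}$ or $\t X^{k_0,b}_\alpha$, and Lemma~\ref{embedding} only gives the embedding $Y^{s_0,b} \hookrightarrow C_t H^{s_0}_x$, not its converse. You therefore cannot bound those factors by $M=\sup_t(\|u(t)\|_{H^{s_0}}+\|v(t)\|_{H^{k_0}})$. The correct uniform quantity is $\|u\|_{Y^{s_0,b}([0,T^*])}+\|v\|_{\t Y^{k_0,b}_\alpha([0,T^*])}$, finite by hypothesis; this is what determines the slab size in the paper's argument.
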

\begin{proof}
    We first assume that $(s_{1}, k_{1})$ satisfies $s_{0} \leq s_{1} <s_{0}+a$ and $k_{0} \leq k_{1} <k_{0}+a$, where $a$ is a positive smoothing index provided in Propositions \ref{estimate:uv correction}, \ref{estimate:|u|^2u correction}, \ref{estimate:vv_x correction} and \ref{estimate:|u|^2_x correction}. Let $0<T\leq T^{*}$. Taking the $Y^{s_{1},b}([0,T])\times Y^{k_{1},b}_{\alpha}([0,T])$-norm of both sides of (\ref{eq: SKdVintegral}) and then applying the smoothing estimates (with estimates in Remark \ref{rem: Trivial smoothings}), we have for some $\epsilon >0$
\begin{multline*}
     \|u\|_{Y^{s_{1},b}([0,T])} \lesssim \|u_{0e}\|_{H^{s_{1}}}+\|f\|_{H_{\R^{+}}^{\frac{2s_{1}+1}{4}}} \\
     +T^{\epsilon}(\|u\|_{Y^{s_{0},b}([0,T])}\|v\|_{\t Y^{k_{0},b}_{\alpha}([0,T])}
     +\|u\|_{Y^{s_{0},b}([0,T])}^{2}\|u\|_{Y^{s_{1},b}([0,T])})
\end{multline*}
and
\begin{align*}
     \|v\|_{\t Y^{k_{1},b}_{\alpha}([0,T])} \lesssim \|v_{0e}\|_{H^{k_{1}}}+\|g\|_{H_{\R^{+}}^{\frac{k_{1}+1}{3}}}+T^{\epsilon}(\|v\|_{\t Y^{k_{0},b}_{\alpha}([0,T])}\|v\|_{\t Y^{k_{1},b}_{\alpha}([0,T])}
     +\|u\|_{Y^{s_{0},b}([0,T])}^{2}).
\end{align*}
Hence we have
\begin{multline*}
    \|u\|_{Y^{s_{1},b}([0,T])}+\|v\|_{\t Y^{k_{1},b}_{\alpha}([0,T])} 
    \lesssim  \|u_{0e}\|_{H^{s_{1}}}+\|f\|_{H_{\R^{+}}^{\frac{2s_{1}+1}{4}}}+\|v_{0e}\|_{H^{k_{1}}}+\|g\|_{H_{\R^{+}}^{\frac{k_{1}+1}{3}}} \\
    +T^{\epsilon}(\|u\|_{Y^{s_{0},b}([0,T^{*}])}+\|u\|_{Y^{s_{0},b}([0,T^{*}])}^{2}+\|v\|_{\t Y^{k_{0},b}_{\alpha}([0,T^{*}])}) 
    (\|u\|_{Y^{s_{1},b}([0,T])}+\|v\|_{\t Y^{k_{1},b}_{\alpha}([0,T])}).
\end{multline*}
Therefore, as long as $\|u\|_{Y^{s_{1},b}([0,T])}+\|v\|_{\t Y^{k_{1},b}_{\alpha}([0,T])}$ is finite and 
\begin{align*}
    T^{\epsilon}(\|u\|_{Y^{s_{0},b}([0,T^{*}])}+\|u\|_{Y^{s_{0},b}([0,T^{*}])}^{2}+\|v\|_{\t Y^{k_{0},b}_{\alpha}([0,T^{*}])}) \ll 1,
\end{align*}
we have 
\begin{align} \label{eq: persistence1}
    \|u\|_{Y^{s_{1},b}([0,T])}+\|v\|_{\t Y^{k_{1},b}_{\alpha}([0,T])} \lesssim  \|u_{0e}\|_{H^{s_{1}}}+\|f\|_{H_{\R^{+}}^{\frac{2s_{1}+1}{4}}}+\|v_{0e}\|_{H^{k_{1}}}+\|g\|_{H_{\R^{+}}^{\frac{k_{1}+1}{3}}}.
\end{align}
By a standard blow-up alternative argument, we can pick $T>0$ only depending on $\|u\|_{Y^{s_{0},b}([0,T^{*}])}$ and  $\|v\|_{Y^{k_{0},b}_{\alpha}([0,T^{*}])}$ such that (\ref{eq: persistence1}) holds. 

Pick a fixed small number $0<\delta <T/2$. Noticing that 
\begin{align*}
\| u(T-\delta) \|_{H^{s_{1}}}\leq \| u \|_{Y^{s_{1},b}([0,T])} \quad \text{and} \quad \| v(T-\delta) \|_{H^{k_{1}}} \leq \| v \|_{\t Y^{k_{1},b}_{\alpha}([0,T])},
\end{align*}
we can iterate the above argument on $[T-\delta,2T-\delta]$, $[2T-2\delta,3T-2\delta]$, $\dots$ and then apply Lemma \ref{lemma: concatenation} to obtain $\|u\|_{Y^{s_{1},b}([0,T^{*}])}+\|v\|_{\t Y^{k_{1},b}_{\alpha}([0,T^{*}])} <\infty$.

Finally, for a general admissible $(s_{0},k_{0})$ such that $s_{0} \leq s_{1}$ and $k_{0} \leq k_{1}$, induction on the Sobolev index establishes the bound 
\begin{align*}
    \|u\|_{Y^{s_{1},b}([0,T^{*}])}+\|v\|_{\t Y^{k_{1},b}_{\alpha}([0,T^{*}])} < \infty.
\end{align*}
\end{proof} 

\begin{rmk} For the real-line problem, the persistence of regularity would readily follow from the trivial one-sided smoothing estimate as in Remark \ref{rem: Trivial smoothings}. For the half-line problem it seems that additional correction terms
\begin{align*}
    \left\| \int \chi_R(\xi, \tau) \lb \tau + \xi^2 \rb^{\frac{2s-3}4} |\hat{N}(\xi,\tau)| d \xi \right\|_{L^2_\tau}
\end{align*}
and
\begin{align*}
\left\| \int \chi_R(\xi, \tau) \lb \tau - \xi^3 \rb^{\frac{k-2}{3}} | \hat{N}(\xi,\tau)| d \xi \right\|_{L^2_\tau}
\end{align*}
arising from Proposition \ref{lemma: NLS Correction} and Proposition \ref{lemma: KdV Correction} do not have similar properties. Therefore, we use the full smoothing estimates of these correction terms. 
\end{rmk}

\subsection{Uniqueness of solutions} \label{section: uniqueness}
Since the solutions we constructed in Section \ref{section: existence} were obtained as fixed points of operators that depend on the extensions of the initial data $(u_0,v_0)$, it is not clear whether or not these solutions depend on the corresponding extensions. That is, to ensure the uniqueness of the solutions to equation (\ref{eq: SKdV}), it is necessary to show that these solutions do not depend on the extensions of the initial data. If $s>\frac{3}{2}$ and $k>\frac{5}{2}$, the uniqueness follows from the energy estimate, which is proved in Section \ref{section: high regularity uniqueness}. In this section we establish the uniqueness of low-regularity solutions. For this, it suffices to establish the following lemma:
\begin{lem} \label{lemma: approximation}
Fix an admissible pair $(s_0,k_0)$ such that $s_0>\frac32,\: k_0>\frac52$. Let $(s,k)$ be an admissible pair such that $s<s_0$ and $k<k_0$. For $(u_0,f,v_0,g) \in \mathcal{H}^{s,k}$, fix $(u_{0e},v_{0e}) \in H^s(\R) \times H^k(\R)$ which extends $(u_0,v_0)$. Let $(u,v) \in Y^{s}([0,T^{*}]) \times \t Y^{k}_{\alpha}([0,T^{*}])$ be a local solution to (\ref{eq: SKdVintegral}) corresponding to $(u_{0e},f,v_{0e},g)$. Then for any $T<T^{*}$, there exists a sequence $\{(u_{0n},f_n,v_{0n},g_n)\} \subset \mathcal{H}^{s_0,k_0}$ satisfying the following conditions: \\
    (i) Let $(u_n,v_n)$ be a unique local solution to (\ref{eq: SKdVintegral}) corresponding to the initial and boundary data $(u_{0n},f_n,v_{0n},g_n)$. Then $(u_n,v_n)$ extends to the interval $[0,T]$, that is, 
    \begin{align*}
         (u_n,v_n) \in C_{t}H^{s_0}_x([0,T]\times \R^+) \times C_{t}H^{k_0}_x([0,T]\times \R^+). 
    \end{align*}
    (ii) We have
    \begin{align*}
         \lim_{n \to \infty}\left [ \|u-u_n\|_{C_{t}H^{s}_x([0,T]\times \R^+)}+\|v-v_n\|_{C_{t}H^{k}_x([0,T]\times \R^+)} \right ]=0.
    \end{align*}
\end{lem}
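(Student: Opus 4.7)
The plan is to approximate the data in the $\mathcal{H}^{s,k}$ topology by a sequence from $\mathcal{H}^{s_0,k_0}$, iterate the low-regularity contraction estimates of Section \ref{section: existence} to extend the resulting approximate solutions to the full interval $[0,T]$ and obtain convergence in $Y^{s,b}\times\t Y^{k,b}_\alpha$, and then upgrade the regularity by Proposition \ref{prop: persistence}.

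To build $(u_{0n},f_n,v_{0n},g_n)\in\mathcal{H}^{s_0,k_0}$ approximating $(u_0,f,v_0,g)$ in $\mathcal{H}^{s,k}$, extend each component to the real line via Lemma \ref{lemma: extension}, convolve with a smooth mollifier, and restrict back to $\R^+$; the rapid decay of the mollifier places the result in $H^{m}$ for every $m$. Compatibility between $u_{0n}$ and $f_n$ at the $s_0$ level is enforced as follows: if $s<\frac12$, the low-regularity pair needs no compatibility and one uses the density of $C^\infty_c(\R^+)$ in $H^s(\R^+)$ for $|s|<\frac12$ to take mollifications supported away from the origin, so that $u_{0n}(0)=f_n(0)=0$; if $s>\frac12$, the trace $u_0(0)=f(0)$ is a fixed number, and one corrects $u_{0n}$ by a smooth cutoff of amplitude $f_n(0)-u_{0n}(0)$, a correction that vanishes in $H^s(\R^+)$. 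The compatibility between $v_{0n}$ and $g_n$ is handled identically.

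To extend the approximate solutions from their local existence intervals to all of $[0,T]$, set
\[
M:=\sup_{t\in[0,T]}\bigl(\|u(t)\|_{H^s(\R^+)}+\|v(t)\|_{H^k(\R^+)}\bigr)+\|f\|_{H^{(2s+1)/4}(\R^+)}+\|g\|_{H^{(k+1)/3}(\R^+)},
\]
which is finite by Lemma \ref{embedding}. Partition $[0,T]$ into finitely many subintervals $[t_j,t_{j+1}]$ of length $\tau$ strictly below the local existence time produced by the contraction argument for data of $\mathcal{H}^{s,k}$-norm at most $2M$. Proceeding inductively on $j$ and applying the difference analogues of (\ref{eq: contraction1})--(\ref{eq: contraction2})---which follow from multilinearity combined with the same estimates of Section \ref{section: A priori estimates}---gives a Lipschitz bound for the data-to-solution map in $Y^{s,b}\times\t Y^{k,b}_\alpha$. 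For $n$ large the iterate stays close to $(u,v)$ uniformly in $j$, so the $\mathcal{H}^{s,k}$-data of $(u_n,v_n)$ at time $t_{j+1}$ remains inside the ball of radius $2M$ (the boundary compatibility at $t_{j+1}$ being automatic from the equation), and the local theory continues on the next subinterval. Concatenating via Lemma \ref{lemma: concatenation} and embedding via Lemma \ref{embedding} yield statement (ii).

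With (ii) in hand, Proposition \ref{prop: persistence} applied to each $(u_n,v_n)$ with admissible pairs $(s,k)$ and $(s_0,k_0)$ places it in $Y^{s_0,b}([0,T])\times\t Y^{k_0,b}_\alpha([0,T])\hookrightarrow C_tH^{s_0}_x\times C_tH^{k_0}_x$, proving (i). The main obstacle is the iteration step: the local existence time from Theorem \ref{LWP} shrinks with the data norm, so \emph{a priori} the approximants could blow up before reaching $T$. Only the uniform low-regularity bound inherited from $(u,v)$ on $[0,T^*]$, combined with the Lipschitz continuity of the data-to-solution map, prevents this and lets the induction close.
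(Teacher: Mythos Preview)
Your strategy matches the paper's: approximate the data in $\mathcal{H}^{s_0,k_0}$, use the low-regularity Lipschitz theory to propagate the approximants across $[0,T]$ and obtain (ii), then invoke Proposition \ref{prop: persistence} for (i). The paper is terser---it asserts density, writes the extension step through Lemma \ref{lemma: extension2}, and absorbs your subinterval iteration into the single claim ``$T<T_n$ for all large $n$'' plus local Lipschitzness---but the skeleton is the same.

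One point deserves care. The solution $(u,v)$ in the statement is built from a \emph{fixed} extension $(u_{0e},v_{0e})$, and the integral equation \eqref{eq: SKdVintegral} depends on that extension, not only on $(u_0,v_0)|_{\R^+}$. Your construction ``extend via Lemma \ref{lemma: extension}, mollify, restrict'' must therefore mollify the given $(u_{0e},v_{0e})$, so that the resulting full-line functions $(u_{0en},v_{0en})$ converge to $(u_{0e},v_{0e})$ in $H^s(\R)\times H^k(\R)$; otherwise the approximants $(u_n,v_n)$ could converge to a solution associated with a different extension, which is precisely the non-uniqueness the lemma is designed to rule out. The paper isolates this step as Lemma \ref{lemma: extension2}: given approximating half-line data $u_{0n}$ and the fixed extension $u_{0e}$, it produces high-regularity extensions $u_{0en}$ with $\|u_{0e}-u_{0en}\|_{H^s(\R)}\lesssim\|u_0-u_{0n}\|_{H^s(\R^+)}$. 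Your mollification route achieves the same thing more directly, provided you apply it to the given extension; just make that explicit.
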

Again, the uniqueness for $(u_n,v_n)$ follows from the energy estimates in Section \ref{section: high regularity uniqueness}. 

\begin{proof}
  Take a sequence $\{(u_{0n},f_n,v_{0n},g_n)\} \subset \mathcal{H}^{s_0,k_0}$ converging to $(u_0,f,v_0,g)$ in $ \mathcal{H}^{s,k}$. Using Lemma \ref{lemma: extension2}, we can choose a sequence $\{(u_{0en},v_{0en})\} \subset H^{2}(\R) \times H^{3}(\R)$ satisfying
\begin{align} \label{eq: extension}
    \|u_{0e}- u_{0en}\|_{H^{s}(\R)}+ \|v_{0e} - v_{0en}\|_{H^{k}(\R)} \lesssim  \|u_0 - u_{0n}\|_{H^{s}(\R^+)}+ \|v_0 - v_{0n}\|_{H^{k}(\R^+)},
\end{align}
which extends $\{ (u_{0n},v_{0n}) \}$ for each $n$. Let $(u_n,v_n)$ be a $H^{s}(\R^+) \times H^{k}(\R^+)$-solution to (\ref{eq: SKdVintegral}) corresponding to $(u_{0en},f_n,v_{0en},g_n)$, and let $[0,T_n]$ be the $H^{s}(\R^+) \times H^{k}(\R^+)$-level maximal existence interval of $(u_n,v_n)$. By Proposition \ref{prop: persistence}, $(u_n,v_n)$ remain in $H^{2}(\R^+) \times H^{3}(\R^+)$ on the time interval $[0,T_n]$. 
Take any $0\leq T<T^{*}$. Then we have $T<T_n$ for all large $n$. Using the local Lipschitzness of the solution map and (\ref{eq: extension}), we have
\begin{multline*}
   \|u-u_n\|_{C_{t}H^{s}_x([0,T]\times \R^+)}+\|v-v_n\|_{C_{t}H^{k}_x([0,T]\times \R^+)} 
   \lesssim \|u-u_n\|_{Y^{s,b}([0,T])}+\| v-v_n\|_{\t Y^{k,b}_{\alpha}([0,T])} \\ \lesssim  \|u_{0e}- u_{0en}\|_{H^{s}(\R)}+ \|v_{0e} - v_{0en}\|_{H^{k}(\R)}+\|f-f_n\|_{H^{\frac{2s+1}{4}}(\R^+)}+\|g-g_n\|_{H^{\frac{k+1}{3}}(\R^+)} \\
    \lesssim \|u_0 - u_{0n}\|_{H^{s}(\R^+)}+ \|v_0 - v_{0n}\|_{H^{k}(\R^+)}+\|f-f_n\|_{H^{\frac{2s+1}{4}}(\R^+)}+\|g-g_n\|_{H^{\frac{k+1}{3}}(\R^+)}.
\end{multline*} 
\end{proof}

We establish the uniqueness using an approximation argument. Fix $(u_0,v_0) \in H^{s}(\R^+) \times H^{k}(\R^+)$ and let $(u_{0e},v_{0e}), \:(\t u_{0e},\t v_{0e}) \in H^{s}(\R) \times H^{k}(\R)$ be two different extensions of $(u_0,v_0)$. Let $(u,v),\: (\t u, \t v)\in Y^{s}([0,T^{*}]) \times \t Y^{k}_{\alpha}([0,T^{*}])$ be local solutions to (\ref{eq: SKdVintegral}) corresponding to $(u_{0e},v_{0e})$ and $(\t u_{0e},\t v_{0e})$ respectively. Pick an admissible pair $(s_0,k_0)$ such that $s_0>\frac32, \: k_0>\frac52$. By Lemma \ref{lemma: approximation}, for any time interval $[0,T] \subset [0,T^{*})$, we can choose a sequence of solutions $(u_n,v_n) \in C_{t}H^{s_0}_x([0,T]\times \R^+) \times C_{t}H^{k_0}_x([0,T]\times \R^+)$ that approximates $(u,v)$ and $(\t u, \t v)$ simultaneously. This implies the desired uniqueness $(u,v)=(\t u, \t v)$.

\begin{lem}[{\cite[Lemma 4.1]{EGT}}]\label{lemma: extension2}
Fix $s \in \R$ and $k \geq$ s. Let $g \in H^s(\R^+)$, $f \in H^k(\R^+)$ and let $g_e$ be an $H^s$ extension of $g$ to $\R$. Then there is an $H^k$ extension $f_e$ of $f$ to $\R$ so that
\begin{align*}
    \|g_e-f_e\|_{H^s(\R)} \lesssim \|g-f\|_{H^{s}(\R^+)}.
\end{align*}
\end{lem}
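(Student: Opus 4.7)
The plan is to write $f_e$ as a universal $H^k$-extension of $f$ plus an additive correction, supported in $(-\infty,0]$ and living in $H^k(\R)$, that absorbs the residual discrepancy between $g_e$ and the analogous extension of $g$.

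First I would fix a linear total extension operator $E$ (for instance Stein's reflection operator), bounded from $H^\sigma(\R^+)$ into $H^\sigma(\R)$ for every $\sigma \in \R$, and set $\tilde f := Ef \in H^k(\R)$. A direct rearrangement furnishes the key algebraic identity
\[
 g_e - \tilde f \;=\; E(g-f) \;+\; (g_e - Eg),
\]
in which $\|E(g-f)\|_{H^s(\R)} \lesssim \|g-f\|_{H^s(\R^+)}$ by boundedness of $E$ on $H^s$, whereas the "obstruction" $\psi := g_e - Eg$ depends only on the fixed data $g_e$, lies in $H^s(\R)$, and vanishes on $\R^+$.

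Next I would construct a correction $h \in H^k(\R)$ with $\operatorname{supp} h \subset (-\infty,0]$ that approximates $\psi$ in $H^s(\R)$ within $\|g-f\|_{H^s(\R^+)}$; such an $h$ can be chosen because $\psi$ belongs (away from half-integer $s$) to the $H^s$-closure of the subspace $\{h \in H^k(\R) : \operatorname{supp} h \subset (-\infty,0]\}$, via the density of $C_c^\infty((-\infty,0))$ combined with mollification. Setting $f_e := \tilde f + h$ then produces an $H^k(\R)$-function with $f_e|_{\R^+} = f$ (since $h$ vanishes on $\R^+$), and applying the triangle inequality to the displayed identity yields
\[
 \|g_e - f_e\|_{H^s(\R)} \;\leq\; \|E(g-f)\|_{H^s(\R)} + \|\psi - h\|_{H^s(\R)} \;\lesssim\; \|g-f\|_{H^s(\R^+)},
\]
as required.

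The main obstacle is the density step, namely approximating the fixed $H^s$-function $\psi$ (supported in $(-\infty,0]$) by an $H^k$-function of the same support-type in the $H^s$-norm, with the error adapted to the possibly small quantity $\|g-f\|_{H^s(\R^+)}$. This is clean for $s \notin \tfrac12+\Z$ — the regime used throughout the paper — but at the half-integer thresholds one would need to invoke Lions--Magenes interpolation spaces; fortunately those values are excluded from the admissible range. A secondary point worth checking is that the implicit constant is allowed to depend on $g_e$ (through $\psi$) but not on $f$, which is exactly what is needed to pass to the limit in the application within Lemma \ref{lemma: approximation}.
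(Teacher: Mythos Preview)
The paper does not prove this lemma itself; it is quoted from \cite{EGT} with the remark that the argument there extends verbatim to all $s\in\R$. Your approach---fix a universal extension operator $E$, decompose $g_e-Ef=E(g-f)+(g_e-Eg)$, and absorb the fixed obstruction $\psi=g_e-Eg$ (which lies in $H^s(\R)$ and vanishes on $\R^+$) by an $H^k$ correction $h$ supported in $(-\infty,0]$ chosen via density---is correct and is the natural argument.

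One clarification: your worry about half-integer thresholds is unnecessary. The density of $\{h\in H^k(\R):h|_{\R^+}=0\}$ in $\{\psi\in H^s(\R):\psi|_{\R^+}=0\}$ holds for \emph{every} real $s\le k$: first translate $\psi$ leftward by $\delta>0$ (translation is strongly continuous on $H^s(\R)$), then convolve with a mollifier of width $\epsilon<\delta$, producing a function in $H^\infty(\R)$ supported in $(-\infty,0)$ and arbitrarily close to $\psi$ in $H^s$. No Lions--Magenes machinery is required. Note also that the lemma as literally stated (with an implicit constant independent of $f$) is vacuous in the degenerate case $g=f$ with $g_e\notin H^k$, since then one would need $h=\psi$ exactly; but the infimum over admissible $f_e$ is still zero by the same density, and in the only application (Lemma~\ref{lemma: approximation}) one has $f=u_{0n}\to u_0=g$ with $f\neq g$, so this degeneracy never arises.
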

Note that this lemma is stated in \cite{EGT} for $s \geq 0$, but it holds for any $s \in \R$ by the same proof.

\subsection{Nonlinear smoothing}
Let $[0,T]$ be the local existence interval for (\ref{eq: SKdV}), given the initial and boundary data $(u_0,f,v_0,g) \in \mathcal{H}^{s,k}$. Then we have for $0\leq t \leq T$,
\begin{align*}
    u-S_{0}^{t}(u_{0},f)=\eta(t)\int_{0}^{t}S_{\R}^{t-t'}F(u,v)dt'-\eta(t)S_{0}^{t}(0,q),
\end{align*}
where 
\begin{align*}
    F(u,v)=\eta(t/T)(\alpha uv+\beta|u|^{2}u)  \quad \text{and} \quad q(t)=\eta(t)D_{0}\left ( \int_{0}^{t}S_{\R}^{t-t'}F(u,v)dt'\right ).
\end{align*}
Therefore, applying the smoothing estimates, we get 
\begin{align*}
    &\| u-S_{0}^{t}(u_{0},f) \|_{C_{t}H^{s+a}([0,T] \times \R^{+})}\lesssim \| u-\eta(t)S_{0}^{t}(u_{0},f) \|_{Y^{s+a,b} } \\
    &\qquad \lesssim 
    \left \|\eta(t)\int_{0}^{t}S_{\R}^{t-t'}F(u,v)dt' \right \|_{Y^{s+a,b} }+\|\eta(t)S_{0}^{t}(0,q)\|_{Y^{s+a,b}} \\
    &\qquad \lesssim \|u\|_{Y^{s,b}}\|v\|_{\t Y^{k,b}_{\alpha}}+\|u\|_{Y^{s,b} }^{3} <\infty.
\end{align*}

\subsection{Proof of Theorem \ref{Global}}
Let $f=0$ and $g=0$. If $(s,k)=(1,1)$, it is proved in \cite[Theorem 1.3]{CC1} that if $\alpha \gamma>0$, we have an a priori bound
\begin{align}\label{estimate: a priori bound}
    \|u\|_{H^{1}(\R^+)}+\|v\|_{H^{1}(\R^+)} \leq C(\|u_0\|_{H^{1}(\R^+)},\|v_0\|_{H^{1}(\R^+)})
\end{align}
and the solution is global. If $s\geq1$ and $k \geq 1$, we can extend the solution globally in time by the persistence of regularity. Polynomial growth of $\|u\|_{H^{s}(\R^+)}$ follows from (\ref{estimate: a priori bound}) and the nonlinear smoothing (\ref{smoothing}), see \cite{ET}.

\section{Uniqueness of high regularity solutions} \label{section: high regularity uniqueness}

In this section we discuss the uniqueness of solutions to (\ref{eq: SKdV}) at the regularity level $(\frac{3}{2}+,\frac{5}{2}+)$. 

The following argument is a variant of the one obtained in \cite{GMZ}. Assume that $(u_1, v_1)$ and $(u_2, v_2)$ are two solutions to (\ref{eq: SKdV}) with the same initial and boundary data. Then the difference $(u,v) := (u_1 -u_2, v_1 - v_2)$ satisfies
\begin{align}
    i\partial_{t}u + \partial_{x}^2 u = \alpha( uv_1 + u_2v) + \beta (u|u_1|^2 + (\overline{u}u_2 + \overline{u_1}u) u_2)  \label{difference_1}\\
    \partial_{t}v + \partial_{x}^3 v = \partial_{x}\left( \gamma(u\overline{u_1} + \overline{u}u_2) - \frac12 v(v_1 + v_2)\right)\label{difference_2}
\end{align}
on $(x,t) \in \R^+ \times \R^+$ with zero initial and boundary conditions.

We estimate the $H^{1}$-norms of $u$ and $v$. In this section, all implicit constants depend only on $\|u_{1}\|_{C_{t}H^{\frac{3}{2}+}_{x}}+\|u_{2}\|_{C_{t}H^{\frac{3}{2}+}_{x}}+\|v_{1}\|_{C_{t}H^{\frac{5}{2}+}_{x}}+\|v_{2}\|_{C_{t}H^{\frac{5}{2}+}_{x}}$. Also, in the following calculations, we assume that $(u_1, v_1)$ and $(u_2, v_2)$ are smooth. The argument extends to the general case using a limiting argument, see \cite{Holm} for details. 

First, we have
\begin{align*}
\partial_t  \| u\|_{L^2_{\R^+}}^2  
&= 
2 \Re \int_0^\infty \overline{u}u_t d x \\
&= 
2 \Im \int_0^\infty \overline{u}\left[ \alpha( uv_1 + u_2v) + \beta (u|u_1|^2 + (\overline{u}u_2 + \overline{u_1}u) u_2) - u_{xx}\right] d x.
\end{align*}
Integrating by parts and using the zero boundary condition for $u$, we obtain
\begin{align*}
\partial_t \| u\|_{L^2_{\R^+}}^2 
&= 
2 \Im \int_0^\infty \overline{u}\left[  \alpha( uv_1 + u_2v) + \beta (u|u_1|^2 + (\overline{u}u_2 + \overline{u_1}u) u_2\right] d x \\
&\leq
2 |\alpha| \left(\|u\|_{L^2_{\R^+}}^2 \|v_1\|_{H^{\frac12+}_{\R^+}} + \|u\|_{L^2_{\R^+}} \|v\|_{L^2_{\R^+}}  \|u_2\|_{H^{\frac12+}_{\R^+}}\right) \\
&\qquad + 2 |\beta|  \|u\|_{L^2_{\R^+}} ^2\left(\|u_1\|_{H^{\frac12+}_{\R^+}} + \|u_2\|_{H^{\frac12+}_{\R^+}}\right)^2 \\
&\lesssim \|u\|_{L^2_{\R^+}}^2+\|v\|_{L^2_{\R^+}}^2.
\end{align*}
We used the embedding $H^{\frac12+}(\R) \hookrightarrow L^{\infty}(\R)$. Also we have
\begin{align*}
\partial_t \| v\|_{L^2_{\R^+}}^2
&= 
 2\int_0^\infty   vv_t d x \\
&=  2 \int_0^\infty v [ \gamma(u\overline{u_1} + \overline{u}u_2) - \frac12 v(v_1 + v_2) - v_{xx} ]_{x} d x\\
&=  - v_x(0,t)^2 +  \int_0^\infty 2\gamma v\left(  u\overline{u_1} + \overline{u}u_2\right)_x - \frac12 v^2(v_1 + v_2)_x d x \\
&\leq
2 |\gamma| \left( \|u\|_{L^2_{\R^+}} \|v\|_{L^2_{\R^+}} \left(\|u_1\|_{H^{\frac32+}_{\R^+}} + \|u_2\|_{H^{\frac32+}_{\R^+}}\right) \right) \\
&\qquad + 2|\gamma| \left(\|u_1\|_{H^{\frac12+}_{\R^+}}\|u_{x}\|_{L^{2}_{\R^+}}\|v\|_{L^2_{\R^+}}+\|u_2\|_{H^{\frac12+}_{\R^+}}\|u_{x}\|_{L^{2}_{\R^+}}\|v\|_{L^2_{\R^+}} \right) \\
&\qquad + \frac12\|v\|_{L^2_{\R^+}} ^2\left(\|v_1\|_{H^{\frac32+}_{\R^+}} + \|v_2\|_{H^{\frac32+}_{\R^+}}\right) \\
&\lesssim \|u\|^{2}_{H^{1}_{\R^+}}+\|v\|^{2}_{H^{1}_{\R^+}}.
\end{align*}
Similarly, 
\begin{align*}
        \partial_t \| u_{x}\|_{L^2_{\R^+}}^2&=2\Re\int_{0}^{\infty}u_{xt}\overline{u}_{x}dx=-2\Re\int_{0}^{\infty}u_{t}\overline{u}_{xx}dx\\
        &=-2\Re\left( -i\int_{0}^{\infty}(\alpha( uv_1 + u_2v) + \beta (u|u_1|^2 + (\overline{u}u_2 + \overline{u_1}u) u_2)-u_{xx})\overline{u}_{xx}dx \right)\\
        &=-2\Im\left(\int_{0}^{\infty}(\alpha( uv_1 + u_2v) + \beta (u|u_1|^2 + (\overline{u}u_2 + \overline{u_1}u) u_2))\overline{u}_{xx}dx \right)\\
        &=2\Im\left(\int_{0}^{\infty}(\alpha( uv_1 + u_2v) + \beta (u|u_1|^2 + (\overline{u}u_2 + \overline{u_1}u) u_2))_{x}\overline{u}_{x}dx \right)\\ 
        &\lesssim \|u\|^{2}_{H^{1}_{\R^+}}+\|v\|^{2}_{H^{1}_{\R^+}}.
\end{align*}
Let $V(x,t):=-\int_{x}^{\infty}v(y,t)dy$. Integrating (\ref{difference_2}), we get 
\begin{align}
    V_{t}+v_{xx}=\gamma(u\overline{u_{1}}+\overline{u}u_{2})-\frac{1}{2}v(v_1+v_2). \label{uniqueness_V}
\end{align}
Note that 
\begin{align*}
    \int_{0}^{\infty}v_{t}V_{t}dx=-\frac{1}{2}V_{t}^{2}(0,t).
\end{align*}
Therefore, multiplying (\ref{uniqueness_V}) by $v_{t}$ and integrating, we have
\begin{align*}
\int_{0}^{\infty}v_{xx}v_{t}dx=\frac{1}{2}V_{t}^{2}(0,t)+\gamma\int_{0}^{\infty}(u\overline{u_{1}}+\overline{u}u_{2})v_{t}dx-\frac{1}{2}\int_{0}^{\infty}v v_{t}(v_1+v_2)dx.
\end{align*}
Let 
\begin{align*}
   L(t):=\frac{1}{2}\|v_{x}\|_{L^2_{\R^+}}^2+\gamma\int_{0}^{\infty}(u\overline{u_{1}}+\overline{u}u_{2})v dx-\frac{1}{4}\int_{0}^{\infty}v^{2}(v_1+v_2)dx.
\end{align*}
Since 
\begin{align*}
   \frac{1}{2}\partial_{t}\|v_{x}\|_{L^2_{\R^+}}^2 &=\int_{0}^{\infty}v_{x}v_{xt}dx 
   =-v_{x}(0,t)v_{t}(0,t)-\int_{0}^{\infty}v_{xx}v_{t}dx \\
   &=-\frac{1}{2}V_{t}^{2}(0,t)-\gamma\int_{0}^{\infty}(u\overline{u_{1}}+\overline{u}u_{2})v_{t}dx+\frac{1}{2}\int_{0}^{\infty}v v_{t}(v_1+v_2)dx,
\end{align*}
we have
\begin{align*}
   \frac{dL}{dt} 
   &=-\frac{1}{2}V_{t}^{2}(0,t)+\gamma\int_{0}^{\infty}(u\overline{u_{1}}+\overline{u}u_{2})_{t}vdx-\frac{1}{4}\int_{0}^{\infty}v^{2}(v_1+v_2)_{t}dx \\
   &=-\frac{1}{2}V_{t}^{2}(0,t)+\gamma\int_{0}^{\infty}(u_{t}\overline{u_{1}}+\overline{u}_{t}u_{2}+u\overline{u_{1}}_{t}+\overline{u}u_{2t})vdx\\
   & \qquad -\frac{1}{4}\int_{0}^{\infty}v^{2}(v_1+v_2)_{t}dx  \\
   &=:-\frac{1}{2}V_{t}^{2}(0,t)+\textit{I\,}+\textit{II\,}.
\end{align*}
We can bound $|\textit{I\,}|$ by $\|u\|^{2}_{H^{1}_{\R^+}}+\|v\|^{2}_{H^{1}_{\R^+}}$ using (\ref{eq: SKdV}), (\ref{difference_1}) and integration by parts. For example, the first term inside the integral $\textit{I\,}$ is
\begin{align*}
   \left|\int_{0}^{\infty}u_{t}\overline{u_{1}}vdx\right| &=\left|\int_{0}^{\infty}\overline{u_{1}}v(iu_{xx}-i\alpha( uv_1 + u_2v) -i \beta (u|u_1|^2+(\overline{u}u_2 + \overline{u_1}u) u_2))dx \right|\\
   &\leq \left|\int_{0}^{\infty}(\overline{u_{1}}_{x}v+\overline{u_{1}}v_{x})u_{x}dx \right| \\
   & \qquad +\left|\int_{0}^{\infty}{u_{1}}v(\alpha( uv_1 + u_2v)+ \beta (u|u_1|^2+(\overline{u}u_2 + \overline{u_1}u) u_2))dx \right| \\
   &\lesssim \|u\|^{2}_{H^{1}_{\R^+}}+\|v\|^{2}_{H^{1}_{\R^+}}.
\end{align*}
Also, since
\begin{align*}
     \textit{II\,}&=-\frac{1}{4}\left(\int_{0}^{\infty}v^{2}(\gamma(|u_{1}|^{2})_{x}-v_{1xxx}-v_{1}v_{1x})dx
      +\int_{0}^{\infty}v^{2}(\gamma(|u_{2}|^{2})_{x}-v_{2xxx}-v_{2}v_{2x})dx\right)\\
      &=-\frac{1}{4}\left(\int_{0}^{\infty}v^{2}(\gamma(|u_{1}|^{2})_{x}-v_{1}v_{1x}+\gamma(|u_{2}|^{2})_{x}-v_{2}v_{2x})dx
      +\int_{0}^{\infty}2vv_{x}(v_{1}+v_{2})_{xx}dx\right),
\end{align*}
we have $|\textit{II\,}|\lesssim \|u\|^{2}_{H^{1}_{\R^+}}+\|v\|^{2}_{H^{1}_{\R^+}}$. Therefore,
\begin{align*}
    L(t) \lesssim \int_{0}^{t}\|u\|^{2}_{H^{1}_{\R^+}}+\|v\|^{2}_{H^{1}_{\R^+}}dt'.
\end{align*}
Noting that 
\begin{align*}
      \left|\gamma\int_{0}^{\infty}(u\overline{u_{1}}+\overline{u}u_{2})v dx-\frac{1}{4}\int_{0}^{\infty}v^{2}(v_1+v_2)dx\right|
      \lesssim \|u\|^{2}_{L^{2}_{\R^+}}+\|v\|^{2}_{L^{2}_{\R^+}} 
      \lesssim \int_{0}^{t}\|u\|^{2}_{H^{1}_{\R^+}}+\|v\|^{2}_{H^{1}_{\R^+}}dt',
\end{align*}
we have 
\begin{align*}
    \|v_{x}\|_{L^2_{\R^+}}^2 \lesssim \int_{0}^{t}\|u\|^{2}_{H^{1}_{\R^+}}+\|v\|^{2}_{H^{1}_{\R^+}}dt'.
\end{align*}
Combining the estimates above, we obtain
\begin{align*}
    \|u\|^{2}_{H^{1}_{\R^+}}+\|v\|^{2}_{H^{1}_{\R^+}} \lesssim \int_{0}^{t}\|u\|^{2}_{H^{1}_{\R^+}}+\|v\|^{2}_{H^{1}_{\R^+}}dt',
\end{align*}
and the desired uniqueness follows from Gronwall's inequality.

\section{Proofs of nonlinear estimates} \label{section: proofs of estimates}
We provide the proofs of the nonlinear estimates presented in Section \ref{section: nonlinear estimates}. The following lemma is one of the main tool in proving our estimates.

\begin{lem} \label{lem: interpolation}
    For $f, g, h \in L^{2}_{\xi,\tau}$, we have
\begin{align*}
    \left | \iint_{\begin{subarray}{l}\\ \xi=\xi_{1}+\xi_{2}\\ \tau=\tau_{1}+\tau_{2} \end 
{subarray}} \lb 3\xi_2^2\pm2\xi_1 \rb^{\frac{1}{2}-}h(\xi, \tau) 
\frac{f(\xi_1, \tau_1)}{\lb \tau_1 \pm \xi_1^2 \rb^{\frac12 -}}\frac{g(\xi_{2}, \tau_{2})}{\lb \tau_2 - \xi_2^3 \rb^{\frac12 -}}  \right | \lesssim \| f\|_{L^2} \|g\|_{L^2} \|h\|_{L^2}
\end{align*}
and 
\begin{align*}
    \left | \iint_{\begin{subarray}{l}\\ \xi=\xi_{1}+\xi_{2}\\ \tau=\tau_{1}+\tau_{2} \end 
{subarray}} \lb \xi\rb^{\frac{1}{2}-}h(\xi, \tau) 
\frac{f(\xi_1, \tau_1)}{\lb \tau_1 + \xi_1^2 \rb^{\frac12 -}}\frac{g(\xi_{2}, \tau_{2})}{\lb \tau_2 \pm \xi_2^2 \rb^{\frac12 -}}  \right | \lesssim \| f\|_{L^2} \|g\|_{L^2} \|h\|_{L^2}.
\end{align*}
\end{lem}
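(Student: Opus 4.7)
The plan is to dispatch both estimates by a Cauchy--Schwarz/elementary calculus scheme of the type standard in $X^{s,b}$ bilinear theory. First I would apply Cauchy--Schwarz isolating $h$ in the outer $(\xi,\tau)$ integration and then Cauchy--Schwarz again in the inner $(\xi_1,\tau_1)$ variables with the product $fg$ separated from the remaining kernel. This reduces each assertion to the uniform bound
\[
\sup_{\xi,\tau}\int \frac{W(\xi,\xi_1)^{1-}}{\lb \tau_1 \pm \xi_1^2\rb^{1-}\lb \tau_2 - \xi_2^3\rb^{1-}}\, d\xi_1\, d\tau_1 \;\lesssim\; 1
\]
for the first inequality, and analogously with $\lb \tau_2 \pm \xi_2^2\rb^{1-}$ in place of $\lb \tau_2 - \xi_2^3\rb^{1-}$ for the second, where throughout $\xi_2 = \xi - \xi_1$, $\tau_2 = \tau - \tau_1$, and $W(\xi,\xi_1)$ denotes the numerator weight, i.e.\ $\lb 3\xi_2^2 \pm 2\xi_1 \rb$ in the first estimate and $\lb \xi \rb$ in the second.

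Next I would carry out the $\tau_1$-integration using the standard calculus lemma from the Appendix (of the form $\int \lb \tau_1 - a\rb^{-(1-)}\lb \tau_1 - b\rb^{-(1-)}\,d\tau_1 \lesssim \lb a-b\rb^{-(1-)}$), collapsing matters to the one-dimensional estimate
\[
\int \frac{W(\xi,\xi_1)^{1-}}{\lb \tau + \Psi(\xi_1)\rb^{1-}}\,d\xi_1 \;\lesssim\; 1,
\]
where $\Psi(\xi_1) := \pm \xi_1^2 - \xi_2^3$ in the first case and $\Psi(\xi_1) := \xi_1^2 \pm \xi_2^2$ in the second. The key structural observation is $|\Psi'(\xi_1)| = W(\xi,\xi_1)$: in the first case $\Psi'(\xi_1) = \pm 2\xi_1 + 3\xi_2^2$ directly; in the second case, when the signs of $\xi_1^2$ and $\xi_2^2$ are opposite, $\Psi' = \pm 2\xi$ and again $W = |\Psi'|$.

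To close the argument I would split the remaining $\xi_1$-integral into the finitely many monotone branches of the polynomial $\Psi$. On the exceptional set $\{|\Psi'|\leq 1\}$ the integrand is bounded and the set has $O(1)$ Lebesgue measure (as $\Psi'$ is a polynomial of bounded degree), contributing $O(1)$. On $\{|\Psi'|>1\}$ the substitution $u = \tau + \Psi(\xi_1)$ yields
\[
\int \frac{|\Psi'|^{1-}}{\lb u\rb^{1-}}\cdot \frac{du}{|\Psi'|} \;\lesssim\; \int \frac{du}{|\Psi'(\xi_1(u))|^{\epsilon}\lb u\rb^{1-\epsilon}},
\]
which converges because $|\Psi'|$ grows polynomially in $|u|$ outside a compact set. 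The principal obstacle is the coincident-sign subcase of the second estimate, where $\Psi' = 2(\xi_1-\xi_2)$ may vanish even for large $|\xi|$; here I would invoke the resonance identity $(\tau_1 + \xi_1^2)+(\tau_2 + \xi_2^2)-(\tau + \xi^2) = -2\xi_1\xi_2$ together with a dyadic decomposition in the modulations $\lb \tau_j + \xi_j^2\rb$ to recover the missing $\lb \xi \rb^{1-}$ from the largest modulation---this is Bourgain's bilinear $L^4_{x,t}$ Schr\"odinger estimate on $\R$.
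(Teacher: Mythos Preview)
Your Cauchy--Schwarz reduction does not close: the supremum you arrive at is infinite. After the $\tau_1$-integration (which is fine) you need
\[
\sup_{\xi,\tau}\ \int_{\R}\frac{\lb \Psi'(\xi_1)\rb^{1-2\epsilon}}{\lb \tau+\Psi(\xi_1)\rb^{1-2\epsilon}}\,d\xi_1 \ <\ \infty,
\]
but take $\xi=\tau=0$ in the first estimate: then $\Psi(\xi_1)=\pm\xi_1^2+\xi_1^3$ and $\Psi'(\xi_1)=\pm 2\xi_1+3\xi_1^2$, so for $|\xi_1|\gg 1$ the integrand is $\approx |\xi_1|^{2(1-2\epsilon)}/|\xi_1|^{3(1-2\epsilon)}=|\xi_1|^{-(1-2\epsilon)}$, which is not integrable. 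Your change of variables $u=\tau+\Psi$ makes this transparent: on the unbounded branches $|\Psi'|\sim |u|^{2/3}$, hence
\[
\int \frac{du}{|\Psi'|^{2\epsilon}\lb u\rb^{1-2\epsilon}}\ \sim\ \int \frac{du}{|u|^{1-2\epsilon/3}}\ =\ \infty.
\]
The same failure occurs in the second estimate (opposite signs): there $\Psi'\equiv 2\xi$ is constant in $\xi_1$, so the $u$-range is all of $\R$ and $\int \lb u\rb^{-(1-2\epsilon)}\,du$ diverges. The point is that Cauchy--Schwarz with modulation exponent $\frac12-$ lands you exactly on the wrong side of the integrability threshold; this scheme works at $b=\tfrac12+$ (which is Wu's lemma) but not at $b=\tfrac12-$.

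The paper closes this gap by multilinear interpolation rather than a direct calculus argument. It takes Wu's bilinear estimates at $b=\tfrac12+$ with weight $|3\xi_2^2\pm 2\xi_1|^{1/2}$ (resp.\ $|\xi|^{1/2}$) as one endpoint, and the weight-free $L^4_{x,t}$ Strichartz estimates (which give the trilinear bound at $b=\tfrac38+$) as the other; interpolating the two trilinear families via Grafakos--Masty\l{}o yields the $b=\tfrac12-$ statement with weight $\lb\cdot\rb^{1/2-}$. Your structural observation $W=|\Psi'|$ is exactly what underlies Wu's $b>\tfrac12$ endpoint (via transference to free solutions and the Jacobian of $(\xi_1,\xi_2)\mapsto(\xi,\mp\xi_1^2+\xi_2^3)$), but to push below $b=\tfrac12$ one must trade some of the weight for Strichartz input---interpolation is what does this.
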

Here $\frac12-$ means $\frac12-\epsilon$ for arbitrary $\epsilon \ll 1$. The implicit constants may depend on $\epsilon$.
\begin{proof}
Interpolating the $L^{6}_{x,t}$-Strichartz estimates 
\begin{align*} 
        \left \Vert \left [ \frac{f(\xi,\tau)}{\lb \tau \pm \xi^{2}\rb^{\frac{1}{2}+}}\right ]^{\vee}\right \Vert_{L^{6}_{x,t}} \lesssim \Vert f \Vert_{L^{2}_{\xi, \tau}} \quad \text {and} \quad \left \Vert \left [ \frac{\lb \xi \rb^{\frac{2}{3}}f(\xi,\tau)}{\lb \tau - \xi^{3}\rb^{\frac{1}{2}+}}\right ]^{\vee}\right \Vert_{L^{6}_{x,t}} \lesssim \Vert f \Vert_{L^{2}_{\xi, \tau}},
\end{align*}
\cite{KPV, T} with the Plancherel identity, we obtain
\begin{align} \label{eq: Strichartz interpolation}
        \left \Vert \left [ \frac{f(\xi,\tau)}{\lb \tau \pm \xi^{2}\rb^{\frac{3}{8}+}}\right ]^{\vee}\right \Vert_{L^{4}_{x,t}} \lesssim \Vert f \Vert_{L^{2}_{\xi, \tau}} \quad \text {and} \quad \left \Vert \left [ \frac{\lb \xi \rb^{\frac{1}{8}}f(\xi,\tau)}{\lb \tau - \xi^{3}\rb^{\frac{3}{8}+}}\right ]^{\vee}\right \Vert_{L^{4}_{x,t}} \lesssim \Vert f \Vert_{L^{2}_{\xi, \tau}}.
\end{align}
Therefore, we have
\begin{multline} \label{eq: multiplier 1 A}
    \left | \iint_{\begin{subarray}{l}\\ \xi=\xi_{1}+\xi_{2}\\ \tau=\tau_{1}+\tau_{2} \end 
{subarray}} h(\xi, \tau) 
\frac{f(\xi_1, \tau_1)}{\lb \tau_1 \pm \xi_1^2 \rb^{\frac38+}}\frac{g(\xi_{2}, \tau_{2})}{\lb \tau_2 - \xi_2^3 \rb^{\frac38 +}}  \right |  \\
\leq \| h \|_{L^2} \left \Vert \left [ \frac{f(\xi,\tau)}{\lb \tau + \xi^{2}\rb^{\frac38+}}\right ]^{\vee}\right \Vert_{L^{4}_{x,t}} \left \Vert \left [ \frac{g(\xi,\tau)}{\lb \tau - \xi^{3}\rb^{\frac38+}}\right ]^{\vee}\right \Vert_{L^{4}_{x,t}} \lesssim \| f\|_{L^2} \|g\|_{L^2} \|h\|_{L^2}.
\end{multline}
and similarly
\begin{align} \label{eq: multiplier 1 B}
    \left | \iint_{\begin{subarray}{l}\\ \xi=\xi_{1}+\xi_{2}\\ \tau=\tau_{1}+\tau_{2} \end 
{subarray}} h(\xi, \tau) 
\frac{f(\xi_1, \tau_1)}{\lb \tau_1 + \xi_1^2 \rb^{\frac38+}}\frac{g(\xi_{2}, \tau_{2})}{\lb \tau_2 \pm \xi_2^2 \rb^{\frac38 +}}  \right | \lesssim \| f\|_{L^2} \|g\|_{L^2} \|h\|_{L^2}.
\end{align}
It is proved in \cite[Lemma 2.2]{Wu} that
\begin{align}\label{eq: multiplier m A}
    \left | \iint_{\begin{subarray}{l}\\ \xi=\xi_{1}+\xi_{2}\\ \tau=\tau_{1}+\tau_{2} \end 
{subarray}} |3\xi_2^2\pm2\xi_1 |^{\frac{1}{2}}h(\xi, \tau) 
\frac{f(\xi_1, \tau_1)}{\lb \tau_1 \pm \xi_1^2 \rb^{\frac12 +}}\frac{g(\xi_{2}, \tau_{2})}{\lb \tau_2 - \xi_2^3 \rb^{\frac12 +}}  \right | \lesssim \| f\|_{L^2} \|g\|_{L^2} \|h\|_{L^2}
\end{align}
and 
\begin{align} \label{eq: multiplier m B}
    \left | \iint_{\begin{subarray}{l}\\ \xi=\xi_{1}+\xi_{2}\\ \tau=\tau_{1}+\tau_{2} \end 
{subarray}} | \xi|^{\frac{1}{2}}h(\xi, \tau) 
\frac{f(\xi_1, \tau_1)}{\lb \tau_1 + \xi_1^2 \rb^{\frac12+}}\frac{g(\xi_{2}, \tau_{2})}{\lb \tau_2 \pm \xi_2^2 \rb^{\frac12 +}}  \right | \lesssim \| f\|_{L^2} \|g\|_{L^2} \|h\|_{L^2}.
\end{align}
Interpolating trilinear functionals (\ref{eq: multiplier 1 A}) and (\ref{eq: multiplier m A}), we obtain
\begin{align} \label{eq: multiplier m C}
    \left | \iint_{\begin{subarray}{l}\\ \xi=\xi_{1}+\xi_{2}\\ \tau=\tau_{1}+\tau_{2} \end 
{subarray}} | 3\xi_2^2\pm2\xi_1 |^{\frac{1}{2}-}h(\xi, \tau) 
\frac{f(\xi_1, \tau_1)}{\lb \tau_1 \pm \xi_1^2 \rb^{\frac12 -}}\frac{g(\xi_{2}, \tau_{2})}{\lb \tau_2 - \xi_2^3 \rb^{\frac12 -}}  \right | \lesssim \| f\|_{L^2} \|g\|_{L^2} \|h\|_{L^2},
\end{align}
see \cite{GM}. Using (\ref{eq: multiplier 1 A}) and the triangle inequality, we can replace the multiplier $| 3\xi_2^2\pm2\xi_1 |^{\frac{1}{2}-}$ in (\ref{eq: multiplier m C}) with $\lb 3\xi_2^2\pm2\xi_1 \rb^{\frac{1}{2}-}$.
Using (\ref{eq: multiplier 1 B}) and (\ref{eq: multiplier m B}), we similarly obtain
\begin{align*}
    \left | \iint_{\begin{subarray}{l}\\ \xi=\xi_{1}+\xi_{2}\\ \tau=\tau_{1}+\tau_{2} \end 
{subarray}} \lb \xi\rb^{\frac{1}{2}-}h(\xi, \tau) 
\frac{f(\xi_1, \tau_1)}{\lb \tau_1 + \xi_1^2 \rb^{\frac12 -}}\frac{g(\xi_{2}, \tau_{2})}{\lb \tau_2 - \xi_2^2 \rb^{\frac12 -}}  \right | \lesssim \| f\|_{L^2} \|g\|_{L^2} \|h\|_{L^2}.
\end{align*}
\end{proof}

In the following proofs we always assume that $b<\frac{1}{2}$ is sufficiently close to $\frac{1}{2}$.

\subsection{Proof of Proposition \ref{estimate:uv}} \label{proof: uv}

By duality, it suffices to bound
\begin{align} \label{eq: uv dual}
\iint_{\begin{subarray}{l}\\ \xi=\xi_{1}+\xi_{2}\\ \tau=\tau_{1}+\tau_{2} \end
{subarray}} \frac{\lb \xi \rb^{s+a}|h(\xi, \tau)|}{\lb \tau + \xi^2 \rb^b}\frac{|f(\xi_1, \tau_1)|}{\lb \xi_{1} \rb^{s}\lb \tau_1 + \xi_1^2 \rb^b}\frac{|g(\xi_{2}, \tau_{2})|}{\lb \xi_{2} \rb^{k}\lb \tau_2 - \xi_2^3 \rb^b}    
\end{align}
by $\| f\|_{L^2} \|g\|_{L^2} \|h\|_{L^2}$.
Applying Cauchy-Schwarz inequality, we can bound (\ref{eq: uv dual}) by
\begin{align*}
        & \int_{\R^4} \frac{\lb \xi \rb^{s+a}|h(\xi, \tau)|}{\lb \tau + \xi^2 \rb^b}\frac{|f(\xi-\xi_2, \tau-\tau_2)|}{\lb \xi-\xi_2 \rb^{s}\lb (\tau-\tau_2) + (\xi-\xi_2)^2 \rb^b}\frac{|g(\xi_{2}, \tau_{2})|}{\lb \xi_{2} \rb^{k}\lb \tau_2 - \xi_2^3 \rb^b} d\xi d\xi_2 d\tau d\tau_2 \\
        & \leq  \left (\int_{\R^4} |f(\xi-\xi_2, \tau-\tau_2)|^2 |g(\xi_{2}, \tau_{2})|^2 d\xi d\xi_2 d\tau d\tau_2\right)^{\frac12} \\
        & \qquad \times \left (\int_{\R^4} \frac{\lb \xi \rb^{2s+2a}|h(\xi, \tau)|^2}{\lb \tau + \xi^2 \rb^{2b}\lb \xi-\xi_2 \rb^{2s}\lb (\tau-\tau_2) + (\xi-\xi_2)^2 \rb^{2b}\lb \xi_{2} \rb^{2k}\lb \tau_2 - \xi_2^3 \rb^{2b}} d\xi d\xi_2 d\tau d\tau_2\right )^{\frac12}  \\
        & \lesssim \| f\|_{L^2} \|g\|_{L^2} \left (\int_{\R^3} \frac{\lb \xi \rb^{2s+2a}|h(\xi, \tau)|^2}{\lb \tau + \xi^2 \rb^{2b}\lb \xi-\xi_2 \rb^{2s} \lb \xi_{2} \rb^{2k}\lb \tau+ (\xi-\xi_2)^2- \xi_2^3 \rb^{4b-1}} d\xi d\xi_2 d\tau \right )^{\frac12} \\
        & \lesssim \| f\|_{L^2} \|g\|_{L^2} \left (\int_{\R^3} \frac{\lb \xi \rb^{2s+2a}|h(\xi, \tau)|^2}{\lb \xi-\xi_2 \rb^{2s} \lb \xi_{2} \rb^{2k}\lb \xi_2 ( \xi_{2}^2-\xi_2+2\xi)\rb^{4b-1}} d\xi d\xi_2 d\tau \right )^{\frac12} \\
        & \leq \left ( \sup_{\xi} \int \frac{\lb \xi \rb^{2s+2a}}{\lb \xi-\xi_2 \rb^{2s} \lb \xi_{2} \rb^{2k}\lb \xi_2 ( \xi_{2}^2-\xi_2+2\xi)\rb^{4b-1}} d\xi_2 \right )^{\frac12} \| f\|_{L^2} \|g\|_{L^2} \|h\|_{L^2}.
\end{align*}
We used Lemma \ref{CalcLem} in $\tau_2$ integration in the second inequality, and 
\begin{multline*}
    \lb \tau + \xi^2 \rb^{2b}\lb \tau+ (\xi-\xi_2)^2- \xi_2^3 \rb^{4b-1} \geq \lb \tau + \xi^2 \rb^{4b-1}\lb \tau+ (\xi-\xi_2)^2- \xi_2^3 \rb^{4b-1} \\ \gtrsim \lb (\tau + \xi^2 )-(\tau+ (\xi-\xi_2)^2- \xi_2^3) \rb^{4b-1} = \lb \xi_2 ( \xi_{2}^2-\xi_2+2\xi)\rb^{4b-1}
\end{multline*}
in the third inequality. Therefore, it suffices to show that the quantity
\begin{align} \label{eq: uv reduced}
    \sup_{\xi} \int \frac{\lb \xi \rb^{2s+2a}}{\lb \xi-\xi_2 \rb^{2s} \lb \xi_{2} \rb^{2k}\lb \xi_2 ( \xi_{2}^2-\xi_2+2\xi)\rb^{4b-1}} d\xi_2 
\end{align}
is finite. 

\noindent \textit{Case 1: $|\xi_{2}| \lesssim 1$.}

Note that, in this case
\begin{align*}
    \lb\xi_{2}(\xi_{2}^{2}-\xi_{2}+2\xi)\rb \gtrsim \lb \xi_{2} \xi \rb \gtrsim |\xi_{2}|\lb \xi \rb.
\end{align*}
Using this, we estimate (\ref{eq: uv reduced}) by
\begin{align*}
        \sup_{\xi} \int_{|\xi_{1}| \lesssim 1} \frac{\lb \xi \rb^{2s+2a}d\xi_{1}}{\lb \xi \rb^{2s+4b-1}|\xi_{1}|^{4b-1}} \lesssim \sup_{\xi} \frac{1}{\lb \xi \rb^{4b-1-2a}}.
\end{align*}

\noindent \textit{Case 2: $|\xi| \lesssim 1$.}

Since $\lb\xi_{2}(\xi_{2}^{2}-\xi_{2}+2\xi)\rb \gtrsim \lb \xi_{2} \rb^{3}$ and $\lb\xi_{2}-\xi \rb \approx \lb\xi_{2}\rb$, we have
\begin{align*}
        (\ref{eq: uv reduced}) &\lesssim \sup_{\xi} \int \frac{d\xi_{2}}{\lb \xi_{2} \rb^{2s+2k+3(4b-1)}}.
\end{align*}

In the rest of this proof we shall assume that $|\xi| \gg 1$ and $|\xi_2| \gg 1$.

\noindent \textit{Case 3: $\xi \leq -1$.}

Let $M=\max(|\tau + \xi^2|,|\tau_1 + \xi_1^2|,|\tau_2 - \xi_2^3|)$. Then 
\begin{align*}
    M \gtrsim |(\tau + \xi^2)-(\tau_1 + \xi_1^2)-(\tau_2 - \xi_2^3)|=|\xi_2(\xi_{2}^{2}-\xi_{2}+2\xi)|.
\end{align*}
Since we are assuming $\xi \leq 1$, we have $|\xi_{2}^{2}-\xi_{2}+2\xi|=|(\xi_2-\frac12)^{2}+2\xi-\frac14|\gtrsim \max(|\xi_2|^2,|\xi|)$. Therefore, $M \gtrsim |\xi_2|^{3}$ and also $M \gtrsim |\xi_2||\xi_{2}^{2}-\xi_{2}+2\xi|^{1-\frac{a}{b}}|\xi_{2}^{2}-\xi_{2}+2\xi|^{\frac{a}{b}} \gtrsim |\xi_2|^{3-2\frac{a}{b}}|\xi|^{\frac{a}{b}}$.
Since $|\xi| \gg 1$ and $|\xi_2| \gg 1$, we have $\lb M \rb \gtrsim \lb \xi_2 \rb^{3-2\frac{a}{b}}\lb \xi \rb^{\frac{a}{b}}$. 
Let 
\begin{align*}
    P(\xi,\tau,\xi_1,\tau_1,\xi_2,\tau_2):=\frac{\lb \xi \rb^{s+a}|h(\xi, \tau)|}{\lb \tau + \xi^2 \rb^b}\frac{|f(\xi_1, \tau_1)|}{\lb \xi_{1} \rb^{s}\lb \tau_1 + \xi_1^2 \rb^b}\frac{|g(\xi_{2}, \tau_{2})|}{\lb \xi_{2} \rb^{k}\lb \tau_2 - \xi_2^3 \rb^b}.
\end{align*}
We separately consider the case $s \geq 0$ and the case $s < 0$. Our main tool is Lemma \ref{lem: interpolation}.

\textit{Case 3A: $s \geq 0$.}

(i) $M=|\tau + \xi^2|$. In this case,
\begin{align*}
      P(\xi,\tau,\xi_1,\tau_1,\xi_2,\tau_2) 
      \lesssim \frac{\lb \xi \rb^{s}}{\lb \xi_1 \rb^{s}\lb \xi_2 \rb^{k+3b-2a}}|h(\xi, \tau)|\frac{|f(\xi_1, \tau_1)|}{\lb \tau_1 + \xi_1^2 \rb^b}\frac{|g(\xi_{2}, \tau_{2})|}{\lb \tau_2 - \xi_2^3 \rb^b}. 
\end{align*}
If $|\xi_1| \lesssim |\xi_2|$, then $\lb 3\xi_2^2+2\xi_1 \rb \approx \lb \xi_2\rb^2$. Using $\lb \xi \rb^{s} \lesssim \lb \xi_1 \rb^{s}\lb \xi_2 \rb^{s}$,
\begin{align*}
    P(\xi,\tau,\xi_1,\tau_1,\xi_2,\tau_2) 
    & \lesssim \lb \xi_2 \rb^{s-k-3b+2a}|h(\xi, \tau)|\frac{|f(\xi_1, \tau_1)|}{\lb \tau_1 + \xi_1^2 \rb^b}\frac{|g(\xi_{2}, \tau_{2})|}{\lb \tau_2 - \xi_2^3 \rb^b} \\
    & \lesssim \lb 3\xi_2^2+2\xi_1 \rb^{\frac{1}{2}(s-k-3b+2a)}|h(\xi, \tau)|\frac{|f(\xi_1, \tau_1)|}{\lb \tau_1 + \xi_1^2 \rb^b}\frac{|g(\xi_{2}, \tau_{2})|}{\lb \tau_2 - \xi_2^3 \rb^b}.
\end{align*}
If $|\xi_1| \gg |\xi_2|$, then $|\xi_1| \approx |\xi|$ and
\begin{align*}
     P(\xi,\tau,\xi_1,\tau_1,\xi_2,\tau_2) \lesssim |h(\xi, \tau)|\frac{|f(\xi_1, \tau_1)|}{\lb \tau_1 + \xi_1^2 \rb^b}\frac{|g(\xi_{2}, \tau_{2})|}{\lb \tau_2 - \xi_2^3 \rb^b}.
\end{align*}
Comparing these with Lemma \ref{lem: interpolation}, the desired estimate follows.

(ii) $M=|\tau_1 + \xi_1^2|$. This case is similar to (i). We have
\begin{align*}
      P(\xi,\tau,\xi_1,\tau_1,\xi_2,\tau_2) 
      \lesssim \frac{\lb \xi \rb^{s}}{\lb \xi_1 \rb^{s}\lb \xi_2 \rb^{k+3b-2a}}|f(\xi_1, \tau_1)|\frac{|h(\xi, \tau)|}{\lb \tau + \xi^2 \rb^b}\frac{|g(\xi_{2}, \tau_{2})|}{\lb \tau_2 - \xi_2^3 \rb^b}. 
\end{align*}
If $|\xi| \lesssim |\xi_2|$, then use $\lb 3\xi_2^2+2\xi \rb \approx \lb \xi_2\rb^2$ and $\lb \xi \rb^{s} \lesssim \lb \xi_1 \rb^{s}\lb \xi_2 \rb^{s}$. If $|\xi| \gg |\xi_2|$, then $|\xi_1| \approx |\xi|$, thus the multiplier is bounded.

(iii) $M=|\tau_2 + \xi_2^3|$. We have
\begin{align*}
      P(\xi,\tau,\xi_1,\tau_1,\xi_2,\tau_2) 
      \lesssim \frac{\lb \xi \rb^{s}}{\lb \xi_1 \rb^{s}\lb \xi_2 \rb^{k+3b-2a}}|g(\xi_{2}, \tau_{2})|\frac{|h(\xi, \tau)|}{\lb \tau + \xi^2 \rb^b}\frac{|f(\xi_1, \tau_1)|}{\lb \tau_1 + \xi_1^2 \rb^b}. 
\end{align*}
Since $\lb \xi \rb^s \lesssim \lb \xi_1 \rb^s \lb \xi_2 \rb^s$, 
\begin{align*}
      P(\xi,\tau,\xi_1,\tau_1,\xi_2,\tau_2) 
      \lesssim \lb \xi_2 \rb^{s-k-3b+2a}|g(\xi_{2}, \tau_{2})|\frac{|h(\xi, \tau)|}{\lb \tau + \xi^2 \rb^b}\frac{|f(\xi_1, \tau_1)|}{\lb \tau_1 + \xi_1^2 \rb^b}. 
\end{align*}

\textit{Case 3B: $s < 0$.}

Using $\lb \xi_1 \rb^{-s} \lesssim \lb \xi \rb^{-s} \lb \xi_2 \rb^{-s}$, we have
\begin{align*}
    \frac{\lb \xi \rb^{s}}{\lb \xi_1 \rb^{s}\lb \xi_2 \rb^{k+3b-2a}} \lesssim \lb \xi_2 \rb^{-s-k-3b+2a} \lesssim 1
\end{align*}
provided $-s-k-3b+2a \leq 0$.

\noindent \textit{Case 4: $\xi<-1$ and $|\xi_{2}| \geq 1+\sqrt{1-8\xi}$.}

In this case, we have 
\begin{align*}
    \left|\xi_{2}-\frac{1 \pm \sqrt{1-8\xi}}{2}\right| \geq |\xi_{2}|-\frac{1+ \sqrt{1-8\xi}}{2} \geq \frac{|\xi_{2}|}{2}.
\end{align*}
Since we are assuming $\xi<-1$ and $|\xi_{2}| \geq 1+\sqrt{1-8\xi}$ and $|\xi| \gg 1$ and $|\xi_2| \gg 1$, 
\begin{align*}
     \left|\xi_{2}-\frac{1 \pm \sqrt{1-8\xi}}{2}\right| 
    \gtrsim \max(| \xi_{2} |,| \xi |^{\frac{1}{2}}).
\end{align*}
Therefore, 
\begin{align*}
| \xi_{2}^{2}-\xi_{2}+2\xi | = \left |  \left(\xi_{2}-\frac{1 + \sqrt{1-8\xi}}{2}\right) \left(\xi_{2}-\frac{1 - \sqrt{1-8\xi}}{2}\right) \right | \gtrsim \max(|\xi_2|^2,|\xi|).
\end{align*}
Using this, we can estimate as in Case 3.

\noindent \textit{Case 5: $\xi<-1$ and $|\xi_{2}| \leq \frac{\sqrt{1-8\xi}-1}{4}$.}

In this case, we have 
\begin{align*}
    \left|\xi_{2}-\frac{1 \pm \sqrt{1-8\xi}}{2}\right| 
    \geq \frac{1+\sqrt{1-8\xi}}{2}-|\xi_{2}|
    \gtrsim \max(| \xi_{2} |,| \xi |^{\frac{1}{2}}).
\end{align*}
which is the same as in Case 4.

\noindent \textit{Case 6: $\xi<-1$ and $|\xi_{2}-\frac{1+\sqrt{1-8\xi}}{2}| \leq 1$.}

We have 
\begin{align*}
        (\ref{eq: uv reduced}) &\lesssim \sup_{\xi} \int\frac{\lb \xi \rb^{2s+2a} d\xi_{2}}{\lb \xi -\xi_{2} \rb^{2s}\lb \xi_{2} \rb^{2k}\lb \xi_{2}(\xi_{2}-\frac{1 + \sqrt{1-8\xi}}{2}) \rb^{4b-1}| \xi_{2}-\frac{1 - \sqrt{1-8\xi}}{2} |^{4b-1}} \\
        &\lesssim \sup_{\xi} \frac{\lb \xi \rb^{2s+2a}}{\lb \xi \rb^{2s+k+4b-1}} \int_{|\xi_{2}-\frac{1+\sqrt{1-8\xi}}{2}| \leq 1} \frac{d\xi_{2}}{|\xi_{2}-\frac{1+\sqrt{1-8\xi}}{2}|^{4b-1}}\\
        &< \infty.
\end{align*}

\noindent \textit{Case 7: $\xi<-1$ and $|\xi_{2}-\frac{1-\sqrt{1-8\xi}}{2}| \leq 1$.}

Similar to Case 6. 

\noindent \textit{Case 8: $\xi<-1$, $|\xi_{2}-\frac{1\pm \sqrt{1-8\xi}}{2}| \geq 1$ and $\frac{\sqrt{1-8\xi}-1}{4} \leq |\xi_{2}| \leq 1+\sqrt{1-8\xi}$.}

Since $|\xi_{2}| \approx |\xi|^{\frac{1}{2}}$, we have 
\begin{align*}
        (\ref{eq: uv reduced}) &\lesssim \sup_{\xi} \int \frac{\lb \xi \rb^{2s+2a}d\xi_{2}}{\lb \xi - \xi_{2} \rb^{2s}\lb \xi_{2} \rb^{2k}\lb \xi_{2}-\frac{1+\sqrt{1-8\xi}}{2} \rb^{4b-1}\lb \xi_{2} \rb^{4b-1}\lb \xi_{2}-\frac{1-\sqrt{1-8\xi}}{2} \rb^{4b-1}} \\ & \lesssim \sup_{\xi} \frac{1}{\lb \xi \rb ^{k-2a+\frac{1}{2}(4b-1)}}\int \frac{d\xi_{2}}{\lb \xi_{2}-\frac{1 + \sqrt{1-8\xi}}{2} \rb^{4b-1}\lb \xi_{2}-\frac{1 - \sqrt{1-8\xi}}{2} \rb^{4b-1}} \\
        & \lesssim \sup_{\xi} \frac{1}{\lb \xi \rb ^{k-2a+\frac{3}{2}(4b-1)-\frac{1}{2}}}.  
\end{align*}

\subsection{Proof of Proposition \ref{estimate:vv_x}}

For the $s>0$ case, see \cite{ET}. If $s=0$, it suffices to bound the quantity
    \begin{align}
        \iint_{\begin{subarray}{l}\\ \xi=\xi_{1}+\xi_{2}+\xi_{3}\\ \tau=\tau_{1}+\tau_{2}+\tau_{3} \end
{subarray}}\frac{h(\xi,\tau)}{\lb \tau + \xi^{2}\rb^{b}}\frac{f(\xi_{1},\tau_{1})}{\lb \tau_1 + \xi_1^{2}\rb^{b}}\frac{g(\xi_{2},\tau_{2})}{\lb \tau_2 - \xi_2^{2}\rb^{b}}\frac{k(\xi_{3},\tau_{3})}{\lb \tau_3 + \xi_3^{2}\rb^{b}}  \label{eq:kdvdual}
    \end{align}
    by $\Vert f \Vert_{L^{2}_{\xi,\tau}}\Vert g \Vert_{L^{2}_{\xi,\tau}}\Vert h \Vert_{L^{2}_{\xi,\tau}}\Vert k \Vert_{L^{2}_{\xi,\tau}}$.
Assuming $b>\frac38$, this immediately follows from (\ref{eq: Strichartz interpolation}) and the H\"older inequality.

\subsection{Proof of Proposition \ref{estimate:|u|^2_x}} \label{proof: |u|^2_x}

By duality, it is enough to bound the quantity
\begin{align}
         \iint_{\begin{subarray}{l}\\ \xi=\xi_{1}+\xi_{2}\\ \tau=\tau_{1}+\tau_{2} \end{subarray}} \frac{\lb \xi \rb^{k+a}|\xi||h(\xi,\tau)|}{\lb \tau- \xi^{3} \rb^{b}}\frac{|f(\xi_1,\tau_1)|}{\lb \xi_{1} \rb^{s}\lb \tau_{1}+\xi_{1}^{2} \rb^{b}}\frac{|g(\xi_{2},\tau_{2})|}{\lb \xi_{2}\rb^{s}\lb \tau_{2}- \xi_{2}^{2}\rb^{b}}
        \label{kdv_duality}
\end{align}
by $\left \Vert f \right \Vert_{L^{2}}\left \Vert g \right \Vert_{L^{2}}\left \Vert h \right \Vert_{L^{2}}$. Arguing as in the proof of Proposition \ref{estimate:uv}, it suffices to demonstrate that the quantity
\begin{align}
        \sup_{\xi_{1}}\frac{1}{\lb \xi_{1} \rb^{2s}}\int\frac{\lb \xi \rb^{2k+2a+2} d\xi}{\lb \xi -\xi_{1} \rb^{2s}\lb\xi(\xi^{2}+\xi-2\xi_{1})\rb^{4b-1}} \label{kdv_reduced}
\end{align}
is finite. 

\noindent \textit{Case 1: $|\xi| \lesssim 1$.}

In this case, we have $\lb\xi(\xi^{2}+\xi-2\xi_{1})\rb \gtrsim \lb \xi \xi_1 \rb \gtrsim |\xi|\lb \xi_1 \rb$ . Thus,
\begin{align*}
    (\ref{kdv_reduced}) \lesssim \sup_{\xi_{1}} \frac{1}{\lb \xi_{1} \rb^{4s+4b-1}}\int_{|\xi| \lesssim 1} \frac{1}{|\xi|^{4b-1}} d\xi.
\end{align*}

In the rest of this proof we always assume that $|\xi| \gg 1$. 

\noindent \textit{Case 2: $\xi_{1} < 1$.}

Let $M=\max(|\tau_{1}- \xi_{1}^{2}|,|\tau_{2}+ \xi_{2}^{2}|,|\tau- \xi^{3}|)$. Then $M \gtrsim |\xi(\xi^{2}+\xi-2\xi_{1})|$. Since $\xi_{1} < 1$ and $|\xi| \gg 1$, we have $\xi^{2}+\xi-2\xi_{1}=(\xi+\frac{1}{2})^{2}-2\xi_{1}-\frac{1}{4} \gtrsim |\xi|^{2}$, therefore $M \gtrsim |\xi|^{3}$. Let
\begin{align*}
    P(\xi,\tau,\xi_1,\tau_1,\xi_2,\tau_2):=\frac{\lb \xi \rb^{k+a}|\xi||h(\xi,\tau)|}{\lb \tau- \xi^{3} \rb^{b}}\frac{|f(\xi_1,\tau_1)|}{\lb \xi_{1} \rb^{s}\lb \tau_{1}+\xi_{1}^{2} \rb^{b}}\frac{|g(\xi_{2},\tau_{2})|}{\lb \xi_{2}\rb^{s}\lb \tau_{2}- \xi_{2}^{2}\rb^{b}}.
\end{align*}
In the following, we will frequently refer to Lemma \ref{lem: interpolation}.

\noindent \textit{Case 2A: $s \geq 0$}

(i) $M=|\tau- \xi^{3}|$. Using $\lb \xi_1 \rb^s \lb \xi_2 \rb^s \gtrsim \lb \xi \rb^s$ we have
\begin{align*}
    P(\xi,\tau,\xi_1,\tau_1,\xi_2,\tau_2)
     & \lesssim \frac{\lb \xi \rb^{k+a+1-3b}}{\lb \xi_1 \rb^s \lb \xi_2 \rb^s}|h(\xi,\tau)| \frac{|f(\xi_1,\tau_1)|}{\lb \tau_1+\xi_1^{2} \rb^{b}}\frac{|g(\xi_2,\tau_2)|}{\lb \tau_2- \xi_2^{2} \rb^{b}} \\
    & \lesssim \lb \xi \rb^{k-s+a+1-3b}|h(\xi,\tau)| \frac{|f(\xi_1,\tau_1)|}{\lb \tau_1+\xi_1^{2} \rb^{b}}\frac{|g(\xi_2,\tau_2)|}{\lb \tau_2- \xi_2^{2} \rb^{b}}.
\end{align*}

(ii) $M=|\tau_1 +\xi_1^2|$. In this case,
\begin{align*}
    P(\xi,\tau,\xi_1,\tau_1,\xi_2,\tau_2)
     & \lesssim \frac{\lb \xi \rb^{k+a+1-3b}}{\lb \xi_1 \rb^s \lb \xi_2 \rb^s}|f(\xi_1,\tau_1)| \frac{|g(\xi_2,\tau_2)|}{\lb \tau_2- \xi_2^{2} \rb^{b}}\frac{|h(\xi,\tau)|}{\lb \tau-\xi^3 \rb^{b}}.
\end{align*}
If $|\xi_2| \lesssim |\xi|^{2-}$, using $\lb \xi_1 \rb^s \lb \xi_2 \rb^s \gtrsim \lb \xi \rb^s$ and $\lb 3\xi^2-2\xi_2 \rb \approx \lb \xi \rb^2$ we obtain
\begin{align*}
    P(\xi,\tau,\xi_1,\tau_1,\xi_2,\tau_2)
     & \lesssim \lb 3\xi^2-2\xi_1 \rb^{\frac12(k-s+a+1-3b)}|f(\xi_1,\tau_1)| \frac{|g(\xi_2,\tau_2)|}{\lb \tau_2- \xi_2^{2} \rb^{b}}\frac{|h(\xi,\tau)|}{\lb \tau-\xi^3 \rb^{b}}.
\end{align*}
If $|\xi_2| \gg |\xi|^{2-}$, then $|\xi_2| \approx |\xi_1|$ and
\begin{align*}
    \frac{\lb \xi \rb^{k+a+1-3b}}{\lb \xi_1 \rb^s \lb \xi_2 \rb^s} \approx\lb \xi \rb^{k+a+1-3b}\lb \xi_2 \rb^{-2s} \lesssim \lb \xi \rb^{k+a+1-3b}\lb \xi \rb^{-4s+}.
\end{align*}
Therefore, the multiplier is bounded provided $k-4s+a+1-3b<0$.

(iii) $M=|\tau_2 -\xi_2^2|$. This case is almost identical to (ii).

\textit{Case 2B: $s < 0$}

(i) $|\xi| \gtrsim |\xi_1|$. In this case, using $|\xi_2| \leq |\xi|+|\xi_1| \lesssim |\xi|$ we have
\begin{align*}
    \frac{\lb \xi \rb^{k+a+1-3b}}{\lb \xi_1 \rb^{s}\lb \xi_2 \rb^{s}} \lesssim \lb \xi \rb^{k-2s+a+1-3b}  \lesssim 1
\end{align*}
provided $k-2s+a+1-3b \leq 0$.

(ii) $|\xi| \ll |\xi_1|$. We look at \eqref{kdv_reduced}. For this case, the supremum there reduces to 
\begin{align*}
\sup_{\xi_1}\;\; \lb \xi_1 \rb^{-4s} \int_{|\xi| \ll |\xi_1|} \frac{\lb \xi \rb^{2k +2a + 3 - 4b} \d \xi}{\lb \xi^2 + \xi - 2\xi_1 \rb^{4b-1}} 
\lesssim 
\sup_{\xi_1}\;\; \lb \xi_1 \rb^{-4s} \int_{|\rho| \ll |\xi_1|^2} \frac{\lb \rho + 2 \xi_1 \rb^{k +a + 1 - 2b} \d \rho}{\lb \rho \rb^{4b-1}},
\end{align*} 
where we have changed variables by setting $\rho = \xi^2 + \xi - 2 \xi_1$, so that $\d \rho = (2 \xi + 1) \d \xi = \pm \sqrt{1 +4(\rho + 2 \xi_1)} \d \xi$. 

In the region where $|\rho + 2 \xi_1| \lesssim |\xi_1|$, as long as  $k + a > 2b - 2$, we arrive at 
\begin{align*}
\sup_{\xi_1}\;\; \lb \xi_1 \rb^{-4s + k + a + 3 - 6b}.
\end{align*} 

It remains to bound
\begin{multline*}
\sup_{\xi_1}\;\; \lb \xi_1 \rb^{-4s} \int_{|\xi_1| \ll |\rho| \ll |\xi_1|^2} \frac{\lb \rho + 2 \xi_1 \rb^{k +a + 1 - 2b} \d \rho}{\lb \rho \rb^{4b-1}} \\
\approx
\sup_{\xi_1}\;\; \lb \xi_1 \rb^{-4s} \int_{|\xi_1| \ll |\rho| \ll |\xi_1|^2}\lb \rho \rb^{k +a + 2 - 6b} \d \rho.
\end{multline*} 
This is finite as long as $k+ a+2-6b < -1$ and $a<4s - k + 6b - 3$.

\noindent \textit{Case 3: $\xi_{1} \geq 1$ and $|\xi| \geq 1+\sqrt{1+8\xi_{1}}$ or $|\xi| \leq \frac{\sqrt{1+8\xi_{1}}-1}{4}$.}

In this case $|\xi(\xi^{2}+\xi-2\xi_{1})| \gtrsim |\xi|^{3}$ and we can argue similarly to Case 2. 

\noindent \textit{Case 4: $\xi_{1} \geq 1$, $|\xi-\frac{-1-\sqrt{1+8\xi_{1}}}{2}| \leq 1$ and $\frac{\sqrt{1+8\xi_{1}}-1}{4} \leq |\xi| \leq 1+\sqrt{1+8\xi_{1}}$.}

In this case, we have
\begin{align*}
     \left|\xi (\xi-\frac{-1+\sqrt{1+8\xi_{1}}}{2})\right| \gtrsim |\xi||\xi_{1}|^{\frac{1}{2}} \gtrsim |\xi_{1}|.
\end{align*}
Using this we have
\begin{align*}
    ( \ref{kdv_reduced} )
        & \lesssim \sup_{\xi_{1}}\frac{1}{\lb \xi_{1} \rb^{4s-k-a-1}}\int_{|\xi-\frac{-1-\sqrt{1+8\xi_{1}}}{2}| \leq 1}\frac{d\xi}{\lb \xi (\xi-\frac{-1+\sqrt{1+8\xi_{1}}}{2}) \rb^{4b-1}|\xi-\frac{-1-\sqrt{1+8\xi_{1}}}{2}|^{4b-1}} \\
        & \lesssim \sup_{\xi_{1}}\frac{1}{\lb \xi_{1} \rb^{4s-k-a+4b-2}}\int_{|\xi-\frac{-1-\sqrt{1+8\xi_{1}}}{2}| \leq 1}\frac{d\xi}{|\xi-\frac{-1-\sqrt{1+8\xi_{1}}}{2}|^{4b-1}} \\ & \lesssim \sup_{\xi_{1}}\frac{1}{\lb \xi_{1} \rb^{4s-k-a+4b-2}}.
\end{align*}

\noindent \textit{Case 5: $\xi_{1} \geq 1$, $|\xi-\frac{-1+\sqrt{1+8\xi_{1}}}{2}| \leq 1$ and $\frac{\sqrt{1+8\xi_{1}}-1}{4} \leq |\xi| \leq 1+\sqrt{1+8\xi_{1}}$.}

Similar to Case 4.

\noindent \textit{Case 6: $\xi_{1} \geq 1$, $\left |\xi-\frac{-1\pm \sqrt{1+8\xi_{1}}}{2}\right| \geq 1$ and $\frac{\sqrt{1+8\xi_{1}}-1}{4} \leq |\xi| \leq 1+\sqrt{1+8\xi_{1}}$.}

Note that in this case we have $|\xi| \approx |\xi_{1}|^{\frac{1}{2}}$. 

First we consider the subcase $\left |\xi-\frac{-1+\sqrt{1+8\xi_{1}}}{2} \right| \ll |\xi|$. In this case, we have
\begin{multline*}
     \lb \xi \rb^{4b-1}=\lb \xi \rb^{8b-3-} \lb \xi \rb^{2-4b+} \gtrsim \lb \xi \rb^{8b-3-}\lb \xi-\frac{-1+\sqrt{1+8\xi_{1}}}{2} \rb^{2-4b+} \\
     \gtrsim \lb \xi_{1} \rb^{\frac{1}{2}(8b-3-)}\lb \xi-\frac{-1+\sqrt{1+8\xi_{1}}}{2} \rb^{2-4b+}.
\end{multline*}
Using this,
\begin{align*}
    ( \ref{kdv_reduced} )
        & \lesssim \sup_{\xi_{1}}\frac{1}{\lb \xi_{1} \rb^{4s-k-a-1}}\int\frac{d\xi}{\lb \xi \rb^{4b-1}\lb \xi-\frac{-1+ \sqrt{1+8\xi_{1}}}{2}\rb^{4b-1}\lb \xi-\frac{-1- \sqrt{1+8\xi_{1}}}{2}\rb^{4b-1}} \\
        & \lesssim \sup_{\xi_{1}}\frac{1}{\lb \xi_{1} \rb^{4s-k-a+4b-\frac{5}{2}-}}\int\frac{d\xi}{\lb \xi-\frac{-1+ \sqrt{1+8\xi_{1}}}{2}\rb^{1+}\lb \xi-\frac{-1- \sqrt{1+8\xi_{1}}}{2}\rb^{4b-1}} \\
        & \lesssim \sup_{\xi_{1}}\frac{1}{\lb \xi_{1} \rb^{4s-k-a+4b-\frac{5}{2}-}\lb \sqrt{1+8\xi_{1}} \rb^{4b-1}} \\
        & \lesssim \sup_{\xi_{1}}\frac{1}{\lb \xi_{1} \rb^{4s-k-a+6b-3-}}.
\end{align*}

Next consider the subcase $\left |\xi-\frac{-1+\sqrt{1+8\xi_{1}}}{2} \right| \gtrsim |\xi|$. Noting that
\begin{align*}
    \lb \xi \rb^{2(4b-1)}=\lb \xi \rb^{2(4b-1)-1-}\lb \xi \rb^{1+} \gtrsim \lb \xi_{1} \rb^{(4b-1)-\frac{1}{2}-}\lb \xi \rb^{1+},
\end{align*}
we have
\begin{align*}
    ( \ref{kdv_reduced} )
        & \lesssim \sup_{\xi_{1}}\frac{1}{\lb \xi_{1} \rb^{4s-k-a-1}}\int\frac{d\xi}{\lb \xi \rb^{4b-1}\lb \xi-\frac{-1+ \sqrt{1+8\xi_{1}}}{2}\rb^{4b-1}\lb \xi-\frac{-1- \sqrt{1+8\xi_{1}}}{2}\rb^{4b-1}} \\
        & \lesssim \sup_{\xi_{1}}\frac{1}{\lb \xi_{1} \rb^{4s-k-a-1}}\int\frac{d\xi}{\lb \xi\rb^{2(4b-1)}\lb \xi-\frac{-1- \sqrt{1+8\xi_{1}}}{2}\rb^{4b-1}} \\
        & \lesssim \sup_{\xi_{1}}\frac{1}{\lb \xi_{1} \rb^{4s-k-a-4b-\frac{5}{2}-}}\int\frac{d\xi}{\lb \xi\rb^{1+}\lb \xi-\frac{-1- \sqrt{1+8\xi_{1}}}{2}\rb^{4b-1}}\\
        & \lesssim \sup_{\xi_{1}}\frac{1}{\lb \xi_{1} \rb^{4s-k-a-4b-\frac{5}{2}-}\lb \frac{-1- \sqrt{1+8\xi_{1}}}{2}\rb^{4b-1}} \\
        & \lesssim \sup_{\xi_{1}}\frac{1}{\lb \xi_{1} \rb^{4s-k-a-6b-3-}}.
\end{align*}

\subsection{Proof of Proposition \ref{estimate:uv correction}} \label{proof: uv correction}

It is enough to show that
\begin{align}
\sup_{\tau} \iiint \frac{{\chi_{R}\lb \tau + \xi^{2} \rb}^{s+a-\frac{3}{2}}d\xi_{1} d\tau_{1} d\xi}{\lb \xi - \xi_{1} \rb^{2s}\lb \xi_{1} \rb^{2k}\lb (\tau-\tau_{1})+(\xi - \xi_{1})^{2} \rb^{2b}\lb \tau_{1} - \xi_{1}^{3} \rb^{2b}} < \infty.\label{nlscorrection_eq1} 
\end{align}
By Lemma \ref{CalcLem}, we have 
\begin{align}
(\ref{nlscorrection_eq1}) \lesssim \sup_{\tau} \iint \frac{\chi_{R}{\lb \tau + \xi^{2} \rb}^{s+a-\frac{3}{2}}d\xi_{1} d\xi}{\lb \xi - \xi_{1} \rb^{s}\lb \xi_{1} \rb^{2k}\lb \tau-\xi_{1}^{3}-(\xi - \xi_{1})^{2} \rb^{4b-1}}.\label{nlscorrection_eq2} 
\end{align}
Since $\chi_{R}{\lb \tau + \xi^{2} \rb}^{s+a-\frac{3}{2}} \approx \chi_{R}{\lb \tau \rb}^{s+a-\frac{3}{2}}$, 
\begin{align*}
        (\ref{nlscorrection_eq2}) 
        &\lesssim \sup_{\tau} \int \frac{\lb \tau \rb^{s+a-\frac{3}{2}}d\xi_{1}}{\lb \xi_{1} \rb^{2k}}\int \frac{d\xi}{\lb \xi - \xi_{1} \rb^{2s}\lb \tau-\xi_{1}^{3}-(\xi - \xi_{1})^{2} \rb^{4b-1}} \\
        &\lesssim 
            \sup_{\tau} \int \frac{\lb \tau \rb^{s+a-\frac{3}{2}}d\xi_{1}}{\lb \xi_{1} \rb^{2k}} \\
            & \qquad \times \left(\int_{|\rho| \leq 1}\frac{d\rho}{\lb \rho \rb^{s}|\rho|^{\frac{1}{2}}\lb \tau-\xi_{1}^{3}-\rho \rb^{4b-1}}+\int_{|\rho|>1}\frac{d\rho}{\lb \rho \rb^{s}|\rho|^{\frac{1}{2}}\lb \tau-\xi_{1}^{3}-\rho \rb^{4b-1}}\right)  \\
        &\lesssim \sup_{\tau} \int \frac{\lb \tau \rb^{s+a-\frac{3}{2}}d\xi_{1}}{\lb \xi_{1} \rb^{2k}} \left(\int_{|\rho| \leq 1}\frac{d\rho}{|\rho|^{\frac{1}{2}}\lb \tau-\xi_{1}^{3} \rb^{4b-1}}+\int\frac{d\rho}{\lb \rho \rb^{s+\frac{1}{2}}\lb \tau-\xi_{1}^{3}-\rho \rb^{4b-1}}\right) \\
        &\lesssim \sup_{\tau} \int \frac{\lb \tau \rb^{s+a-\frac{3}{2}}d\xi_{1}}{\lb \xi_{1} \rb^{2k}} \left(\frac{1}{\lb \tau-\xi_{1}^{3} \rb^{4b-1}}+\frac{1}{\lb \tau-\xi_{1}^{3} \rb^{4b-1}}\right) \\
        &\lesssim \sup_{\tau} \lb \tau \rb^{s+a-\frac{3}{2}} \left(\int_{|\eta|<1} \frac{d\eta}{\lb \eta \rb^{\frac{2}{3}k}|\eta|^{\frac{2}{3}}\lb \tau-\eta \rb^{4b-1}}+\int_{|\eta|>1} \frac{d\eta}{\lb \eta \rb^{\frac{2}{3}k}|\eta|^{\frac{2}{3}}\lb \tau-\eta \rb^{4b-1}}\right) \\
        &\lesssim \sup_{\tau} \lb \tau \rb^{s+a-\frac{3}{2}} \left(\int_{|\eta|<1} \frac{d\eta}{|\eta|^{\frac{2}{3}}\lb \tau \rb^{4b-1}}+\int_{-\infty}^{\infty} \frac{d\eta}{\lb \eta \rb^{\frac{2}{3}k+\frac{2}{3}}\lb \tau-\eta \rb^{4b-1}}\right) \\
        &\lesssim \sup_{\tau} \lb \tau \rb^{s+a-\frac{3}{2}} \left(\frac{1}{\lb \tau \rb^{4b-1}}+\frac{1}{\lb \tau \rb^{\min(\frac{2}{3}k+\frac{2}{3},4b-1)}}\phi_{\max(\frac{2}{3}k+\frac{2}{3},4b-1)}(\tau)\right),
\end{align*}
where $\phi$ is as in Lemma \ref{CalcLem}. The last term is finite as long as $\frac{1}{2}<s<\min(\frac{2}{3}k+\frac{13}{6},\frac{5}{2})$ and $0\leq a <\min(\frac{2}{3}k-s+\frac{13}{6},\frac{5}{2}-s)$.

\subsection{Proof of Proposition \ref{estimate:vv_x correction}} \label{proof: vv_x correction}

We assume that $\frac32 <k< \frac72$ and $0 \leq a < \frac72 -k$. The case $\frac12 < k< 2$ is demonstrated in \cite{CT}.

By invoking Cauchy-Schwarz and Young's inequalities, we see that it suffices to show that
\begin{align*}
\sup_{\tau} \iiint \frac{\chi_R(\xi,\tau)\lb \tau - \xi^3 \rb^{\frac{2}{3}(k+a-2)}|\xi|^{2}d \xi_1 d \xi d \tau_1 }{\lb\xi_1 \rb^{2k} \lb \xi-\xi_1 \rb^{2k}\lb \tau_1 - \xi_1^3 \rb ^{2b}\lb \tau - \tau_1 - (\xi-\xi_1)^3 \rb^{2b}} < \infty.
\end{align*}
Using $\chi_R(\xi,\tau)\lb \tau - \xi^3 \rb^{\frac{2}{3}(k+a-2)} \approx \chi_R(\xi,\tau)\lb \tau \rb^{\frac{2}{3}(k+a-2)}$ and then applying Lemma \ref{CalcLem}, we can bound the right-hand side by
\begin{align} \label{eq: kdv correction reduced}
\sup_{\tau} \iint \frac{\lb \tau \rb^{\frac{2}{3}(k+a-2)}|\xi|^{2}d \xi_1 d \xi }{\lb\xi_1 \rb^{2k} \lb \xi-\xi_1 \rb^{2k}\lb \tau_1 - \xi_1^3-(\xi-\xi_1)^{3}\rb ^{4b-1}}.
\end{align}
Let $\rho=(\xi-\xi_1)^3$. Since $|\xi|^2 \lesssim |\rho|^{\frac13}+|\xi_1|^2$, we have
\begin{align*}
        (\ref{eq: kdv correction reduced}) 
        & \lesssim \sup_{\tau}\int\frac{\lb \tau \rb^{\frac{2}{3}(k+a-2)}}{\lb\xi_1 \rb^{2k}}\int\frac{|\rho|^{\frac13}+|\xi_1|^2}{\lb \rho \rb^{\frac{3k}{2}}|\rho|^{\frac23}\lb \tau_1 - \xi_1^3-\rho\rb ^{4b-1}} \\
        & \lesssim \sup_{\tau} \int \frac{\lb \tau \rb^{\frac{2}{3}(k+a-2)}d \xi_1}{\lb\xi_1 \rb^{2k-2}\lb \tau -\xi_1^3 \rb^{4b-1}} \\
        & \lesssim \sup_{\tau}\int \frac{\lb \tau \rb^{\frac{2}{3}(k+a-2)}d \eta}{\lb \eta \rb^{\frac{2k-2}{3}}|\eta|^{\frac23}\lb \tau -\eta \rb^{4b-1}} \\
        & = \sup_{\tau}\left ( \int_{|\eta| \leq 1} \frac{\lb \tau \rb^{\frac{2}{3}(k+a-2)}d \eta}{\lb \eta \rb^{\frac{2k-2}{3}}|\eta|^{\frac23}\lb \tau -\eta \rb^{4b-1}}+\int_{|\eta| > 1} \frac{\lb \tau \rb^{\frac{2}{3}(k+a-2)}d \eta}{\lb \eta \rb^{\frac{2k-2}{3}}|\eta|^{\frac23}\lb \tau -\eta \rb^{4b-1}}\right ) \\
        & \lesssim \sup_{\tau}\lb \tau \rb^{\frac{2}{3}(k+a-2)}\left ( \frac{1}{\lb \tau \rb^{4b-1}}+\int\frac{d\eta}{\lb \eta \rb^{\frac{2k}{3}}\lb \tau - \eta \rb^{4b-1}}\right ) \\
        & \lesssim \sup_{\tau}\lb \tau \rb^{\frac{2}{3}(k+a-2)-4b+1} < \infty.
\end{align*}

\subsection{Proof of Proposition \ref{estimate:|u|^2_x correction}} \label{proof: |u|^2_x correction}

By duality, it suffices to bound 
\begin{align} \label{eq: |u|^2_x correction dual}
    \iint_{\begin{subarray}{l}\\ \xi=\xi_{1}+\xi_{2}\\ \tau=\tau_{1}+\tau_{2} \end{subarray}} \chi_{R}\lb \tau -\xi^{3} \rb^{\frac{k+a-2}{3}}|\xi| |h(\tau)| \frac{|f(\xi_{1},\tau_{1})|}{\lb \xi_{1}\rb^{s}\lb \tau_{1}+\xi_{1}^{2} \rb^{b}}\frac{|g(\xi_{2},\tau_{2})|}{\lb \xi_{2}\rb^{s}\lb \tau_{2}-\xi_{2}^{2} \rb^{b}}
\end{align}
by $\Vert f \Vert_{L^{2}_{\xi,\tau}}\Vert g \Vert_{L^{2}_{\xi,\tau}}\Vert h \Vert_{L^{2}_{\tau}}$.

\noindent\textit{Case 1: $s \geq 0$ and $k<2$.}

Since $\lb \xi \rb^s \lesssim \lb \xi_{1} \rb^s \lb \xi_{2} \rb^s $, we can bound (\ref{eq: |u|^2_x correction dual}) by
\begin{align*}
        & \iint_{\begin{subarray}{l}\\ \xi=\xi_{1}+\xi_{2}\\ \tau=\tau_{1}+\tau_{2} \end{subarray}} \chi_{R}\lb \tau -\xi^{3} \rb^{\frac{k+a-2}{3}}|\xi|\lb \xi \rb^{-s}|h(\tau)| \frac{\lb \xi_1 \rb^{s}|f(\xi_{1},\tau_{1})|}{\lb \tau_{1}+\xi_{1}^{2} \rb^{b}}\frac{|g(\xi_{2},\tau_{2})|}{\lb \xi_{2}\rb^{s}\lb \tau_{2}-\xi_{2}^{2} \rb^{b}} \\
        &= \iint_{\begin{subarray}{l}\\ \xi=\xi_{1}+\xi_{2}\\ \tau=\tau_{1}+\tau_{2} \end{subarray}} \lb \xi \rb^{\frac12 -}\left (\chi_{R}\lb \tau -\xi^{3} \rb^{\frac{k+a-2}{3}}|\xi|\lb \xi \rb^{-s-\frac12+}|h(\tau)| \right )\frac{|f(\xi_{1},\tau_{1})|}{\lb \tau_{1}+\xi_{1}^{2} \rb^{b}}\frac{|g(\xi_{2},\tau_{2})|}{\lb \tau_{2}-\xi_{2}^{2} \rb^{b}} \\
        & \lesssim \| \chi_{R}\lb \tau -\xi^{3} \rb^{\frac{k+a-2}{3}}|\xi|\lb \xi \rb^{-s-\frac12+}|h(\tau)|\|_{L^{2}_{\xi,\tau}}\Vert g \Vert_{L^{2}_{\xi,\tau}}\Vert h \Vert_{L^{2}_{\tau}} \\
        & \lesssim \sup_{\tau}\left( \lb \tau \rb^{\frac{k+a-2}{3}} \Vert \chi_{R}(\xi,\tau)|\xi|\lb \xi \rb^{-s-\frac12+} \Vert_{L^{2}_{\xi}}\right)\Vert f \Vert_{L^{2}_{\xi,\tau}}\Vert g \Vert_{L^{2}_{\xi,\tau}}\Vert h \Vert_{L^{2}_{\tau}}.
\end{align*}
Since
\begin{align*}
    \sup_{\tau}\left( \lb \tau \rb^{\frac{k+a-2}{3}} \Vert \chi_{R}(\xi,\tau)|\xi|\lb \xi \rb^{-s-\frac12+} \Vert_{L^{2}_{\xi}}\right) \lesssim \sup_{\tau} \lb \tau \rb^{\frac{k+a-2}{3}} \lb \tau \rb^{\frac{1}{3}\max (1-s,0)+},
\end{align*}
the desired bound follows provided $k+a<2$ and $k+a<s+1$.

\noindent\textit{Case 2: $\frac12 < s <\frac32$.}

It suffices to show that the quantity
\begin{align}
\sup_{\tau} \iiint \frac{{\chi_{R}\lb \tau + \xi^{2} \rb}^{\frac{2}{3}(k+a-2)}|\xi|^2d\xi_{1} d\tau_{1} d\xi}{\lb \xi - \xi_{1} \rb^{2s}\lb \xi_{1} \rb^{2s}\lb (\tau-\tau_{1})+(\xi - \xi_{1})^{2} \rb^{2b}\lb \tau_{1} - \xi_{1}^{3} \rb^{2b}} \label{kdvcorrection_eq1} 
\end{align}
is finite.

Using $\chi_{R}{\lb \tau - \xi^{3} \rb}^{\frac{2}{3}(k+a-2)} \approx \chi_{R}{\lb \tau \rb}^{\frac{2}{3}(k+a-2)}$ and then applying Lemma \ref{CalcLem}, we have 
\begin{align}
(\ref{kdvcorrection_eq1}) \lesssim \sup_{\tau} \iint \frac{{\lb \tau \rb}^{\frac{2}{3}(k+a-2)} |\xi|^2 d\xi_{1} d\xi}{\lb \xi - \xi_{1} \rb^{s}\lb \xi_{1} \rb^{2k}\lb \tau-\xi_{1}^{3}-(\xi - \xi_{1})^{2} \rb^{4b-1}}.\label{kdvcorrection_eq2} 
\end{align}
Let $\rho=(\xi-\xi_1)^2$. Since $|\xi|^2 \lesssim |\rho|+|\xi_1|^2$, 
\begin{align*}  
        (\ref{kdvcorrection_eq2}) 
        &\lesssim \sup_{\tau} \int \frac{\lb \tau \rb^{\frac{2}{3}(k+a-2)} d\xi_1}{\lb \xi_{1} \rb^{2s}}\int \frac{|\rho|+|\xi_1|^2 d\rho}{\lb \rho \rb^{s}|\rho|^{\frac{1}{2}}\lb \tau-\xi_{1}^{2}-\rho \rb^{4b-1}} \\
        &\lesssim 
            \sup_{\tau} \int \frac{\lb \tau \rb^{\frac{2}{3}(k+a-2)}d\xi_{1}}{\lb \xi_{1} \rb^{2s}} \Bigg(\int\frac{d\rho}{\lb \rho \rb^{s-\frac12}|\rho|^{\frac{1}{2}}\lb \tau-\xi_{1}^{2}-\rho \rb^{4b-1}} \\ 
            & \qquad +\int_{|\rho|\leq 1}\frac{d\rho}{\lb \rho \rb^{s}|\rho|^{\frac{1}{2}}\lb \tau-\xi_{1}^{2}-\rho \rb^{4b-1}}+\int_{|\rho|>1}\frac{d\rho}{\lb \rho \rb^{s+\frac12}\lb \tau-\xi_{1}^{2}-\rho \rb^{4b-1}}\Bigg) \\
        &\lesssim \sup_{\tau} \int \frac{\lb \tau \rb^{\frac{2}{3}(k+a-2)}d\xi_{1}}{\lb \xi_{1} \rb^{2s}} \left(\frac{1}{\lb \tau-\xi_1^2 \rb^{s+4b-\frac52}}+\frac{\lb \xi_1 \rb^2}{\lb \tau-\xi_{1}^{2}\rb^{4b-1}}\right) \\
        &\lesssim \sup_{\tau} \lb \tau \rb^{\frac{2}{3}(k+a-2)} \left ( \int \frac{d\eta}{\lb \eta \rb^{s}|\eta|^{\frac12}\lb \tau-\eta \rb^{s+4b-\frac52}}+\int \frac{\lb \eta \rb d\eta}{\lb \eta \rb^{s}|\eta|^{\frac12}\lb \tau-\eta \rb^{4b-1}} \right ) \\
        &\lesssim \sup_{\tau} \lb \tau \rb^{\frac{2}{3}(k+a-2)} 
            \Bigg ( \int_{|\eta| \leq 1} \frac{d\eta}{\lb \eta \rb^{s}|\eta|^{\frac12}\lb \tau-\eta \rb^{s+4b-\frac52}}+\int \frac{d\eta}{\lb \eta \rb^{s+\frac12}\lb \tau-\eta \rb^{s+4b-\frac52}} \\
            & \qquad +\int_{|\eta| \leq 1} \frac{d\eta}{\lb \eta \rb^{s-1}|\eta|^{\frac12}\lb \tau-\eta \rb^{4b-1}}+\int \frac{d\eta}{\lb \eta \rb^{s-\frac12}\lb \tau-\eta \rb^{4b-1}} \Bigg )\\
        &\lesssim \sup_{\tau} \lb \tau \rb^{\frac{2}{3}(k+a-2)} 
            \Bigg (\frac{1}{\lb \tau \rb^{s+4b-\frac52}} 
            +\frac{1}{\lb \tau \rb^{4b-1}}  \Bigg ) \\
        &\lesssim \sup_{\tau} \lb \tau \rb^{\frac{2}{3}(k+a-2)-s-4b+\frac52}. 
\end{align*}
The last term is finite as long as $k+a<\frac{3}{2}s+\frac54$. 

\noindent \textit{Case 3: $\frac32 \leq s <\frac52$.}

We can proceed as in Case 2. In this case, (\ref{kdvcorrection_eq2}) is finite provided $k+a<\frac72$.


\section{Appendix}

\begin{lem} {\cite[Lemma A.2]{ET}} \label{CalcLem} 
If $\beta \geq \gamma \geq 0$ and $\beta+\gamma>1$, then
\begin{align*}
  \int\frac{dx}{\lb x-a_1 \rb^{\beta}\lb x-a_2 \rb^{\gamma}} \lesssim \lb a_1-a_2 \rb^{-\gamma}\phi_{\beta}(a_1-a_2),
\end{align*}
where
\begin{align*}
\phi_{\beta}(a)=
  \begin{cases}
  1 & \beta>1 \\
  \log (1+\lb a \rb) & \beta =1 \\
  \lb a \rb^{1-\beta} & \beta <1.
  \end{cases}
\end{align*}
\end{lem}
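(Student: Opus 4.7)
The plan is to reduce to a single-parameter estimate by translation invariance: setting $a_2 = 0$ and $a := a_1 - a_2$, the claim becomes
\[
I(a) := \int \frac{dx}{\lb x - a \rb^{\beta} \lb x \rb^{\gamma}} \lesssim \lb a \rb^{-\gamma} \phi_{\beta}(a).
\]
First I would dispose of the easy range $|a| \lesssim 1$: in this region $\lb x - a \rb \approx \lb x \rb$, so $I(a) \lesssim \int \lb x \rb^{-(\beta+\gamma)}\, dx$, which converges since $\beta + \gamma > 1$, and the right-hand side $\lb a \rb^{-\gamma}\phi_\beta(a) \approx 1$. The rest of the argument will assume $|a| \gg 1$.

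For $|a| \gg 1$ the core step is a standard trichotomy: decompose $\R = A \cup B \cup C$ where
\[
A = \{ |x| \leq |a|/2 \}, \quad B = \{ |x-a| \leq |a|/2 \}, \quad C = \{ |x| \geq |a|/2, \; |x-a| \geq |a|/2 \}.
\]
On $A$ one has $\lb x - a \rb \approx \lb a \rb$, so $I_A \lesssim \lb a \rb^{-\beta} \int_{|x| \leq |a|/2} \lb x \rb^{-\gamma}\, dx$; the inner integral is $O(1)$, $O(\log \lb a \rb)$, or $O(\lb a \rb^{1-\gamma})$ according as $\gamma > 1$, $\gamma = 1$, or $\gamma < 1$. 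On $B$ one symmetrically uses $\lb x \rb \approx \lb a \rb$ and gets $I_B \lesssim \lb a \rb^{-\gamma} \cdot (O(1), O(\log\lb a\rb), O(\lb a \rb^{1-\beta}))$ according to the size of $\beta$. On $C$ I would split further into $\{|x| \approx |a|\}$ (of measure $\lesssim |a|$, integrand $\lesssim \lb a \rb^{-\beta-\gamma}$) and $\{|x| \gg |a|\}$ (where $\lb x - a \rb \approx \lb x \rb$, giving $\int_{|x| \gg |a|} \lb x \rb^{-(\beta+\gamma)}\, dx \lesssim \lb a \rb^{1-\beta-\gamma}$); both contribute $\lesssim \lb a \rb^{1-\beta-\gamma}$.

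Finally I would assemble the pieces using $\beta \geq \gamma$. If $\beta > 1$, the $B$-bound $\lb a \rb^{-\gamma}$ dominates, and $I_A \lesssim \lb a \rb^{-\beta} \leq \lb a \rb^{-\gamma}$ together with $I_C \lesssim \lb a \rb^{1-\beta-\gamma} \leq \lb a \rb^{-\gamma}$ (since $\beta > 1$) match $\phi_\beta \equiv 1$. If $\beta = 1$, the dominant contribution is the logarithmic $I_B$ term, giving $\lb a \rb^{-\gamma} \log\lb a \rb$. If $\beta < 1$, then also $\gamma \leq \beta < 1$, and every region contributes $\lb a \rb^{1-\beta-\gamma} = \lb a \rb^{-\gamma} \lb a \rb^{1-\beta}$, matching $\phi_\beta$.

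There is no real analytical obstacle here; this is a pure bookkeeping exercise in splitting the line according to the two peaks of the integrand. The only step requiring mild care is the boundary behavior on $C$ for small $\beta$, where I need to confirm that the surviving $\lb a \rb^{1-\beta-\gamma}$ term is truly dominated by $\lb a \rb^{-\gamma}\phi_\beta(a)$ in every sub-case — but this is immediate once $\beta \geq \gamma$ is used. Since the lemma is attributed to \cite[Lemma A.2]{ET}, I would in the write-up simply cite it; the sketch above is the standard justification.
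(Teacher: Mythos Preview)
Your sketch is correct and is the standard argument; the paper itself does not prove this lemma at all but simply states it with the citation to \cite[Lemma A.2]{ET}, exactly as you suggest doing in your final remark.
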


\end{document}